\newcommand{\N}{\mathbb{N}} 
\newcommand{\Z}{\mathbb{Z}}
\newcommand{\R}{\mathbb{R}} 
\newcommand{\C}{\mathbb{C}}
\newcommand{\D}{\mathcal{D}}
\newcommand{\cO}{\mathcal{O}}
\newcommand{\cP}{\mathcal{P}}
\newcommand{\g}{\mathfrak{g}}
\newcommand{\q}{\mathfrak{q}}
\newcommand{\h}{\mathfrak{h}}
\newcommand{\fsl}{\mathfrak{sl}}
\newcommand{\fk}{\mathfrak{k}}
\newcommand{\ft}{\mathfrak{t}}
\newcommand{\SO}{\mathrm{SO}}
\newcommand{\Lie}{\mathrm{Lie}}
\newcommand{\e}{\varepsilon}
\newcommand{\s}{\subseteq}
\newcommand{\ip}[1]{\langle #1\rangle}
\newcommand{\too}{\longrightarrow}
\newcommand{\mto}{\mapsto}
\newcommand{\mtoo}{\longmapsto}
\newcommand{\hooklongrightarrow}{\lhook\joinrel\longrightarrow}
\DeclareMathOperator{\Span}{span}
\DeclareMathOperator{\Ad}{Ad}
\DeclareMathOperator{\rk}{rk}
\DeclareMathOperator{\Spec}{Spec}
\DeclareMathOperator{\Hom}{Hom}
\newcommand{\reg}{^{\mathrm{reg}}}
\newcommand{\st}{^{\mathrm{s}}}
\newcommand{\ps}{^{\mathrm{ps}}}
\newcommand{\ass}{^{\textrm{a-ss}}}
\newcommand{\aps}{^{\textrm{a-ps}}}
\newcommand{\sll}[1]{\mkern-4mu\mathbin{/\mkern-5mu/}_{\mkern-4mu{#1}}}
\newcommand{\slll}[1]{\mkern-4mu\mathbin{/\mkern-5mu/\mkern-5mu/}_{\mkern-4mu{#1}}}
\theoremstyle{plain}
\newtheorem{theorem}{Theorem}[section]
\newtheorem{lemma}[theorem]{Lemma}
\newtheorem{proposition}[theorem]{Proposition}
\newtheorem{corollary}[theorem]{Corollary}
\theoremstyle{definition}
\newtheorem{definition}{Definition}[section]
\theoremstyle{remark}
\title{Stratified hyperk\"ahler spaces from semisimple Lie algebras}
\author{Maxence Mayrand}
\thanks{This work was supported by the Moussouris Scholarship and the Fonds de Recherche du Qu\'ebec - Nature et Technologies.}
\address{Maxence Mayrand\\ Mathematical Institute, Andrew Wiles Building\\ University of Oxford\\ Oxford, OX2 6GG\\ United Kingdom}
\email{maxence.mayrand@maths.ox.ac.uk}
\begin{document}

\begin{abstract}
We study singular hyperk\"ahler quotients of the cotangent bundle of a complex semisimple Lie group as stratified spaces whose strata are hyperk\"ahler. We focus on one particular case where the stratification satisfies the frontier condition and the partial order on the set of strata can be described explicitly by Lie theoretic data.
\end{abstract}

\maketitle

\section{Introduction}

Let $G$ be a complex reductive group, or equivalently, the complexification $K_\C$ of a compact connected Lie group $K$. Kronheimer \cite{kro88} showed that the cotangent bundle $T^*G$ can be endowed with a hyperk\"ahler structure. That is, there exist three complex structures $I_1,I_2,I_3$ on $T^*G$ that are K\"ahler with respect to a common metric and satisfy $I_1I_2I_3=-1$. Moreover, the action of $K\times K$ on $G$ by left and right multiplications lifts to an action on $T^*G$ which preserves the hyperk\"ahler structure and has a hyperk\"ahler moment map, i.e.\ a moment map for each of the three K\"ahler forms \cite[Lemma 2]{dan96}. By the hyperk\"ahler quotient construction \cite[\S3(D)]{hit87}, we can thus produce many examples of hyperk\"ahler manifolds by taking hyperk\"ahler quotients of $T^*G$ by closed subgroups of $K\times K$. 

These quotients, of course, might have singularities. For example, Dancer \cite[pp.\ 88--89]{dan93} showed that the hyperk\"ahler quotient of $T^*\mathrm{SL}(2,\C)$ by $\mathrm{U}(1)\times \mathrm{U}(1)$ at level zero is isomorphic to the $D_2$-surface $x^2-zy^2=z$: a complex surface with two isolated singularities. Nevertheless, hyperk\"ahler quotients by non-free actions can always be partitioned into locally closed manifolds each carrying a hyperk\"ahler structure \cite[Theorem 2.1]{dan97}. This is the orbit type partition and is analogous to Sjamaar--Lerman results on singular symplectic quotients \cite{sja91}. Moreover, since the pieces are locally closed, these partitions have an additional structure: a partial order given by $S\le T$ if $S\s\overline{T}$. The partition is said to satisfy the {\bf frontier condition} if $S\cap\overline{T}\ne\emptyset\Longrightarrow S\le T$. We say that a locally finite partition into locally closed manifolds is a {\bf stratification} if it satisfies the frontier condition. For example, the orbit type partition of a singular symplectic quotient is a stratification, but it is not known if the same is true for hyperk\"ahler quotients.

In this paper, we study certain hyperk\"ahler quotients of $T^*G$ by closed subgroups of $K\times K$ and describe their decomposition into hyperk\"ahler manifolds and the partial order on it. More precisely, we focus on a generalization of Dancer's example above to any complex semisimple group $G$. That is, we consider the hyperk\"ahler quotient of $T^*G$ by $T_K\times T_K$, where $T_K$ is a maximal torus in $K$. In particular, we will show that in this case, the orbit type partition is a stratification. 

Although the definition involves a group $G$, the quotient will be shown to depend only on the underlying Lie algebra $\g$. Thus, we denote it by $\D(\g)$. The structure of the stratification of $\D(\g)$ is particularly rich and can be described combinatorially from the root system $\Phi$ of $\g$. We will show that the set of strata is in bijection with the set of root subsystems of $\Phi$ and the partial order corresponds to inclusions of subsystems. In particular, we can compute the stratification structure of $\D(\g)$ explicitly for any given $\g$ (see \S\ref{aejay646nw} for examples).

The strata can also be described individually. Denote by $\D(\g)_\Psi$ the stratum corresponding to a root subsystem $\Psi\s\Phi$. Then, there is always a top stratum $\D(\g)_{\mathrm{top}}:=\D(\g)_\Phi$ and a bottom stratum $\D(\g)_{\mathrm{bottom}}:=\D(\g)_\emptyset$. We will show that $\D(\g)_{\mathrm{top}}$ is a connected open dense subset of real dimension $4(\dim\g-2\rk\g)$ and $\D(\g)_{\mathrm{bottom}}$ a finite set of $|W|$ elements, where $W$ is the Weyl group of $\g$. Note that this generalizes Dancer's example that $\D(\fsl(2,\C))$ is a complex surface with two isolated singularities. To describe the intermediate strata, let $\g_\Psi$ be a semisimple Lie algebra with root system $\Psi$ and $W_\Psi$ the Weyl group of $\g_\Psi$. Then, for any root subsystem $\Psi\s\Phi$, the stratum $\D(\g)_\Psi$ is isomorphic as a hyperk\"ahler manifold to a disjoint union of $|W|/|W_\Psi|$ copies of $\D(\g_\Psi)_{\mathrm{top}}$.

We will also describe a coarser stratification which keeps the property that the strata are disjoint unions of copies of spaces $\D(\g_\Psi)_{\mathrm{top}}$. In this case, the set of strata is in bijection with the set of root subsystems modulo the Weyl group action, or equivalently, the set of conjugacy classes of regular semisimple subalgebras of $\g$.

To obtain these results, we will first prove a Kempf--Ness type theorem for the hyperk\"ahler quotient of $T^*G$ by any closed subgroup $H$ of $K\times K$. That is, we will show that the hyperk\"ahler quotient is homeomorphic to the GIT quotient $\Spec(\C[\mu_\C^{-1}(0)]^{H_\C})$ where $\mu_\C$ is the complex part of the hyperk\"ahler moment map. This requires first showing that $T^*G$ has a global K\"ahler potential that is proper and bounded from below, and we give a complete proof of that fact. With this theorem, we can study $\D(\g)$ and its orbit type partition algebraically and obtain the above results. The proofs about the stratification of $\D(\g)$ rely on the fact that we are quotienting by a torus and hence can use a weight decomposition.

\subsubsection{Organization of the paper} In \S\ref{vgklf6ut9n}, we give the necessary background on stratified spaces, singular hyperk\"ahler quotients, root subsystems, and the hyperk\"ahler space $T^*G$. We then state precisely the results of this paper in \S\ref{jwptxktt2u}. We prove the Kempf--Ness type theorem in \S\ref{udf7cgmv9f}, and prove all facts about the stratification of $\D(\g)$ in \S\ref{7isy6y5jpy}. Finally, we give examples in \S\ref{aejay646nw}.

\subsubsection{Acknowledgments} I thank Andrew Dancer, my PhD supervisor, for suggesting me the problem and for his guidance. I also thank Frances Kirwan, Brent Pym and Dan Ciubotaru for helpful discussions. I am also grateful to the referees for their careful reading and useful comments which helped improve the paper.

\section{Preliminaries}\label{vgklf6ut9n}

\subsection{Stratified spaces and singular quotients}\label{lm4qtiz8pq}

Stratified spaces are generalizations of manifolds that allow singularities. Roughly speaking, it is a space that can be decomposed into manifolds (of possibly different dimensions) which fit together nicely. There is no universally agreed definition of stratified spaces but, for the purpose of this paper, we use the following one (cf. \cite[Definition 1.1]{sja91} \cite[Definition 2.7.3]{dui12}).

\begin{definition}\label{0w8kdfpkuc}
Let $M$ be a topological space. A {\bf stratification} of $M$ is a partition $\cP$ of $M$ satisfying the following conditions.
\begin{itemize}
\item {\bf Manifold condition.} The elements of $\cP$ are topological manifolds in the subspace topology.
\item {\bf Local condition.} $\cP$ is locally finite and its elements are locally closed.
\item {\bf Frontier condition.} For all $S,T\in\cP$, we have  $S\cap \overline{T}\neq\emptyset$ $\Longrightarrow$ $S\s\overline{T}$.
\end{itemize}
The elements of $\cP$ are called the {\bf strata} and the pair $(M,\cP)$ a {\bf stratified space}.
\end{definition}

There is a natural relation on the partition $\cP$ of a stratified space $M$ given by $S\leq T$ if $S\s\overline{T}$. It follows from the local closedness of the strata that this relation is a partial order. We call $(\cP,\le)$ the {\bf stratification poset} and view it as an abstract partially ordered set (poset). In this framework, the frontier condition is equivalent to
\[
\overline{S}=\bigcup_{T\le S}T,\quad\text{for all }S\in\cP.
\]
Thus, the stratification poset $(\cP,\le)$ is a key structural information about $M$: it captures the precise way in which the strata fit together. One of the main goals of this paper is to describe this poset for the space $\D(\g)$ in terms of combinatorial data associated to $\g$.

We will use {\bf Hasse diagrams} to visualize the stratification posets. This is the diagram obtained by drawing a node for each $i\in \cP$ and an edge upward from node $i$ to node $j$ if $i<j$ but there is no $k$ such that $i<k<j$. In particular, the highest dimensional strata are on the top and lowest dimensional one on the bottom. For example, the following figure represents a stratification of the filled equilateral triangle and the corresponding Hasse diagram:

\[
M=
\begin{tikzpicture}[baseline={([yshift=-1ex]current bounding box.center)}]
\coordinate (G0) at (0, 0) {};
\coordinate (G1) at (1, 0) {};
\coordinate (G2) at (0.5, 0.866025) {};
\filldraw[draw=black, fill=black!20] (G0) -- (G1) -- (G2) -- cycle;
\end{tikzpicture}
\qquad
\cP=
\begin{tikzpicture}[baseline={([yshift=-1.5ex]current bounding box.center)}]
\coordinate (0) at (0, 0) {};
\coordinate (1) at (1, 0) {};
\coordinate (2) at (0.5, 0.866025) {};
\fill[black!20] (0) -- (1) -- (2) -- cycle;

\coordinate (3) at (-0.259808, 0.15) {};
\coordinate (4) at (0.240192, 1.01603) {};
\coordinate (5) at (0, -0.3) {};
\coordinate (6) at (1, -0.3) {};
\coordinate (7) at (1.25981, 0.15) {};
\coordinate (8) at (0.759808, 1.01603) {};
\draw (3) -- (4);
\draw (5) -- (6);
\draw (7) -- (8);

\node[draw, circle, fill, inner sep=0.5pt] (9) at (0.5, 1.46603) {};
\node[draw, circle, fill, inner sep=0.5pt] (10) at (-0.519614, -0.3) {};
\node[draw, circle, fill, inner sep=0.5pt] (11) at (1.51962, -0.3) {};
\end{tikzpicture}
\qquad
\too
\qquad
\begin{tikzpicture}[baseline={([yshift=-1.5ex]current bounding box.center)}, xscale=0.85, yscale=0.85]
\tikzset{dot/.style={draw, circle, fill, inner sep=1pt},}
\node[dot] (0) at (1, 2) {};
\node[dot] (1) at (0.25, 1) {};
\node[dot] (3) at (1, 1) {};
\node[dot] (2) at (1.75, 1) {};
\node[dot] (5) at (0, 0) {};
\node[dot] (6) at (1, 0) {};
\node[dot] (7) at (2, 0) {};
\draw (0) -- (1);
\draw (0) -- (2);
\draw (0) -- (3);
\draw (5) -- (1);
\draw (6) -- (2);
\draw (6) -- (1);
\draw (7) -- (2);
\draw (7) -- (3);
\draw (5) -- (3);
\end{tikzpicture}
\]
Many interesting examples of stratified spaces come from quotients by non-free actions. We review next three classes of examples.

\subsubsection{Smooth quotients} If a compact Lie group $K$ acts smoothly, but not necessarily freely, on a smooth manifold $M$, then the quotient $M/K$ has a natural stratification. It is described as follows. For each closed subgroup $H\s K$, let $(H)$ be the conjugacy class of $H$ in $K$. We say that $p\in M$ has {\bf orbit type} $(H)$ if its stabilizer subgroup $K_p$ is in $(H)$, and denote the set of points of orbit type $(H)$ by
\[
M_{(H)}:=\{p\in M:K_p\in(H)\}.
\]
The {\bf orbit type partition} $\cP=\{M_{(K_p)}/K:p\in M\}$ is a stratification of $M/G$ in the sense of Definition \ref{0w8kdfpkuc}, except for the fact that the strata may have connected components of different dimensions. In any case, we can refine the partition to get a genuine stratification. See \cite[\S2.7]{dui12} for details.

\subsubsection{K\"ahler quotients} Recall that if $M$ is a symplectic manifold, $K$ a compact Lie group acting freely on $M$ by preserving the symplectic form, and $\mu:M\to\fk^*$ a moment map for this action, then the Marsden--Weinstein reduction \cite{mar74}
\[
M\sll{}K:=\mu^{-1}(0)/K
\]
is a smooth symplectic manifold (we consider only quotients at level zero in this paper). Moreover, if $M$ is K\"ahler and $K$ preserves the K\"ahler structure, then $M\sll{}K$ is also K\"ahler \cite[Theorem 3.1]{hit87} and is called the \textbf{K\"ahler quotient} of $M$ by $K$. Sjamaar--Lerman \cite{sja91} generalized this construction to non-free actions by showing that $M\sll{}K$ has a natural stratification in which the strata are K\"ahler. The strata are $(M\sll{}K)_{(K_p)}:=\mu^{-1}(0)_{(K_p)}/K$ for $p\in \mu^{-1}(0)$ with the quotient metric and the K\"ahler form is characterized by the fact that its pullback to $\mu^{-1}(0)_{(K_p)}$ is the restriction of the K\"ahler form of $M$. (Again the strata may not be of pure dimension, but we can refine the partition.)

\subsubsection{Hyperk\"ahler quotients} Some of the results of Sjamaar--Lerman for singular K\"ahler quotients have analogues in hyperk\"ahler geometry. Let $M$ be a hyperk\"ahler manifold with complex structures $I_1,I_2,I_3$ and corresponding K\"ahler forms $\omega_1,\omega_2,\omega_3$. If $K$ is a compact Lie group acting on $M$ by preserving the hyperk\"ahler structure, then a {\bf hyperk\"ahler moment map} is a map $\mu=(\mu_1,\mu_2,\mu_3):M\to(\fk^*)^3$ such that $\mu_i$ is a moment map with respect to $\omega_i$. We call the triple $(M,K,\mu)$ a {\bf tri-Hamiltonian space}. A standard result of \cite[Theorem 3.2]{hit87} says that, if $K$ acts freely, the {\bf hyperk\"ahler quotient}
\[
M\slll{}K:=\mu^{-1}(0)/K
\]
is smooth and hyperk\"ahler in a canonical way.

More generally, Dancer--Swann \cite[Theorem 2.1]{dan97} proved that, for non-free $K$-actions, $M\slll{}K$ can be partitioned into smooth hyperk\"ahler manifolds. As for K\"ahler quotients, this is the {\bf orbit type partition}
\[
\cP=\{(M\slll{}K)_{(K_p)}:p\in \mu^{-1}(0)\}
\]
where $(M\slll{}K)_{(H)}:=\mu^{-1}(0)_{(H)}/K$. Thus, the manifold condition is satisfied and it follows from the results on smooth quotients that the local condition also holds.

However, it is not known if the frontier condition holds. The issue is that to prove this condition in the K\"ahler case, Sjamaar--Lerman use a local normal form for the moment map, but there is no known equivalent in the hyperk\"ahler setting.

The space $\D(\g)$ that we study in this paper is an example of a hyperk\"ahler quotient by a non-free action whose orbit type partition is a stratification.

\subsection{Root subsystems and regular subalgebras}\label{nj5e2b9c36}

The stratification poset of $\D(\g)$ is very rich and its description in terms of Lie theoretic data will occupy a good part of the paper. A crucial ingredient is the notion of root subsystems and regular subalgebras, which we review in this section. The results of this section can be found, for example, in \cite[Chapter 6]{on94}.

Let $\g$ be a complex semisimple Lie algebra with Cartan subalgebra $\ft$, $\Phi\s\ft^*$ the set of roots, and $\g=\ft\oplus\bigoplus_{\alpha\in\Phi}\g_\alpha$ the Cartan decomposition. A subset $\Psi$ of $\Phi$ is called a {\bf root subsystem} if
\begin{enumerate}
\item[(1)] $\alpha,\beta\in\Psi$ and $\alpha+\beta\in\Phi$ $\Longrightarrow$ $\alpha+\beta\in\Psi$, 
\item[(2)] $\alpha\in\Psi$ $\Longleftrightarrow$ $-\alpha\in\Psi$.
\end{enumerate}
Equivalently, $\Psi$ is a root subsystem if $(\Span_\Z\Psi)\cap\Phi=\Psi$. A root subsystem is always a root system itself. We use the notation $\Psi\le\Phi$ to say that $\Psi$ is a root subsystem of $\Phi$. This gives a {\it partial order} on the set of root subsystems.

Root subsystems are closely related to the notion of regular subalgebras. Recall that a subalgebra $\h$ of $\g$ is called {\bf regular} if there exists a Cartan subalgebra $\ft$ of $\g$ such that $[\ft,\h]\s\h$ (this notion was studied by Dynkin \cite{dyn72} who introduced this terminology). We denote by $\mathcal{C}_\g$ the set of conjugacy classes of regular semisimple subalgebras of $\g$. Since all Cartan subalgebras are conjugate, every element of $\mathcal{C}_\g$ has a representative which is regular with respect to a fixed Cartan subalgebra $\ft$.

\begin{proposition}
The set of semisimple subalgebras of $\g$ that are regular with respect to $\ft$ is in one-to-one correspondence with the set of root subsystems of $\Phi$. The correspondence associates to $\Psi\le\Phi$ the subalgebra
\begin{equation}\label{yepi9uvjbz}
\g_\Psi := \ft_\Psi\oplus\bigoplus_{\alpha\in\Psi}\g_\alpha,
\end{equation}
where $\ft_\Psi$ is the span of the coroots $h_\alpha$ for $\alpha\in\Psi$.
\end{proposition}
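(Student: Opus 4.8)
The plan is to show that the assignment $\Psi\mapsto\g_\Psi:=\ft_\Psi\oplus\bigoplus_{\alpha\in\Psi}\g_\alpha$ is well defined (lands among semisimple subalgebras regular with respect to $\ft$), surjective, and injective. Injectivity is immediate: from $\g_\Psi$ one recovers $\Psi=\{\alpha\in\Phi:\g_\alpha\s\g_\Psi\}$, since $\g_\Psi\cap\g_\alpha$ is $\g_\alpha$ when $\alpha\in\Psi$ and $0$ otherwise. The content lies in the other two claims.

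For well-definedness I would first check that $\g_\Psi$ is an $\ft$-stable subalgebra: stability follows from $[\ft,\ft_\Psi]=0$ and $[\ft,\g_\alpha]=\g_\alpha$, and closure under the bracket follows from the standard fact $[\g_\alpha,\g_\beta]=\g_{\alpha+\beta}$ whenever $\alpha+\beta\in\Phi$ (which then lies in $\Psi$ by axiom (1)), from $[\g_\alpha,\g_{-\alpha}]=\C h_\alpha\s\ft_\Psi$ (using axiom (2)), and from $[\ft_\Psi,\g_\alpha]\s\g_\alpha$. The substantive point is that $\g_\Psi$ is \emph{semisimple}, which I would prove by showing its radical $\mathfrak{r}$ is zero. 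As an ideal, $\mathfrak{r}$ is $\mathrm{ad}(\ft_\Psi)$-stable, and since $\ft_\Psi$ acts semisimply on $\g_\Psi$ with weight spaces $\ft_\Psi$ (weight $0$) and the $\g_\alpha$ (weights $\alpha|_{\ft_\Psi}$, which are nonzero as the Killing form is definite on $\mathrm{span}_\R\Psi$), we get $\mathfrak{r}=(\mathfrak{r}\cap\ft_\Psi)\oplus\bigoplus_{\alpha\in\Psi'}\g_\alpha$ with $\Psi'=\{\alpha\in\Psi:\g_\alpha\s\mathfrak{r}\}$. If some $\alpha\in\Psi'$, then $h_\alpha=[\g_\alpha,\g_{-\alpha}]\s\mathfrak{r}$, so $[h_\alpha,\g_\beta]=\langle\beta,\alpha^\vee\rangle\g_\beta\s\mathfrak{r}$ forces every $\beta\in\Psi$ non-orthogonal to $\alpha$ into $\Psi'$; in particular $-\alpha\in\Psi'$, whence $h_\alpha\in[\mathfrak{r},\mathfrak{r}]$, and Lie's theorem makes $\mathrm{ad}(h_\alpha)$ nilpotent on $\g_\Psi$ — impossible, as it acts by $2$ on $\g_\alpha$. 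Thus $\Psi'=\emptyset$ and $\mathfrak{r}\s\ft_\Psi$; then $[\g_\alpha,\mathfrak{r}]\s\g_\alpha\cap\ft_\Psi=0$ gives $\alpha(\mathfrak{r})=0$ for all $\alpha\in\Psi$, and since $\{\alpha|_{\ft_\Psi}\}_{\alpha\in\Psi}$ spans $\ft_\Psi^*$ we get $\mathfrak{r}=0$. The same weight bookkeeping shows $\ft_\Psi$ is self-normalising in $\g_\Psi$, hence a Cartan subalgebra with root system exactly $\Psi$. (Alternatively one checks that $\kappa_\g$ restricts non-degenerately to $\g_\Psi$ and invokes the structure theory of \cite[Chapter 6]{on94}.)

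For surjectivity, let $\h$ be semisimple with $[\ft,\h]\s\h$. Then $\ft$ acts semisimply on $\h$, and comparing with the root space decomposition of $\g$ gives $\h=(\h\cap\ft)\oplus\bigoplus_{\alpha\in\Psi}\g_\alpha$ with $\Psi:=\{\alpha\in\Phi:\g_\alpha\s\h\}$ (each $\g_\alpha$ being one-dimensional). I claim $\Psi$ is a root subsystem. Axiom (1) is immediate: if $\alpha,\beta\in\Psi$ and $\alpha+\beta\in\Phi$ then $\g_{\alpha+\beta}=[\g_\alpha,\g_\beta]\s\h$. For axiom (2), suppose it fails, and set $\Psi_{\mathrm n}:=\{\beta\in\Psi:-\beta\notin\Psi\}\ne\emptyset$ and $\mathfrak{n}:=\bigoplus_{\beta\in\Psi_{\mathrm n}}\g_\beta$. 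Using axiom (1) for $\Psi$ one verifies $\mathfrak{n}$ is an ideal of $\h$: for $\gamma\in\Psi$, $\beta\in\Psi_{\mathrm n}$ with $\gamma+\beta\in\Psi$, having $-(\gamma+\beta)\in\Psi$ would force $-\beta=\gamma+\bigl(-(\gamma+\beta)\bigr)\in\Psi$, contradicting $\beta\in\Psi_{\mathrm n}$, so $\gamma+\beta\in\Psi_{\mathrm n}$. To see $\mathfrak{n}=0$, let $R=\{x\in\h:\kappa_\g(x,\h)=0\}$ be the radical of $\kappa_\g|_\h$, an ideal of $\h$ by invariance of $\kappa_\g$. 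Each $\g_\beta$ with $\beta\in\Psi_{\mathrm n}$ lies in $R$ (as $\kappa_\g$ pairs $\g_\beta$ nontrivially only with $\g_{-\beta}$ and $-\beta\notin\Psi$), so $\mathfrak{n}\s R$; but $\kappa_\g$ vanishes on $R\times R$, so Cartan's solvability criterion applied to $\mathrm{ad}(R)\s\mathfrak{gl}(\g)$ shows $\mathrm{ad}(R)$, hence $R$ (since $Z(\g)=0$), is solvable, and semisimplicity of $\h$ forces $R=0$, so $\mathfrak{n}=0$ and $\Psi$ is symmetric. Finally $\ft_\Psi\s\h\cap\ft$ because $h_\alpha=[\g_\alpha,\g_{-\alpha}]$ for $\alpha\in\Psi$, and any $x\in\h\cap\ft$ in the Killing-orthogonal complement of $\ft_\Psi$ is annihilated by every $\alpha\in\Psi$, hence central in $\h$, hence zero; thus $\h\cap\ft=\ft_\Psi$ and $\h=\g_\Psi$.

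The step I expect to be the main obstacle is the interplay between semisimplicity and the symmetry condition (2) on $\Psi$ — equivalently, the vanishing of the radical in the well-definedness argument. The naive hope that the Killing form of $\g$ restricts to the Killing form of the subalgebra is false, so semisimplicity cannot be read off directly; one must instead argue through radicals using Lie's theorem and Cartan's criterion as above (or defer to the classical structure theory of closed subsystems and regular subalgebras).
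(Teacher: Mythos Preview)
The paper does not give its own proof of this proposition: it is stated as background material in \S\ref{nj5e2b9c36} with the blanket attribution ``The results of this section can be found, for example, in \cite[Chapter 6]{on94}.'' So there is no argument in the paper to compare against; you have supplied a self-contained proof where the paper simply cites the literature, and you even anticipate this by pointing to the same reference.

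Your argument is correct. The three pieces --- injectivity, well-definedness (closure under bracket and vanishing of the radical via Lie's theorem), and surjectivity (symmetry of $\Psi$ via Cartan's criterion applied to the $\kappa_\g$-radical, and $\h\cap\ft=\ft_\Psi$ via centrality) --- all go through as written. One small point worth making explicit in the well-definedness step: when you decompose the radical $\mathfrak{r}$ under $\mathrm{ad}(\ft_\Psi)$ and conclude $\mathfrak{r}=(\mathfrak{r}\cap\ft_\Psi)\oplus\bigoplus_{\alpha\in\Psi'}\g_\alpha$, you are using not only that each $\alpha|_{\ft_\Psi}$ is nonzero but also that distinct roots in $\Psi$ restrict to distinct functionals on $\ft_\Psi$. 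This follows from the same positivity of $\kappa_\g$ on the real span of the coroots that you already invoke (if $\alpha|_{\ft_\Psi}=\beta|_{\ft_\Psi}$ then $t_\alpha-t_\beta\in\ft_\Psi$ is $\kappa_\g$-orthogonal to all of $\ft_\Psi$, hence zero), but it deserves a sentence. With that addition the proof is complete.
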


Recall that if $k$ is the number of simple factors of $\g$, there is an $(\R_{>0})^k$ family of non-degenerate symmetric invariant bilinear forms on $\g$ which are positive definite on the real span of the coroots. We call those bilinear forms {\bf admissible}. Equivalently, a bilinear form is admissible if its restriction to some (and hence all) compact real form(s) is negative definite. For example, the Killing form is admissible. 

\begin{proposition}\label{utfyudsumb}
Any admissible bilinear form on $\g$ remains admissible on $\g_\Psi$. Also, $\ft_\Psi$ is a Cartan subalgebra of $\g_\Psi$, \eqref{yepi9uvjbz} is the corresponding Cartan decomposition, and the map $\ft^*\to\ft_\Psi^*$ restricts to an isomorphism of abstract root systems from $\Span_\R\Psi$ to the root system of $\g_\Psi$ with respect to the above bilinear form.
\end{proposition}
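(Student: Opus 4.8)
The plan is to read everything off the root space decomposition \eqref{yepi9uvjbz}, the one genuinely load-bearing input being that an admissible form matches the real span of the coroots lying inside $\ft_\Psi$ with $\Span_\R\Psi$. Throughout I would write $\ft_\R\s\ft$ for the real span of all coroots $h_\alpha$ ($\alpha\in\Phi$) and $(\ft_\Psi)_\R\s\ft_\Psi$ for the real span of the $h_\alpha$ with $\alpha\in\Psi$, so that $\ft_\Psi=(\ft_\Psi)_\R\otimes_\R\C$.

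First I would check that $\ft_\Psi$ is a Cartan subalgebra of $\g_\Psi$ with root space decomposition \eqref{yepi9uvjbz}. Since $\ft_\Psi\s\ft$ it is abelian, and its elements are semisimple in $\g$, hence in $\g_\Psi$. It is self-centralizing in $\g_\Psi$: if $x=x_0+\sum_{\alpha\in\Psi}x_\alpha\in\g_\Psi$ (with $x_0\in\ft_\Psi$, $x_\alpha\in\g_\alpha$) commutes with all of $\ft_\Psi$, then $\alpha|_{\ft_\Psi}=0$ for every $\alpha$ with $x_\alpha\neq 0$, which is impossible because $\alpha(h_\alpha)=2$ and $h_\alpha\in\ft_\Psi$. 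A self-centralizing toral subalgebra of the semisimple Lie algebra $\g_\Psi$ (semisimple by the previous proposition) is a Cartan subalgebra, and then \eqref{yepi9uvjbz} is manifestly the corresponding root space decomposition, with root set $\{\alpha|_{\ft_\Psi}:\alpha\in\Psi\}$. I would also observe that for $\alpha\in\Psi$ the $\fsl_2$-triple $(e_\alpha,h_\alpha,f_\alpha)$ defining $h_\alpha$ lies entirely in $\g_\Psi$ (as $\g_{-\alpha}\s\g_\Psi$), so $h_\alpha$ is again the coroot of $\alpha|_{\ft_\Psi}$ in $\g_\Psi$; hence $(\ft_\Psi)_\R$ is exactly the real span of the coroots of $\g_\Psi$.

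Next, given an admissible form $B$ on $\g$ --- invariant, symmetric, non-degenerate, and positive definite on $\ft_\R$ --- I would argue that $B|_{\g_\Psi}$ is admissible. It is visibly invariant and symmetric. For non-degeneracy I would use that $B$ pairs $\g_\alpha$ perfectly with $\g_{-\alpha}$ and makes $\g_\alpha$ orthogonal to $\ft$ and to every $\g_\beta$ with $\beta\neq-\alpha$: since $\Psi=-\Psi$, the form is non-degenerate on $\bigoplus_{\alpha\in\Psi}\g_\alpha$, it is non-degenerate on $\ft_\Psi$ because it is positive definite on $(\ft_\Psi)_\R\s\ft_\R$, and the two summands are $B$-orthogonal. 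Positive definiteness of $B|_{\g_\Psi}$ on the real span of the coroots of $\g_\Psi$ is then just the statement that $B$ is positive definite on $(\ft_\Psi)_\R$, which holds; hence $B|_{\g_\Psi}$ is admissible. For the root systems I would use the isomorphism $\ft_\R\to\ft_\R^*$, $h\mapsto B(h,\cdot)$, induced by $B$: it sends each $h_\alpha$ to a positive multiple of $\alpha$, so it carries $(\ft_\Psi)_\R$ onto $\Span_\R\Psi$. Consequently the restriction map $r\colon\ft^*\to\ft_\Psi^*$ is injective on $\Span_\R\Psi$ (if $\lambda=B(h,\cdot)$ with $h\in(\ft_\Psi)_\R$ vanishes on $\ft_\Psi$, then $B(h,h_\alpha)=0$ for all $\alpha\in\Psi$, whence $h=0$), and since $\dim\Span_\R\Psi=\dim(\ft_\Psi)_\R=\rk\g_\Psi$, it is a linear isomorphism onto the ambient space of the root system of $\g_\Psi$, restricting to a bijection of $\Psi$ onto that root system (surjective by the root set computed above, injective because differences of elements of $\Psi$ lie in $\Span_\Z\Psi$). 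Finally, writing $\lambda=B(h_\lambda,\cdot)$ and $\mu=B(h_\mu,\cdot)$ with $h_\lambda,h_\mu\in(\ft_\Psi)_\R$ shows that the inner products induced by $B$ and by $B|_{\g_\Psi}$ both send $(\lambda,\mu)$ to $B(h_\lambda,h_\mu)$, so $r|_{\Span_\R\Psi}$ is an isometry, hence an isomorphism of abstract root systems.

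The argument is routine structure theory and I do not foresee a serious obstacle; the step that actually carries weight --- and that makes the clean statement true --- is the identification, via an admissible $B$, of $(\ft_\Psi)_\R$ with $\Span_\R\Psi$, as this is what simultaneously forces the restriction map to be bijective on roots and to be an isometry. Some care is needed to keep the integral, complex, and real structures apart (coroots and $\fsl_2$-triples are defined over $\Z$, the Cartan subalgebra over $\C$, and admissibility over $\R$).
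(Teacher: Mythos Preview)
Your argument is correct. The paper does not actually prove this proposition: it is stated as background in \S\ref{nj5e2b9c36} with the blanket attribution ``The results of this section can be found, for example, in \cite[Chapter 6]{on94}.'' So there is no proof in the paper to compare against; you have supplied a complete and correct verification of a result the paper simply cites. The one point worth highlighting is exactly the one you flag: the identification via $B$ of $(\ft_\Psi)_\R$ with $\Span_\R\Psi$ is what makes the restriction map both bijective on roots and isometric, and your handling of that step is clean.
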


Let $W_\g=W_\Phi=W(\g,\ft)$ be the Weyl group of $\g$ with respect to $\ft$. It acts on the set of root subsystems by $w\cdot\Psi:=\{w\cdot\alpha:\alpha\in\Psi\}$.

\begin{proposition}\label{cv6qokpmij}
Let $\Psi_1$ and $\Psi_2$ be two root subsystems. Then, $\g_{\Psi_1}=\g_{\Psi_2}$ if and only if there exists $w\in W_\Phi$ such that $w\cdot\Psi_1=\Psi_2$. Thus, the map $\Psi\mto\g_\Psi$ descends to a bijection $\{\textrm{root subsystems of }\Phi\}/W_\Phi\to\mathcal{C}_\g$.
\end{proposition}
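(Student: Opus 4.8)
The plan is to fix a connected reductive group $G$ with $\Lie(G)=\g$ and a maximal torus $T$ with $\Lie(T)=\ft$, so that $W_\Phi=N_G(T)/T$ acts on $\ft$ and on the root and coroot data of $(\g,\ft)$, and $\mathrm{Inn}(\g)=\Ad(G)$. Read through the reduction to $\mathcal{C}_\g$, what has to be shown is: \emph{$\g_{\Psi_1}$ and $\g_{\Psi_2}$ are conjugate under $\mathrm{Inn}(\g)$ if and only if $w\cdot\Psi_1=\Psi_2$ for some $w\in W_\Phi$}; the asserted bijection then follows formally.

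One direction is immediate. Given $w\in W_\Phi$ with $w\cdot\Psi_1=\Psi_2$, choose a representative $n\in N_G(T)$ of $w$. Then $\Ad(n)\in\mathrm{Inn}(\g)$ preserves $\ft$, sends $\g_\alpha$ to $\g_{w\cdot\alpha}$ and $h_\alpha$ to $h_{w\cdot\alpha}$ for all $\alpha\in\Phi$, and hence carries $\g_{\Psi_1}=\ft_{\Psi_1}\oplus\bigoplus_{\alpha\in\Psi_1}\g_\alpha$ onto $\g_{w\cdot\Psi_1}=\g_{\Psi_2}$.

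For the converse, suppose $\Ad(g)\g_{\Psi_1}=\g_{\Psi_2}$. Since $\g_{\Psi_1}$ and $\g_{\Psi_2}$ are regular with respect to $\ft$, the subalgebra $\ft$ normalizes both of them, so $\ft':=\Ad(g)\ft$ normalizes $\Ad(g)\g_{\Psi_1}=\g_{\Psi_2}$. Put $\mathfrak{m}:=N_\g(\g_{\Psi_2})$. Because $\g_{\Psi_2}$ is semisimple, every element of $\mathfrak{m}$ induces an inner derivation of $\g_{\Psi_2}$, so $\mathfrak{m}=\g_{\Psi_2}\oplus\mathfrak{z}_\g(\g_{\Psi_2})$ is reductive. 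Both $\ft$ and $\ft'$ lie in $\mathfrak{m}$ and, being Cartan subalgebras of $\g$ contained in $\mathfrak{m}$, are Cartan subalgebras of $\mathfrak{m}$; hence there is $\theta_0\in\mathrm{Inn}(\mathfrak{m})$ with $\theta_0(\ft')=\ft$. Such a $\theta_0$ preserves the ideal $\g_{\Psi_2}$ of $\mathfrak{m}$, and it is a product of maps $\exp(\mathrm{ad}\,x)$ with $x$ nilpotent in $\mathfrak{m}$, hence nilpotent in $\g$ (by Jacobson--Morozov a nilpotent of a semisimple subalgebra of $\g$ is the nilpositive element of an $\fsl(2)$-triple in $\g$, so ad-nilpotent in $\g$); therefore $\theta_0$ extends to $\theta\in\mathrm{Inn}(\g)$ with $\theta|_{\mathfrak{m}}=\theta_0$. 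Now $\theta\circ\Ad(g)\in\mathrm{Inn}(\g)$ preserves $\ft$, so it restricts to some $w\in W_\Phi$ on $\ft$, while it sends $\g_{\Psi_1}$ to $\theta(\g_{\Psi_2})=\g_{\Psi_2}$. Thus $\g_{w\cdot\Psi_1}=\g_{\Psi_2}$, and since distinct root subsystems of $\Phi$ determine distinct regular subalgebras of $\g$, we conclude $w\cdot\Psi_1=\Psi_2$.

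Granting the equivalence, the map $\Psi\mapsto[\g_\Psi]\in\mathcal{C}_\g$ is constant on $W_\Phi$-orbits and separates distinct ones, so it induces an injection $\{\textrm{root subsystems of }\Phi\}/W_\Phi\hookrightarrow\mathcal{C}_\g$; it is onto because every class in $\mathcal{C}_\g$ has a representative regular with respect to $\ft$, which is then of the form $\g_\Psi$. The step that needs the most care is the converse direction — the reduction to conjugacy of the Cartan subalgebras $\ft$ and $\ft'$ inside $\mathfrak{m}=N_\g(\g_{\Psi_2})$, together with the verification that the conjugating inner automorphism can be chosen to fix $\g_{\Psi_2}$ and to extend from $\mathfrak{m}$ to $\g$; the remaining steps are routine. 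Alternatively, one may run the same argument with the connected subgroups $G_{\Psi_i}$ of $G$ integrating $\g_{\Psi_i}$ and their maximal tori, replacing conjugacy of Cartan subalgebras by conjugacy of maximal tori inside $N_G(G_{\Psi_2})^\circ$.
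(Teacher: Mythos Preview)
The paper does not actually prove this proposition; it is stated in the preliminaries section \S\ref{nj5e2b9c36} as a known fact with the reference to \cite[Chapter 6]{on94}, so there is no ``paper's proof'' to compare against. Your argument is a correct and self-contained proof of the result (with the statement read, as it must be, as equality of conjugacy classes rather than literal equality of subalgebras).

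One minor simplification: the Jacobson--Morozov detour is unnecessary. Over $\C$, for \emph{any} $x\in\mathfrak{m}\subseteq\g$ the automorphism $\exp(\mathrm{ad}_\g x)$ lies in $\mathrm{Inn}(\g)=\Ad(G)$ (it equals $\Ad(\exp_G x)$), preserves the subalgebra $\mathfrak{m}$ since $\mathrm{ad}_\g(x)(\mathfrak{m})\subseteq\mathfrak{m}$, and restricts there to $\exp(\mathrm{ad}_\mathfrak{m} x)$. As $\mathrm{Inn}(\mathfrak{m})$ is generated by such exponentials, every $\theta_0\in\mathrm{Inn}(\mathfrak{m})$ extends to some $\theta\in\mathrm{Inn}(\g)$ with $\theta|_\mathfrak{m}=\theta_0$, without any nilpotency hypothesis. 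The rest of your reduction---that $\mathfrak{m}=N_\g(\g_{\Psi_2})=\g_{\Psi_2}\oplus\mathfrak{z}_\g(\g_{\Psi_2})$ is reductive, that $\ft$ and $\ft'=\Ad(g)\ft$ are Cartan subalgebras of $\mathfrak{m}$, and that an inner automorphism of $\g$ fixing $\ft$ acts on root spaces via an element of $W_\Phi$---is clean and correct.
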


There is also a natural partial order on $\mathcal{C}_\g$ induced by inclusion: we say that $[\h_1]\le[\h_2]$ if there exists an inner automorphism $\varphi$ of $\g$ such that $\varphi(\h_1)\s\h_2$.

Let $\h$ be a regular semisimple subalgebra of $\g$, say $\h=\g_\Psi$ for some $\Psi\s\Phi$. Then, the Weyl group $W_\h$ of $\h$ can be viewed as a subgroup of $W_\g$, namely the one generated by the simple reflections $s_\alpha$ for $\alpha\in\Psi$. In particular, the index $|W_\g:W_\h|:=|W_\g|/|W_\h|$ is a well-defined positive integer. We define the {\bf embedding number} of $\h$ in $\g$ to be
\begin{equation}\label{t9mcd8c0dw}
m_\g(\h)=|W_\g:W_\h||\{w\cdot \Psi:w\in W_\g\}|,
\end{equation}
where the second factor is the number of root subsystems in the $W_\g$-orbit of $\Psi$. The embedding number is thus a positive integer which depends on the particular way in which $\h$ embeds in $\g$. It depends only on the conjugacy class of $\h$ and hence descends to a map $m_\g:\mathcal{C}_\g\to\N$. Note that we always have $m_\g(\g)=1$ and $m_\g(0)=|W_\g|$. These numbers will be important for our study of the stratified space $\D(\g)$, as they will count the number of connected components of the strata.

\subsection{The hyperk\"ahler space $T^*G$}\label{4vlopz5r}

Let $G$ be a complex reductive group. Then, $T^*G$, being the cotangent bundle of a complex manifold, has a canonical holomorphic symplectic form. Kronheimer \cite{kro88} showed that there is a hyperk\"ahler structure on $T^*G$ compatible with this form. In other words, if $I_1,I_2,I_3$ are the three complex structures of this hyperk\"ahler structure and $\omega_1,\omega_2,\omega_3$ the associated K\"ahler forms, then $I_1$ is the natural complex structure on $T^*G$ and $\omega_2+i\omega_3$ the canonical holomorphic-symplectic form. This hyperk\"ahler structure is constructed by an infinite-dimensional version of the hyperk\"ahler quotient construction where the role of the moment map is played by a system of nonlinear ODEs called {\it Nahm's equations}. It requires a choice of a compact real form $K$ of $G$ and an invariant inner-product on $\fk=\Lie(K)$. However, since all compact real forms are conjugate, the hyperk\"ahler structure does not depend on $K$ in an essential way. But it does depend on the invariant inner-product on $\fk$, or equivalently, on an admissible bilinear form on $\g=\Lie(G)$.

Recall that for any hyperk\"ahler manifold $(M,g,I_1,I_2,I_3)$, if $(a_1,a_2,a_3)\in\R^3$ lies in the $2$-sphere, then the endomorphism $a_1I_1+a_2I_2+a_3I_3$ is a complex structure with respect to which $(M,g)$ is K\"ahler. Hence, $M$ has a {\bf 2-sphere of complex structures}. In the case of $T^*G$, there is an isometric $\SO(3)$-action which rotates these complex structures so, from the point of view of K\"ahler geometry, they are all equivalent.

The hyperk\"ahler space $T^*G$ also has a large group of symmetries: Dancer--Swann \cite{dan96} showed that the action of $K\times K$ on $G$ by left and right multiplications lifts to an action on $T^*G$ which preserves the hyperk\"ahler structure and has a canonical hyperk\"ahler moment map (which comes from evaluating solutions to Nahm's equations at the endpoints of the interval on which they are defined \cite[Lemma 2]{dan96}). Moreover, this action commutes with the $\SO(3)$-action. 

Let $H$ be a closed subgroup of $K\times K$ and $\mu$ the induced hyperk\"ahler moment map for the action of $H$ on $T^*G$, i.e.\ the composition of Dancer--Swann's hyperk\"ahler moment map $T^*G\to(\fk\times\fk)^*\otimes\R^3$ with the restriction map $(\fk\times\fk)^*\otimes\R^3\to\h^*\otimes\R^3$. Write $\mu_\C:=\mu_2+i\mu_3$ for the {\bf complex part} of $\mu$ and $\mu_\R:=\mu_1$ for the {\bf real part} of $\mu$. There is an algebraic description of $\mu_\C$ which is useful for computations. First, there is a complex algebraic isomorphism $T^*G\to G\times\g^*$ given by translating every cotangent space to the identity using left multiplication. On the latter space, the action of $K\times K$ is $(a,b)\cdot(g,\xi)=(agb^{-1},\Ad_b^*\xi)$ and this extends to an algebraic action of $G\times G$ which preserves the canonical holomorphic-symplectic form. Then, $\mu_\C$ is the composition of the map
\begin{equation}\label{g6ydwgujbz}
G\times\g^*\too\g^*\times\g^*,\quad (g,\xi)\mtoo(\Ad_g^*\xi,-\xi)
\end{equation}
with the restriction $\g^*\times\g^*\to\h^*_\C$ and is a holomorphic-symplectic moment map for the action of $H_\C\s G\times G$ on $T^*G$. See \cite[\S4]{dan96} and \cite[\S2]{bie97} for details.

\section{Statement of results}\label{jwptxktt2u}

In this section, we state precisely the main results of this paper. The proofs will be in the subsequent sections.

Let $G$ be a connected complex semisimple Lie group, $K$ a compact real form of $G$, and $H$ a closed subgroup of $K\times K$. There is a canonical hyperk\"ahler moment map for the action of $K\times K$ on $T^*G$ \cite[\S3]{dan96}, which is in fact unique since here $K$ is semisimple. Let $\mu = (\mu_\R, \mu_\C)$ be the induced hyperk\"ahler moment map for the $H$-action as in \S\ref{4vlopz5r}. By \S\ref{lm4qtiz8pq}, the hyperk\"ahler quotient $T^*G\slll{}H$ can be partitioned into smooth hyperk\"ahler manifolds, but it is not known if this is a stratification in the sense of Definition \ref{0w8kdfpkuc}. 

We first give an algebraic description of the topological space $T^*G\slll{}H$ and its orbit type partition which works for all $H$ and needs no reference to the hyperk\"ahler setting. Recall that the set of {\bf polystable points} $\mu_\C^{-1}(0)\ps$ is the set of points in $\mu_\C^{-1}(0)$ whose $H_\C$-orbit is closed. The inclusion $\mu_\C^{-1}(0)\ps\hookrightarrow\mu_\C^{-1}(0)$ descends to a bijection from $\mu_\C^{-1}(0)\ps/H_\C$ to the GIT quotient $\Spec(\C[\mu_\C^{-1}(0)]^{H_\C})$ \cite[\S I.1]{lun73} and, moreover, this map is a homeomorphism in the Euclidean topology \cite[\S2.1]{lun76} \cite[Theorem 2.1]{flo14}. Thus, the $H_\C$-orbit type partition on $\mu_\C^{-1}(0)\ps/H_\C$ induces a partition on $\Spec(\C[\mu_\C^{-1}(0)]^{H_\C})$ sometimes called the algebraic or Luna stratification \cite[\S III.2]{lun73} \cite[\S6.9]{pop94}.

\begin{theorem}\label{9up121d4yh}
We have $\mu^{-1}(0)\s\mu_\C^{-1}(0)\ps$ and this inclusion induces a homeomorphism
\[
T^*G\slll{}H\too \mu_\C^{-1}(0)\ps/H_\C=\Spec(\C[\mu_\C^{-1}(0)]^{H_\C}).
\]
Moreover, the $H$-orbit type partition of $T^*G\slll{}H$ coincides with the $H_\C$-orbit type partition of $\mu_\C^{-1}(0)\ps/H_\C$ and the homeomorphism restricts to biholomorphisms on the strata.
\end{theorem}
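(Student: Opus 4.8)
The plan is to build the homeomorphism in three stages, following the classical Kempf--Ness strategy adapted to the hyperk\"ahler setting, and then to upgrade it to a statement about the orbit type partitions.

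First I would establish that $T^*G$ admits a global K\"ahler potential $\rho$ (for the complex structure $I_1$) that is $K\times K$-invariant, proper, and bounded from below. For this one must unwind Kronheimer's construction: a point of $T^*G$ is (a gauge class of) a solution to Nahm's equations on an interval, and $\rho$ should be built from an $L^2$-type norm of the solution. The properness and lower bound are the genuinely analytic part of the argument; I would reduce the estimate to the asymptotics of Nahm flows with prescribed boundary values, using the isometric $\SO(3)$-action to organize the three complex structures symmetrically. This is the step I expect to be the main obstacle, since it requires controlling solutions of a nonlinear ODE system rather than a purely algebraic manipulation; everything downstream is comparatively formal.

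With such a $\rho$ in hand, the real moment map $\mu_\R$ for the $H$-action is (up to the usual identification $\h^*\cong\h$ via an invariant inner product) essentially the derivative of $\rho$ along the $H_\C$-action restricted to each orbit in $\mu_\C^{-1}(0)$. The function $t\mapsto\rho(\exp(it\xi)\cdot m)$ on each one-parameter subgroup is then convex because $\rho$ is a K\"ahler potential, and it is proper and bounded below because $\rho$ is. Standard Kempf--Ness theory (in the analytic formulation, e.g.\ the arguments in \cite{sja91} or \cite{flo14}) now yields: (i) every closed $H_\C$-orbit in $\mu_\C^{-1}(0)$ meets $\mu^{-1}(0)=\mu_\R^{-1}(0)\cap\mu_\C^{-1}(0)$, and it does so in a single $H$-orbit; (ii) conversely every $H_\C$-orbit meeting $\mu^{-1}(0)$ is closed in $\mu_\C^{-1}(0)$, which gives the inclusion $\mu^{-1}(0)\s\mu_\C^{-1}(0)\ps$; and (iii) the induced map $\mu^{-1}(0)/H\to\mu_\C^{-1}(0)\ps/H_\C$ is a continuous bijection. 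Here I should note that $\mu_\C^{-1}(0)$ need not be smooth, but the convexity/properness argument only needs $\rho$ restricted to the (closed, $H_\C$-invariant) affine variety $\mu_\C^{-1}(0)$, so this causes no trouble; the identification of the target with $\Spec(\C[\mu_\C^{-1}(0)]^{H_\C})$ is then exactly the cited results of Luna and of \cite{flo14}.

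To see the map is a homeomorphism rather than merely a continuous bijection, I would use properness of $\rho$: preimages of compact sets under $\mu^{-1}(0)/H\to\mu_\C^{-1}(0)\ps/H_\C$ are compact because a bound on $\rho$ cuts out a compact subset of $\mu^{-1}(0)/H$, and the orbit-closure retraction in the GIT quotient decreases $\rho$. A proper continuous bijection of, say, locally compact Hausdorff spaces is a homeomorphism. Finally, for the statement about strata: the homeomorphism intertwines the $H$-action on $\mu^{-1}(0)$ with the $H_\C$-action on $\mu_\C^{-1}(0)\ps$ up to the orbit-wise identification $H_p\cdot\C\cong (H_\C)_p$ valid at polystable points (a point whose $H_\C$-orbit is closed has reductive stabilizer, and its intersection with $\mu^{-1}(0)$ has stabilizer a maximal compact of that reductive group), so orbit types correspond. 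That each stratum $(T^*G\slll{}H)_{(H_p)}\to(\mu_\C^{-1}(0)\ps/H_\C)_{((H_\C)_p)}$ is a biholomorphism follows from the analogue of Sjamaar--Lerman's argument in the K\"ahler case \cite{sja91}: on a fixed orbit type, $\mu_\C^{-1}(0)_{(H)}$ is a smooth $I_2+iI_3$-holomorphic submanifold, the K\"ahler reduction with respect to $\mu_\R$ on it is the hyperk\"ahler stratum by \cite{dan97}, and the Kempf--Ness map is a biholomorphism onto the corresponding piece of the affine GIT quotient by the same convexity argument applied transversally. I would remark that no frontier condition is claimed here --- that is precisely the extra input obtained later for $\D(\g)$ via the torus weight decomposition.
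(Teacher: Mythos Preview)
Your proposal is correct and follows essentially the same route as the paper: build a proper, bounded-below, $K\times K$-invariant K\"ahler potential on $T^*G$ from an $L^2$-type norm of Nahm solutions (the paper uses $F(k,X)=\tfrac14\int_0^1 2\|T_1^X\|^2+\|T_2^X\|^2+\|T_3^X\|^2\,dt$ and proves properness via convexity of $\|T_i\|^2$ along Nahm flows), and then run Kempf--Ness. The only packaging difference is that the paper invokes Sjamaar's analytic (poly)stability framework \cite{sja95} directly---reducing everything to $M\ass=M$, which follows from properness of the potential---and uses semisimplicity of $K\times K$ to identify the moment map coming from $F$ with Dancer--Swann's canonical one, while you spell out the convexity and homeomorphism arguments by hand; the substance is the same.
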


The proof follows from a slight generalization of the classic Kempf--Ness theorem to varieties with global K\"ahler potentials. The main step is to show that every point is analytically semistable, i.e.\ that the closure of the $H_\C$-orbit of every point in $\mu_\C^{-1}(0)$ intersects $\mu_\R^{-1}(0)$. One has to go to the gauge-theoretic point of view using Nahm's equations and do some analysis since $\mu_\R$ has no explicit algebraic description as a map on $T^*G$ (the issue is that the passage from $T^*G$ to Nahm's equations relies on an existence result in analysis and hence is not explicit). Note that the theorem implies that the pieces in the orbit type partition of $\mu_\C^{-1}(0)\ps/H_\C$ are smooth, which is not immediately obvious since $\mu_\C^{-1}(0)$ might be singular. In addition, the theorem says that with respect to any element in the 2-sphere of complex structures, $T^*G\slll{}H$ has the structure of a complex affine variety.

We are mostly interested in the special case
\[
\D(G):=T^*G\slll{}(T_K\times T_K)
\]
where $T_K$ is a maximal torus in $K$. Note that $\D(G)$ does not depend on the choice of maximal torus since they are all conjugate. On the other hand, the hyperk\"ahler structure does depend on the choice of admissible bilinear form on $\g$. A crucial ingredient in the description of $\D(G)$ is that, in fact, it depends {\it only} on $\g$ and its bilinear form. More precisely:

\begin{theorem}\label{qaqfm3dpar}
Let $\tilde{G}$ be the universal cover of $G$. Then, there is a homeomorphism $\D(\tilde{G})\to\D(G)$ which preserves the orbit type partition and restricts to hyperk\"ahler isomorphisms on the strata.
\end{theorem}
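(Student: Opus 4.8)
The plan is to reduce everything to Theorem~\ref{9up121d4yh} and the algebraic description of the quotient, so that the map between $\D(\tilde G)$ and $\D(G)$ comes from a map of affine GIT quotients induced by the covering $\pi\colon\tilde G\to G$. First I would fix a compatible choice of data: a maximal torus $\tilde T_K\s\tilde K$ mapping onto $T_K\s K$ under $\pi$, and the \emph{same} admissible bilinear form on $\g=\Lie(\tilde G)=\Lie(G)$ on both sides, so that the hyperk\"ahler structures on $T^*\tilde G$ and $T^*G$ are ``the same'' Lie-algebra-theoretically. Using the isomorphism $T^*G\cong G\times\g^*$ of \S\ref{4vlopz5r}, the covering induces a map $\Pi\colon \tilde G\times\g^*\to G\times\g^*$, $(\tilde g,\xi)\mapsto(\pi(\tilde g),\xi)$, which is a covering map of complex manifolds, is $\tilde T_K\times\tilde T_K$-equivariant (the action factoring through $T_K\times T_K$ on the target), intertwines the holomorphic-symplectic forms, and — because $\mu_\C$ is given by the explicit formula \eqref{g6ydwgujbz} composed with restriction to $\h_\C^*=\ft_\C\oplus\ft_\C$, and $\Ad_{\pi(\tilde g)}=\Ad_{\tilde g}$ on $\g$ — intertwines the complex moment maps: $\mu_\C^{G}\circ\Pi=\mu_\C^{\tilde G}$. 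Hence $\Pi$ restricts to a covering $\mu_\C^{-1}(0)_{\tilde G}\to\mu_\C^{-1}(0)_{G}$.

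Next I would pass to the algebraic quotients. The key point is that $\Pi$ is a quotient by the finite central subgroup $Z:=\ker\pi\s\tilde T_K$ (more precisely its complexification $Z_\C\s\tilde T_{K,\C}$, which is finite since $Z$ is). Acting on the fibre-trivialized picture, $Z_\C$ acts only on the $\tilde G$-factor by central translation, so $\mu_\C^{-1}(0)_G=\mu_\C^{-1}(0)_{\tilde G}/Z_\C$ as affine varieties, and the residual action of the torus $\tilde T_{K,\C}\times\tilde T_{K,\C}$ on the quotient factors through $(\tilde T_{K,\C}/Z_\C)\times(\tilde T_{K,\C}/Z_\C)=T_{K,\C}\times T_{K,\C}$ (using that $Z_\C$ acts trivially via the second, adjoint, factor). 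Taking invariants in two stages,
\[
\C\bigl[\mu_\C^{-1}(0)_G\bigr]^{T_{K,\C}\times T_{K,\C}}
=\Bigl(\C\bigl[\mu_\C^{-1}(0)_{\tilde G}\bigr]^{Z_\C}\Bigr)^{T_{K,\C}\times T_{K,\C}}
=\C\bigl[\mu_\C^{-1}(0)_{\tilde G}\bigr]^{\tilde T_{K,\C}\times\tilde T_{K,\C}},
\]
since the group generated by $Z_\C$ and $T_{K,\C}\times T_{K,\C}$ (with its two-factor action) is exactly $\tilde T_{K,\C}\times\tilde T_{K,\C}$. This gives an isomorphism of the affine GIT quotients, hence by Theorem~\ref{9up121d4yh} a homeomorphism $\D(\tilde G)\to\D(G)$. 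To see it preserves the orbit type partition, I would check that $\Pi$ identifies the $\tilde T_{K,\C}\times\tilde T_{K,\C}$-orbit of a point with the $T_{K,\C}\times T_{K,\C}$-orbit of its image, and hence matches stabilizers up to the central kernel; since orbit type is detected by the stabilizer of the $H_\C$-action on polystable points and the Luna stratification is intrinsic to the pair (variety, group action), the partitions correspond. Finally, on each stratum the homeomorphism is induced by the biholomorphism $\Pi$ (a local isomorphism) descending to orbit spaces, and since by Theorem~\ref{9up121d4yh} the strata carry their hyperk\"ahler structure characterized by pullback to $\mu^{-1}(0)_{(H_p)}$ — and $\mu_\R$, coming from Nahm's equations, is also $\pi$-compatible because the whole Nahm-equation construction on $T^*\tilde G$ and $T^*G$ differs only by the covering on the group variable — the maps on strata are hyperk\"ahler isomorphisms.

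The main obstacle I anticipate is the very last point: justifying that the \emph{real} moment map $\mu_\R$ (equivalently, the full hyperk\"ahler structure, including the metric) is compatible with the covering $\pi$. Unlike $\mu_\C$, which has the explicit algebraic formula \eqref{g6ydwgujbz} that manifestly only sees $\Ad_g$ and hence descends through $\pi$, the map $\mu_\R$ is defined gauge-theoretically via solutions of Nahm's equations and has no closed algebraic form on $T^*G$. I would handle this by going back to Kronheimer's construction: the hyperk\"ahler structure on $T^*\tilde G$ (resp.\ $T^*G$) is built as a moduli space of solutions to Nahm's equations on an interval with boundary values in $\tilde K_\C\cong\g$-data (resp.\ $K_\C$); the covering $\pi$, being a local isomorphism inducing the identity on $\g$ and on $\fk$, sends a solution for $\tilde G$ to a solution for $G$ and this correspondence is a local isometry intertwining all three complex structures, hence intertwines the hyperk\"ahler moment maps for $\tilde K\times\tilde K$ and $K\times K$, and in particular their $\tilde T_K\times\tilde T_K$- and $T_K\times T_K$-components. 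Once this compatibility of $\mu_\R$ is in hand, the identification $\mu^{-1}(0)_{\tilde G}/Z=\mu^{-1}(0)_G$ of hyperk\"ahler-quotient data is immediate, and combined with Theorem~\ref{9up121d4yh} it upgrades the homeomorphism above to one restricting to hyperk\"ahler isomorphisms on strata. (Alternatively, one can avoid re-examining Nahm's equations by noting that Theorem~\ref{9up121d4yh} already transports the hyperk\"ahler structure of each stratum to the complex-algebraic side together with its K\"ahler form characterized intrinsically, so matching the complex-algebraic quotients and their $\SO(3)$-families of complex structures — which only involves $\mu_\C$ for the three rotated complex structures — suffices; this is the route I would take if the Nahm-equation bookkeeping proves cumbersome.)
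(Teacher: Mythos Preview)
Your proposal is correct and follows the same overall strategy as the paper: use the covering $\Pi:T^*\tilde G\to T^*G$, verify via Nahm's equations that it is a hyperk\"ahler map intertwining the moment maps (the paper states this as ``$F$ is hyperk\"ahler since it descends from the identity map on the space of solutions to Nahm's equations on $\fk$''), and then descend to the quotients. The paper packages the descent step cleanly as a general Lemma~\ref{zkpb6ssaaq}: an equivariant hyperk\"ahler map respecting moment-map zero levels descends to a hyperk\"ahler map on orbit-type strata.

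Where your argument diverges is in establishing that the induced map $\D(\tilde G)\to\D(G)$ is a homeomorphism preserving the orbit-type partition. You argue by invariants in stages on $\C[\mu_\C^{-1}(0)]$, quotienting first by the finite central kernel $Z$ and then by the torus. The paper instead first proves Proposition~\ref{0ae71l8pfy}, identifying $\D(G)$ with $(T^*\cO\cap(\g\times\ft^\circ))\ps/T$ by a reduction in stages in which the first $T$-quotient of $G\times\ft^\circ$ yields $T^*\cO$. Since this model involves only the adjoint action of $T$ on data in $\g$, it is manifestly independent of the isogeny class of $G$, and both the homeomorphism and the matching of orbit types become tautological via a commutative square. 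Your route is a bit more direct but yields exactly the theorem; the paper's route has the advantage that the $T^*\cO$ model is precisely what is used throughout \S\ref{7isy6y5jpy} to analyze the stratification, so proving it here does double duty. Your parenthetical ``alternative'' at the end (avoiding Nahm's equations by matching only the $\SO(3)$-family of $\mu_\C$'s) is not really a shortcut: rotating to another complex structure mixes $\mu_\R$ into the new $\mu_\C$, so you are back to needing the Nahm-equation compatibility anyway.
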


This justifies the notation $\D(\g)$ instead of $\D(G)$. The main point about taking $H=\allowbreak T_K \times T_K$ is that the GIT quotient $\Spec(\C[\mu_\C^{-1}(0)]^{H_\C})$ can be studied with a weight decomposition. This enables us to prove:

\begin{theorem}\label{2rzpnjamfd}
The orbit type partition of $\D(\g)$ is a stratification.
\end{theorem}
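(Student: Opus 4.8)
The plan is to exploit Theorem~\ref{9up121d4yh}, which reduces the problem to an algebraic one: we must show that the Luna stratification of $\Spec(\C[\mu_\C^{-1}(0)]^{H_\C})$, with $H = T_K \times T_K$ and hence $H_\C = T\times T$ a complex torus, satisfies the frontier condition. (The manifold and local conditions are automatic by \S\ref{lm4qtiz8pq}, and Theorem~\ref{9up121d4yh} already gives us that the strata are smooth.) So let $X = \mu_\C^{-1}(0) \subseteq G\times\g^*$ and $Q = X \sll{} (T\times T)$. The first step is to set up coordinates adapted to the torus action. Using the identification $T^*G \cong G\times\g^*$ from \S\ref{4vlopz5r}, the vanishing of the $\ft\times\ft$-component of the moment map \eqref{g6ydwgujbz} says that $(g,\xi)\in X$ iff the $\ft$-components of $\Ad_g^*\xi$ and of $-\xi$ both vanish, i.e.\ $\xi$ and $\Ad_g^*\xi$ both lie in $\bigoplus_{\alpha\in\Phi}\g_{-\alpha}$ (identifying $\g^*\cong\g$ via the admissible form). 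I would work out the weight decomposition of $\C[X]$ under $T\times T$ explicitly: functions decompose into weight spaces indexed by pairs of characters, and $\C[X]^{T\times T}$ is the zero-weight part.

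The key structural input is that for a \emph{torus} acting on an affine variety, the Luna stratification has a clean description, and its frontier condition can be checked via one-parameter subgroups. Concretely, the stratum of a polystable orbit is governed by the identity component of the (conjugacy class of the) stabilizer, but since $T\times T$ is abelian, all stabilizers of points are honest subgroups of $T\times T$ up to nothing — there is no conjugation to worry about. So the strata of $Q$ are indexed by the subgroups $L\subseteq T\times T$ that arise as stabilizers of polystable points, and $Q_{L} \le Q_{L'}$ should correspond to $L \supseteq L'$ together with a compatibility of the relevant fixed-point subvarieties. I would then identify, for each such $L$, the fixed locus $X^L$ and show that $\overline{Q_{L'}} \supseteq Q_L$ exactly when the polystable points with stabilizer \emph{containing} $L'$ degenerate (via a $1$-PS landing in $L'$, equivalently leaving $L'$-fixed-point set) to points with stabilizer $L$. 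The payoff of the torus hypothesis is that these degenerations are controlled by the combinatorics of weights: a polystable point with stabilizer $L$ sits in the open $L$-fixed stratum of $X^L$, and any point of $\overline{Q_{L'}}$ meeting $Q_L$ forces $L \supseteq L'$ by semicontinuity of stabilizers, after which one runs a Hilbert--Mumford / limit argument within $X^{L'}$ to conclude $Q_L \subseteq \overline{Q_{L'}}$. This is precisely the argument that fails in general (non-abelian, non-torus) Luna stratifications only for subtle reasons, but goes through for tori; alternatively one can invoke that Luna strata are locally closed and, for torus quotients, the closure of a stratum is a union of strata because the étale slice at a polystable point with stabilizer $L$ is a $T\times T$-module whose own Luna stratification (a linear torus action) manifestly satisfies the frontier condition, and then glue via Luna's slice theorem.

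Here is the route I would actually write down, to keep things self-contained and avoid quoting a black box about torus quotients. Step 1: reduce to the frontier condition for $Q$ via Theorem~\ref{9up121d4yh}. Step 2: show that for $p \in X$ polystable with $(T\times T)_p = L$, Luna's slice theorem gives a $(T\times T)$-saturated neighborhood of the orbit $(T\times T)\cdot p$ in $X$ isomorphic to $(T\times T)\times_{L} S$ for an $L$-stable locally closed slice $S\ni p$, and correspondingly a neighborhood of $[p]$ in $Q$ isomorphic to $S\sll{} L$; moreover $S$ embeds $L$-equivariantly and étale-locally into the $L$-module $T_pS$ (here $L$ is again a torus). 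Step 3: for a linear action of a torus $L$ on a vector space $V$, check directly — this is the combinatorial heart — that the Luna stratification of $V\sll{} L$ satisfies the frontier condition: the strata are indexed by subtori $L'\subseteq L$ (stabilizers of polystable points), $V\sll{}L'\supseteq$ is realized inside $V\sll{}L$, and $\overline{(V\sll{}L)_{L_1}}\supseteq (V\sll{}L)_{L_2}$ iff $L_1\supseteq L_2$, which one sees by scaling a vector in the larger fixed space toward one in the smaller using a suitable one-parameter subgroup of $L$ and noting that polystability is preserved along the closed orbit. Step 4: transfer this back through the slice to conclude that locally around each point $Q$'s orbit-type partition satisfies the frontier condition, then use that the partition is locally finite (the local condition) to upgrade the local statement to the global frontier condition: if $S\cap\overline{T}\ne\emptyset$ pick $q$ in the intersection, work in a slice neighborhood of $q$, and deduce $S\subseteq\overline{T}$ on that neighborhood, then propagate using connectedness of strata and local finiteness. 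The main obstacle I anticipate is Step 3 together with the bookkeeping in Step 2: one must be careful that the slice $S$ is genuinely modeled on a torus module (so that the stabilizer subgroups appearing really are subtori, which requires knowing $L$ is connected — true here because $T_K$ is a torus so $T$ is connected, though one should double-check whether \emph{finite} stabilizer subgroups can occur and, if so, that they cause no trouble since finite groups contribute no lower-dimensional fixed locus), and that the identification of Luna strata with those subtori is compatible with passing between $X$, the slice, and the ambient module. None of these steps is deep, but making the torus combinatorics airtight — especially the claim that closures of Luna strata for a linear torus action are unions of strata — is where the real content lies, and it is exactly here that the abelian hypothesis is indispensable.
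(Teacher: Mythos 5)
Your Step 1 matches the paper, and the combinatorial core you identify (polystability of a point for a torus action being governed by the convex hull of the weights appearing in it, and stabilizers being cut out by those weights) is indeed the mechanism the paper exploits. But the paper applies it \emph{globally}: it works with the $T$-weight decomposition of $\g\times\g$ under the adjoint action, identifies the stabilizers as the subgroups $Z_\Psi$ for root subsystems $\Psi$, and proves directly that $\overline{\D(\g)_\Psi}$ equals the GIT quotient of the closed subvariety $M_\Psi=M\cap(\g'_\Psi\times\g'_\Psi)$ supported on the weights in $\Psi$, which it then decomposes by orbit type to get $\overline{\D(\g)_\Psi}=\bigcup_{\chi\le\Psi}\D(\g)_\chi$ (Proposition \ref{b65xnhncei}). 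Your route via Luna slices has two genuine gaps. First, the reduction in your Step 2 to a linear torus module requires $\mu_\C^{-1}(0)$ (equivalently $T^*\cO\cap(\g\times\ft^\circ)$) to be smooth at the relevant polystable points, and it is not: the paper explicitly warns that $\mu_\C^{-1}(0)$ may be singular, and indeed the fibers of $T^*\cO\cap(\g\times\ft^\circ)\to\cO$ jump in dimension exactly over $\cO\cap\ft$, i.e.\ at the points lying over the bottom stratum. For a singular $X$, the slice $S$ is a singular $L$-variety that merely embeds as a closed subvariety of an $L$-module $V$; knowing the frontier condition for $V\sll{}L$ gives only the upper bound $\overline{S_{L_1}\sll{}L}\subseteq\bigcup_{L_2\supseteq L_1}(S\sll{}L)_{L_2}$, not the needed lower bound that every point with larger stabilizer is actually a limit of points \emph{of $S$} with stabilizer $L_1$. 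That lower bound is the original problem in miniature, so the slice reduction does not close the argument.

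Second, your Step 4 propagates the local statement to the global frontier condition ``using connectedness of strata,'' but the orbit type strata here are disconnected: $\D(\g)_\Psi$ has $|W:W_\Psi|$ components and the bottom stratum is $|W|$ isolated points. A local argument at a point of $S\cap\overline{T}$ can only show that the connected component of $S$ through that point lies in $\overline{T}$; the theorem as stated (and as the paper's Definition \ref{0w8kdfpkuc} requires) asserts that \emph{all} of $S$, including the other components, lies in $\overline{T}$. Closing this gap needs a global input --- in the paper it comes for free from the identity $\overline{\D(\g)_\Psi}=M_\Psi\ps/T$, which captures every component at once. So while your plan is not unreasonable in spirit, both the singularity of the zero level set and the disconnectedness of the strata defeat the local slice-theoretic strategy as written; the paper's global weight-space computation is what circumvents them.
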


More precisely, the proof relies on an explicit description of the orbit type partition. Let $\cO\s\g$ be a regular semisimple adjoint orbit and consider the action of $T:=(T_K)_\C$ on the cotangent bundle $T^*\cO\s\g\times\g^*$ by $t\cdot(X,\eta)=(\Ad_tX,\Ad_t^*\eta)$. Let $\ft^\circ$ be the annihilator of $\ft$ in $\g^*$. Using Theorem \ref{9up121d4yh}, we get the following description, which is well suited for computations.

\begin{proposition}\label{0ae71l8pfy}
The space $\D(\g)$ is homeomorphic to $(T^*\cO\cap(\g\times\ft^\circ))\ps/T$. Moreover, the $(T_K\times T_K)$-orbit type partition of $\D(\g)$ coincides with the $T$-orbit type partition of $(T^*\cO\cap(\g\times\ft^\circ))\ps/T$.
\end{proposition}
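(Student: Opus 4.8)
The plan is to deduce this from Theorem \ref{9up121d4yh} by making the GIT quotient $\Spec(\C[\mu_\C^{-1}(0)]^{H_\C})$ with $H = T_K\times T_K$ completely explicit. First I would identify $T^*G \cong G\times\g^*$ as in \S\ref{4vlopz5r} and, using the pairing from the admissible form to identify $\g^*\cong\g$, rewrite the complex moment map \eqref{g6ydwgujbz}. With $H_\C = T\times T$ where $T=(T_K)_\C$, the moment map condition $\mu_\C(g,\xi)=0$ becomes the pair of conditions: the $\ft$-component of $\Ad_g^*\xi$ vanishes and the $\ft$-component of $\xi$ vanishes; i.e., writing $\eta$ for $\xi$ viewed in $\g^*$, we need $\eta\in\ft^\circ$ and $\Ad_g^*\eta\in\ft^\circ$. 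So $\mu_\C^{-1}(0) = \{(g,\eta)\in G\times\g^* : \eta\in\ft^\circ,\ \Ad_g^*\eta\in\ft^\circ\}$.

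Next I would set up the map to $(T^*\cO\cap(\g\times\ft^\circ))$. The natural candidate sends $(g,\eta)\mapsto (X,\eta)$ where $X$ is a fixed element of the regular semisimple orbit $\cO$ transported by $\Ad_g$ — more precisely, pick $X_0\in\ft$ regular, and one checks that the double coset structure of $\mu_\C^{-1}(0)$ under $T\times T$ matches, via $g\mapsto \Ad_g X_0$, the $T$-action on pairs $(X,\eta)$ with $X\in\cO$, $\eta\in\ft^\circ$, and a compatibility condition ensuring $(X,\eta)\in T^*\cO$ (namely $\eta$ annihilates the tangent space $[\g,X]^\perp$... — in fact $T^*\cO\cap(\g\times\ft^\circ)$ is cut out by $X\in\cO$ and $\eta$ lying in the appropriate conormal-type subspace). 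The left $T$-factor is quotiented out by the choice of $X_0$ (its stabilizer in $G$ is the Cartan, and the residual left-$T$ acts trivially after this identification), and the right $T$-factor becomes the $T$-action $t\cdot(X,\eta) = (\Ad_t X,\Ad_t^*\eta)$. This should give a $T$-equivariant isomorphism of affine varieties $\mu_\C^{-1}(0)\sll{}(T\times T) \cong \Spec\bigl(\C[T^*\cO\cap(\g\times\ft^\circ)]^{T}\bigr)$, hence a homeomorphism $\mu_\C^{-1}(0)\ps/H_\C \cong (T^*\cO\cap(\g\times\ft^\circ))\ps/T$ on polystable loci. Combined with Theorem \ref{9up121d4yh} this gives the first claim.

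For the statement about orbit type partitions: since the above is an isomorphism of varieties intertwining the $T\times T$-action with the $T$-action (with the extra $T$ acting trivially), stabilizers correspond — the $(T\times T)$-stabilizer of a point is $\Delta\text{-related}$ to the $T$-stabilizer of its image, so conjugacy classes of stabilizers match and the Luna stratifications coincide. Then invoke the second half of Theorem \ref{9up121d4yh}, which identifies the $H$-orbit type partition of $T^*G\slll H$ with the $H_\C$-orbit type partition of $\mu_\C^{-1}(0)\ps/H_\C$, to transport this to $\D(\g)$.

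The main obstacle I anticipate is the bookkeeping in the previous-to-last step: verifying carefully that $\mu_\C^{-1}(0)$, which a priori lives in $G\times\g^*$, really is $T\times T$-equivariantly isomorphic (not just set-theoretically in bijection) to $T^*\cO\cap(\g\times\ft^\circ)$ with the left $T$ acting trivially — in particular pinning down exactly which subset of $\g\times\ft^\circ$ equals $T^*\cO\cap(\g\times\ft^\circ)$ and checking the map is an isomorphism of schemes so that taking invariants commutes as claimed. One subtlety is that $\cO$ (regular semisimple) is affine and closed in $\g$, which is what makes $T^*\cO$ behave well; another is confirming that the residual left-$T$-action is genuinely trivial on the quotient rather than merely on the chosen slice. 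Once the equivariant isomorphism is nailed down, matching polystable loci and orbit types is formal.
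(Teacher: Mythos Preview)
Your overall strategy matches the paper's: apply Theorem \ref{9up121d4yh} to get $\D(G)=\mu_\C^{-1}(0)\ps/(T\times T)$, compute $\mu_\C^{-1}(0)=\{(g,\xi)\in G\times\ft^\circ:\Ad_g^*\xi\in\ft^\circ\}$, and then absorb one of the two $T$-factors into the passage to $T^*\cO$. The orbit-type statement then follows because that factor acts freely.

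However, the specific map you propose does not do the job, and this is exactly the bookkeeping you flagged. With your map $(g,\eta)\mapsto(\Ad_gX_0,\eta)$, neither $T$-factor is quotiented out: for $b\in T$ on the right, $(gb^{-1},\Ad_b^*\eta)\mapsto(\Ad_gX_0,\Ad_b^*\eta)$, while for $a\in T$ on the left, $(ag,\eta)\mapsto(\Ad_a\Ad_gX_0,\eta)$. So the left $T$ is \emph{not} trivial on the image, contrary to your claim, and the right $T$ does not become the diagonal adjoint action. The paper's map is $(g,\xi)\mapsto(\Ad_g\tau,\Ad_g^*\xi)$, which realises $G\times_T\ft^\circ\cong T^*\cO$ (the associated bundle of the principal $T$-bundle $G\to\cO$, $g\mapsto\Ad_g\tau$, for the \emph{right} $T$-action). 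Under this map the right $T$ acts trivially on the image (since $(gb^{-1},\Ad_b^*\xi)\mapsto(\Ad_g\tau,\Ad_g^*\xi)$), and the left $T$ becomes exactly $t\cdot(X,\eta)=(\Ad_tX,\Ad_t^*\eta)$. The condition $\Ad_g^*\xi\in\ft^\circ$ survives as the intersection with $\g\times\ft^\circ$. Once you replace your map with this one and swap the roles of the two $T$-factors, the rest of your argument goes through and coincides with the paper's; the freeness of the right $T$-action on $G\times\ft^\circ$ gives the matching of orbit-type partitions without further work.
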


In particular, the orbit type strata of $\D(\g)$ are of the form $(T^*\cO\cap(\g\times\ft^\circ))\ps_{Z}/T$ for some closed subgroups $Z$ of $T$ (we use the subscript $Z$ instead of $(Z)$ since $T$ is abelian). Let $\Phi$ be the root system of $\g$ and view it as a subset of $\Hom(T,\C^*)$. For every root subsystem $\Psi\le\Phi$, let

\begin{equation}\label{6hygdax5ph}
\begin{aligned}
Z_\Psi &:= \{t\in T:\alpha(t)=1\text{ for all }\alpha\in\Psi\} \\
\D(\g)_\Psi &:= (T^*\cO\cap(\g\times\ft^\circ))\ps_{Z_\Psi}/T.
\end{aligned}
\end{equation}

\begin{theorem}\label{zt3pxhz7ur}
The map
\[
\{\text{root subsystems of }\Phi\}\too\{\text{orbit type strata of }\D(\g)\},\quad\Psi\mtoo\D(\g)_\Psi
\]
is an isomorphism of posets.
\end{theorem}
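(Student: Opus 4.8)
The plan is to exhibit an explicit inverse to the map $\Psi \mapsto \D(\g)_\Psi$ and then check that both directions are order-preserving. By Proposition \ref{0ae71l8pfy}, every orbit type stratum is of the form $(T^*\cO \cap (\g \times \ft^\circ))\ps_Z / T$ for some closed subgroup $Z \le T$ arising as a stabilizer of a polystable point. So the first step is to identify which subgroups $Z$ actually occur. Given a polystable point $(X,\eta) \in T^*\cO \cap (\g \times \ft^\circ)$, I would compute its stabilizer in $T$ directly from the weight decomposition: writing $X = X_0 + \sum_{\alpha \in \Phi} X_\alpha$ and $\eta = \sum_{\alpha \in \Phi} \eta_\alpha$ relative to the root space decomposition (using $\eta \in \ft^\circ$), the condition $t \cdot (X,\eta) = (X,\eta)$ becomes $\alpha(t) = 1$ for every $\alpha$ with $X_\alpha \ne 0$ or $\eta_\alpha \ne 0$. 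Hence the stabilizer is exactly $Z_S$ for $S := \{\alpha : X_\alpha \ne 0 \text{ or } \eta_\alpha \ne 0\}$. The key claim is then that the set $S$ so obtained is always a root subsystem when $(X,\eta)$ is polystable, and conversely that every root subsystem $\Psi$ is realized. For the latter one writes down a model point inside $\g_\Psi$ (e.g. using a regular semisimple element of $\cO$ together with root vectors spanning $\Psi$) and checks its $T$-orbit is closed; this is where Proposition \ref{utfyudsumb} identifying $\g_\Psi$ as a genuine semisimple Lie algebra with root system $\Psi$ is used. For the former, the main subtlety is that $S$ as defined from a single point need not be symmetric or closed under addition, but the polystability condition (closed $T$-orbit) forces it to be: I expect this is the heart of the argument.

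More precisely, to see $S$ is a root subsystem, I would argue as follows. Closure of the $T$-orbit under the $\C^*$-subgroups $\lambda_\alpha$ coming from coroots forces, by the Hilbert--Mumford criterion applied to each cocharacter, that the weights appearing in $(X,\eta)$ are balanced in a way that prevents $S$ from being a proper "half" of a root string; this yields both the symmetry $\alpha \in S \Leftrightarrow -\alpha \in S$ and the additive closure $\alpha,\beta \in S$, $\alpha+\beta \in \Phi \Rightarrow \alpha+\beta \in S$ (the latter because if $\alpha + \beta$ were missing, a suitable one-parameter subgroup would degenerate the orbit). Equivalently, one shows $(\Span_\Z S) \cap \Phi = S$. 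Alternatively — and perhaps more cleanly — one observes that for a polystable point the pair $(X,\eta)$ can be conjugated by $T$ (or rather one notes $\g_S := \ft \oplus \bigoplus_{\alpha \in S}\g_\alpha$ must be a subalgebra, since $[X,\eta]$-type brackets and the moment map constraints live inside it), and a $\ft$-invariant subalgebra of $\g$ of this shape is exactly $\g_\Psi$ for a root subsystem $\Psi = S$ by the Proposition in \S\ref{nj5e2b9c36}. Either route establishes that the assignment $Z \mapsto \Psi$ with $Z = Z_\Psi$ is well-defined on the strata, and since $\Psi \mapsto Z_\Psi$ is clearly injective (the roots are a spanning set of characters, so $Z_{\Psi_1} = Z_{\Psi_2}$ forces $\Span_\R \Psi_1 = \Span_\R \Psi_2$, hence $\Psi_1 = \Psi_2$ by the root subsystem condition), the map $\Psi \mapsto \D(\g)_\Psi$ is a bijection onto the strata.

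It remains to check it is an isomorphism of posets, i.e. $\Psi_1 \le \Psi_2 \iff \D(\g)_{\Psi_1} \le \D(\g)_{\Psi_2}$, where the right-hand order is $\D(\g)_{\Psi_1} \subseteq \overline{\D(\g)_{\Psi_2}}$. For the forward direction: if $\Psi_1 \subseteq \Psi_2$ then $Z_{\Psi_2} \subseteq Z_{\Psi_1}$, and a point with stabilizer $Z_{\Psi_1}$ lies in the closure of the locus with stabilizer $Z_{\Psi_2}$ because one can deform a model point in $\g_{\Psi_1}$ by turning on root vectors for $\Psi_2 \setminus \Psi_1$ with a parameter $s$ and letting $s \to 0$; one must verify this deformation stays inside $T^*\cO \cap (\g \times \ft^\circ)\ps$ and that the limit is (up to $T$) the chosen model for $\D(\g)_{\Psi_1}$, which again uses the weight decomposition and closedness of $T$-orbits. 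For the reverse direction: if $\D(\g)_{\Psi_1} \subseteq \overline{\D(\g)_{\Psi_2}}$, take a point $(X,\eta)$ with stabilizer exactly $Z_{\Psi_1}$ that is a limit of points $(X^{(n)},\eta^{(n)})$ with stabilizer $Z_{\Psi_2}$; semicontinuity of the support $S(X,\eta) = \{\alpha : X_\alpha \ne 0 \text{ or } \eta_\alpha \ne 0\}$ under limits (the support can only drop) gives $\Psi_1 = S(X,\eta) \subseteq S(X^{(n)},\eta^{(n)}) = \Psi_2$ for $n$ large. The main obstacle I anticipate is the first part — rigorously showing that polystability forces the support $S$ to be a root subsystem and that the closure relations on strata match the combinatorial ones — since this is exactly where one must translate the analytic/GIT content of polystability (closed orbits, and the identification of $\overline{\D(\g)_\Psi}$) into the purely combinatorial language of root subsystems; the poset-isomorphism bookkeeping, by contrast, should be routine once the dictionary is in place.
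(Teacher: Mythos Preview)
There are two genuine gaps in your plan.

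First, and most seriously, your ``key claim'' that the support $S = \{\alpha : X_\alpha \ne 0 \text{ or } \eta_\alpha \ne 0\}$ of a polystable point is itself a root subsystem is \emph{false}. Take $\g = \fsl(3,\C)$ with simple roots $\alpha_1,\alpha_2$, put $X = \tau \in \ft\reg$ and $\eta = e_{\alpha_1} + e_{-\alpha_1} + e_{\alpha_2} + e_{-\alpha_2}$ with all root vectors nonzero. Then $(\tau,\eta) \in T\cO \cap (\g \times \ft^\perp)$ is polystable (its weight set $\{0,\pm\alpha_1,\pm\alpha_2\}$ has $0$ in the interior of its convex hull), yet $S = \{\pm\alpha_1,\pm\alpha_2\}$ is not a root subsystem since $\alpha_1+\alpha_2 \in \Phi \setminus S$. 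Neither of your proposed routes can work here: nothing in polystability prevents the root vector for $\alpha_1+\alpha_2$ from vanishing, and $\ft \oplus \bigoplus_{\alpha\in S}\g_\alpha$ is not closed under brackets. The paper sidesteps this entirely (Lemma~\ref{6n1bilpsfe}) by observing that the stabilizer $Z_S$ is automatically equal to $Z_{\Psi}$ for $\Psi := \Phi \cap \Span_\Z S$, since $\alpha(t) = 1$ for all $\alpha \in S$ forces $\beta(t) = 1$ for every $\beta \in \Span_\Z S$; and $\Phi \cap \Span_\Z S$ is trivially a root subsystem for \emph{any} subset $S$, polystable or not. The hard step you anticipated does not exist.

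Second, your injectivity argument is also wrong: $Z_{\Psi_1} = Z_{\Psi_2}$ does imply $\Span_\R \Psi_1 = \Span_\R \Psi_2$ (compare Lie algebras), but this does \emph{not} give $\Psi_1 = \Psi_2$. In $B_2$ the long roots form a root subsystem $\Psi_1$ of type $A_1^2$ with $\Span_\R \Psi_1 = \Span_\R \Phi$, yet $\Psi_1 \ne \Phi$. The paper's Lemma~\ref{x8jufuo3f0} uses an integrality argument instead: since $Z_\Psi$ is cut out by characters rather than their differentials, one shows that any root $\alpha$ with $\alpha|_{Z_\Psi} \equiv 1$ must already lie in $\Span_\Z \Psi$, hence in $\Psi$. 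With these two lemmas in hand, the paper then proves the closure formula $\overline{\D(\g)_\Psi} = \bigcup_{\chi \le \Psi}\D(\g)_\chi$ in one stroke (Proposition~\ref{b65xnhncei}) by identifying $\overline{\D(\g)_\Psi}$ with the GIT quotient $M_\Psi\ps/T$, where $M_\Psi = M \cap (\g'_\Psi \times \g'_\Psi)$ and $\g'_\Psi = \ft \oplus \bigoplus_{\alpha\in\Psi}\g_\alpha$; the poset isomorphism and the frontier condition are then immediate, without any deformation or semicontinuity arguments.
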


In particular, there is a top stratum $\D(\g)_{\mathrm{top}}:=\D(\g)_\Phi$ corresponding to the root system $\Phi$ itself and a bottom stratum $\D(\g)_{\mathrm{bottom}}:=\D(\g)_\emptyset$ corresponding to the trivial root subsystem $\emptyset$. Recall from \S\ref{nj5e2b9c36} that any admissible bilinear form on $\g$ remains admissible on $\g_\Psi$. Hence, it induces a natural hyperk\"ahler structure on $\D(\g_\Psi)_{\mathrm{top}}$. Let $W$ be the Weyl group of $\g$ and $W_\Psi$ the Weyl group of $\g_\Psi$.

\begin{theorem}\label{6bfrjygjvw}\
\begin{itemize}
\item[$(1)$] $\D(\g)_{\mathrm{top}}$ is a connected open dense subset of real dimension $4(\dim\g-2\rk\g)$.
\item[$(2)$] $\D(\g)_{\mathrm{bottom}}$ is a finite set of $|W|$ elements. 
\item[$(3)$] For all $\Psi\le\Phi$, the stratum $\D(\g)_\Psi$ is isomorphic as a hyperk\"ahler manifold to a disjoint union of $|W:W_\Psi|$ copies of $\D(\g_\Psi)_{\mathrm{top}}$.
\end{itemize}
\end{theorem}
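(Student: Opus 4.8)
The plan is to reduce part~(3) to an explicit computation inside the affine variety $V:=T^*\cO\cap(\g\times\ft^\circ)$ with its $T$-action (Proposition~\ref{0ae71l8pfy}), and to deduce (1) and (2) from it. Fix $\Psi\le\Phi$, let $\mathfrak{l}_\Psi:=\ft\oplus\bigoplus_{\alpha\in\Psi}\g_\alpha$ be the associated Levi subalgebra, write $\ft=\mathfrak{z}_\Psi\oplus\ft_\Psi$ with $\mathfrak{z}_\Psi=\{H\in\ft:\alpha(H)=0\ \forall\alpha\in\Psi\}$ the centre of $\mathfrak{l}_\Psi$, so that $\mathfrak{l}_\Psi=\mathfrak{z}_\Psi\oplus\g_\Psi$, and split $T=T_\mathfrak{z}\cdot T_\Psi$ accordingly, where $T_\mathfrak{z}$ acts trivially on $\mathfrak{l}_\Psi$ and $T_\Psi\s T$ is a maximal torus of $\g_\Psi$. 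Identify $\g\cong\g^*$ and $\g_\Psi\cong\g_\Psi^*$ compatibly via an admissible form (Proposition~\ref{utfyudsumb}). Since $Z_\Psi$ acts diagonally on $\g=\ft\oplus\bigoplus_\alpha\g_\alpha$ and $\{\alpha\in\Phi:\alpha|_{Z_\Psi}=1\}=\Phi\cap\Span_\Z\Psi=\Psi$, a point $(X,Y)\in V$ satisfies $Z_\Psi\s\mathrm{Stab}_T(X,Y)$ exactly when $X\in\mathfrak{l}_\Psi$ and $Y\in\bigoplus_{\alpha\in\Psi}\g_\alpha\s\g_\Psi$; a dimension count (using that $\g=\mathfrak{l}_\Psi\oplus\bigoplus_{\alpha\notin\Psi}\g_\alpha$ is $\mathrm{ad}_X$-stable and $\dim\g_X=\rk\g=\rk\mathfrak{l}_\Psi$) then forces $\g_X\s\mathfrak{l}_\Psi$, so writing $X=c+X'$ with $c\in\mathfrak{z}_\Psi$ and $X'\in\g_\Psi$ one gets that $X'$ is regular semisimple in $\g_\Psi$, $\g_X=\mathfrak{z}_\Psi\oplus(\g_\Psi)_{X'}$, and the cotangent-bundle condition $Y\in[\g,X]$ becomes $Y\in[\g_\Psi,X']$. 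Next, every point of $\cO\cap\mathfrak{l}_\Psi$ is regular semisimple in $\mathfrak{l}_\Psi$, hence conjugate into $\ft$, so $\cO\cap\mathfrak{l}_\Psi$ is the disjoint union of the closed $L_\Psi$-orbits through $wX_0$ (for a fixed regular $X_0\in\ft$ with $\cO=G\cdot X_0$), indexed by $W_\Psi\backslash W$ — that is, $|W:W_\Psi|$ orbits, which are its connected components; on the one through $wX_0$ the $\mathfrak{z}_\Psi$-component of $X$ is a constant $c_w$ while the $\g_\Psi$-component ranges over a single regular semisimple orbit $\cO_\Psi^{(w)}\s\g_\Psi$.

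On that component, the map $\Theta_w\colon(X,Y)\mapsto(X',Y)$ is an algebraic bijection onto $T^*\cO_\Psi^{(w)}\cap(\g_\Psi\times\ft_\Psi^\circ)$ with algebraic inverse $(X',Y)\mapsto(c_w+X',Y)$; since $T_\mathfrak{z}$ acts trivially, $T$-orbits correspond to $T_\Psi$-orbits sitting inside the closed subspace $\g_\Psi\times\g_\Psi^*$, the polystable loci match, and $\mathrm{Stab}_T(X,Y)=T_\mathfrak{z}\cdot\mathrm{Stab}_{T_\Psi}(X',Y)$ with $Z_\Psi=T_\mathfrak{z}\cdot Z^\Psi$, so $\mathrm{Stab}_T(X,Y)=Z_\Psi$ iff $\mathrm{Stab}_{T_\Psi}(X',Y)=Z^\Psi$, the generic isotropy group of $\D(\g_\Psi)$. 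Passing to quotients — and using that $\D(\g_\Psi)_{\mathrm{top}}$ is independent of the regular semisimple orbit chosen to compute it (Proposition~\ref{0ae71l8pfy} for $\g_\Psi$) — the $\Theta_w$ assemble to a homeomorphism $\D(\g)_\Psi\to\bigsqcup_{W_\Psi\backslash W}\D(\g_\Psi)_{\mathrm{top}}$, a biholomorphism in the complex structure $I_1$ since it and its inverse are algebraic. Part~(2) is then the case $\Psi=\emptyset$, where $\g_\emptyset=0$, $\D(0)_{\mathrm{top}}$ is a point, and $|W:W_\emptyset|=|W|$.

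For part~(1): $\D(\g)_{\mathrm{top}}=\D(\g)_\Phi$ is the greatest element of the stratification poset (Theorem~\ref{zt3pxhz7ur}), hence open and dense by the frontier condition (Theorem~\ref{2rzpnjamfd}). To see it is connected I would prove $\D(\g)$ irreducible, which by Theorem~\ref{9up121d4yh} and Proposition~\ref{0ae71l8pfy} reduces to irreducibility of $V$: projecting $V\to\cO$, the fibre $[\g,X]\cap\ft^\perp$ has the expected dimension $\dim\g-2\rk\g$ on the nonempty open set $\{\g_X\cap\ft=0\}$, giving an irreducible open piece of dimension $2\dim\g-3\rk\g$, while over the complement — a union of sets $\cO\cap\mathfrak{l}$ over proper Levis $\mathfrak{l}=\ft\oplus\bigoplus_{\alpha\in\Theta}\g_\alpha$, where the total dimension is $|\Theta|+\dim\g-\rk\g-\rk\g_\Theta$ — the bound $|\Theta|-\rk\g_\Theta<|\Phi|-\rk\g$ for $\Theta\subsetneq\Phi$ (deleting a root pair drops the number of roots by $2$ and the rank by at most $1$) keeps the dimension strictly below $2\dim\g-3\rk\g$. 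Then $\D(\g)_{\mathrm{top}}$ is a nonempty Zariski-open subset of the irreducible affine variety $\D(\g)$, hence connected, of complex dimension $\dim V-\dim T+\dim Z_\Phi=2\dim\g-4\rk\g$ (as $Z_\Phi$ is finite), i.e.\ real dimension $4(\dim\g-2\rk\g)$.

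It remains to upgrade the homeomorphism of~(3) to a hyperk\"ahler isomorphism, which I would do by symmetry: the whole construction is equivariant for the $\SO(3)$ that rotates the complex structures of $T^*G$ and commutes with the $K\times K$-action, and by Theorem~\ref{9up121d4yh} it applies in every complex structure of the $2$-sphere; thus $\Theta$ intertwines each $I_a$ and, being the symplectic reduction of the projection $\g\to\g_\Psi$ (which respects the holomorphic-symplectic forms), also each $\omega_2^{(a)}+i\omega_3^{(a)}$ — and since two of these recover all of $\omega_1,\omega_2,\omega_3$ while every $I_a$ is preserved, $\Theta$ is an isometry, hence a hyperk\"ahler isomorphism. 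I expect the main obstacle to be twofold: the bookkeeping in the core step — matching polystable loci and especially stabilizer subgroups across $\cO\cap\mathfrak{l}_\Psi=\bigsqcup_{W_\Psi\backslash W}L_\Psi\cdot(wX_0)$ while handling the finite overlap $T_\mathfrak{z}\cap T_\Psi\s Z^\Psi$ — and making the last step rigorous, which ultimately rests on the fact that a solution of Nahm's equations valued in $\fk_\Psi$ is one valued in $\fk$, realizing $\Theta$ (up to the translations by $W$) as induced by a hyperk\"ahler embedding $T^*G_\Psi\hookrightarrow T^*G$.
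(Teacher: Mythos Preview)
Your algebraic core for (3) is essentially the paper's: both identify the $Z_\Psi$-fixed locus in $V$ as $T(\cO\cap\mathfrak{l}_\Psi)\cap(\g\times\ft^\perp)$, split $\cO\cap\mathfrak{l}_\Psi$ into $|W:W_\Psi|$ closed orbits $\zeta+\cO_\Psi$ (the paper's Lemma~\ref{4b4w3eltv5}), and match each with $\D(\g_\Psi)_{\mathrm{top}}$ via the translation $(X,Y)\mapsto(X-\zeta,Y)$. Part~(2) is then the case $\Psi=\emptyset$, exactly as you say.

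The genuine gap is in upgrading (3) from an $I_1$-biholomorphism to a hyperk\"ahler isomorphism. Your $\SO(3)$ argument does not work as written: $\Theta_w$ is only defined on the $I_1$-algebraic model $T^*\cO\cap(\g\times\ft^\circ)$, and the $\SO(3)$-action on the quotient does not transport $\Theta_w$ to the analogous map in another complex structure --- there is nothing forcing the resulting self-maps of $\D(\g)$ to agree, so you cannot conclude $\Theta_w$ intertwines $I_2,I_3$ or is an isometry. You correctly identify the fix in your closing sentence, and that is precisely what the paper does: it first proves a general lemma (Lemma~\ref{zkpb6ssaaq}) that an equivariant hyperk\"ahler map between tri-Hamiltonian spaces respecting moment-map zero-sets descends to hyperk\"ahler maps on orbit-type strata of the quotients; then, since a Nahm solution in $\fk_\Psi$ is a Nahm solution in $\fk$, the inclusion $G_\Psi\times\g_\Psi\hookrightarrow G\times\g$ is hyperk\"ahler, and composing with right-multiplication by a representative $k\in N_K(T_K)$ of the needed Weyl element (hyperk\"ahler because $K\times K$ acts by isometries) gives a map whose descent is your $\Theta_w^{-1}$. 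Working upstairs is what makes the hyperk\"ahler claim tractable.

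For (1), your irreducibility argument is more elaborate than necessary and is missing a step: an irreducible open piece of top dimension together with a dimension bound on its complement does not by itself rule out extra low-dimensional components. You should add that $V$ is cut out of the smooth variety $T^*\cO$ by $\rk\g$ linear equations, so every component has dimension at least $2\dim\g-3\rk\g$; combined with your upper bound on the complement, this forces every component to meet your open set and hence equal its closure. The paper instead argues more directly: $\D(\g)_{\mathrm{top}}$ is Zariski-open in $\D(\g)$ by Proposition~\ref{b65xnhncei}, and $\D(\g)$, being a GIT quotient of the connected space $T\cO\cap(\g\times\ft^\perp)$, is connected. The dimension count is the same as yours.
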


A stratification can always be refined arbitrarily by taking submanifolds of the strata. It is thus desirable to get a stratification as coarse as possible. We describe next one way of coarsening the orbit type stratification of $\D(\g)$. Recall from \S\ref{nj5e2b9c36} that $\mathcal{C}_\g$ is the set of conjugacy classes of regular semisimple subalgebras of $\g$ and $m_\g:\mathcal{C}_\g\to\N$ the map of embedding numbers \eqref{t9mcd8c0dw}. For each $[\h]\in\mathcal{C}_\g$, let
\[
\D(\g)_{[\h]} := \bigcup_{[\g_\Psi]=[\h]}\D(\g)_\Psi.
\]

\begin{theorem}\label{xvislcts8s}
The partition $\cP=\{\D(\g)_{[\h]}:[\h]\in\mathcal{C}_\g\}$ is a stratification of $\D(\g)$ and the map $\mathcal{C}_\g\to\cP$, $[\h]\mto\D(\g)_{[\h]}$ an isomorphism of posets. Moreover, $\D(\g)_{[\g]}=\D(\g)_{\mathrm{top}}$, $\D(\g)_{[0]}=\D(\g)_{\mathrm{bottom}}$ and $\D(\g)_{[\h]}$ is isomorphic as a hyperk\"ahler manifold to a disjoint union of $m_\g(\h)$ copies of $\D(\h)_{\mathrm{top}}$. 
\end{theorem}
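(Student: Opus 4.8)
The plan is to reduce everything to results already established for the orbit type partition—Theorems \ref{zt3pxhz7ur}, \ref{2rzpnjamfd} and \ref{6bfrjygjvw}—together with the combinatorics of the $W$-action on root subsystems and, at one point, the classification of regular subalgebras. First I would fix $[\h]\in\mathcal{C}_\g$ and a representative $\h=\g_{\Psi_0}$ regular with respect to $\ft$, and use Proposition \ref{cv6qokpmij} to identify $\{\Psi : [\g_\Psi]=[\h]\}$ with the single $W$-orbit $W\cdot\Psi_0$, so that $\D(\g)_{[\h]}=\bigsqcup_{\Psi\in W\cdot\Psi_0}\D(\g)_\Psi$ is a union of orbit type strata indexed by one $W$-orbit. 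In particular the sets $\D(\g)_{[\h]}$ are pairwise disjoint and, by Theorem \ref{zt3pxhz7ur}, cover $\D(\g)$, so $\cP$ is a partition and $[\h]\mapsto\D(\g)_{[\h]}$ is a bijection $\mathcal{C}_\g\to\cP$; and since $\Phi$ and $\emptyset$ are $W$-fixed, $\D(\g)_{[\g]}=\D(\g)_\Phi=\D(\g)_{\mathrm{top}}$ and $\D(\g)_{[0]}=\D(\g)_\emptyset=\D(\g)_{\mathrm{bottom}}$.

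Next I would establish the manifold description and the local condition. For $\Psi=w\cdot\Psi_0$ one has $W_\Psi=wW_{\Psi_0}w^{-1}$, hence $|W:W_\Psi|=|W:W_{\Psi_0}|$, and $\g_\Psi$ is carried onto $\h$ by an inner automorphism of $\g$, which preserves the admissible form and is thus an isomorphism of pairs (Lie algebra, admissible form), inducing a hyperk\"ahler isomorphism $\D(\g_\Psi)_{\mathrm{top}}\cong\D(\h)_{\mathrm{top}}$ (cf.\ Theorem \ref{qaqfm3dpar}). Combining this with Theorem \ref{6bfrjygjvw}(3) and summing over the orbit, $\D(\g)_{[\h]}$ becomes a disjoint union of $|W\cdot\Psi_0|\cdot|W:W_{\Psi_0}|=m_\g(\h)$ copies of $\D(\h)_{\mathrm{top}}$, by the definition \eqref{t9mcd8c0dw} of the embedding number. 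This is a manifold of pure dimension $4(\dim\h-2\rk\h)$, giving the manifold condition; local finiteness is immediate since $\mathcal{C}_\g$ is finite.

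For the frontier condition, the local closedness of the strata and the order isomorphism, I would pass through an auxiliary combinatorial order on $\mathcal{C}_\g$. By Theorem \ref{2rzpnjamfd} and the poset isomorphism of Theorem \ref{zt3pxhz7ur} one has $\overline{\D(\g)_\Psi}=\bigcup_{\Psi'\le\Psi}\D(\g)_{\Psi'}$; taking the union over $\Psi\in W\cdot\Psi_0$ and noting that $\{\Psi' : \Psi'\le w\cdot\Psi_0\text{ for some }w\in W\}$ is $W$-stable, one obtains
\[
\overline{\D(\g)_{[\h]}}=\bigsqcup_{[\h']\preceq[\h]}\D(\g)_{[\h']},
\]
where $[\h']\preceq[\h]$ means that some $W$-translate of a root subsystem representing $[\h']$ is contained in one representing $[\h]$. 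A short check shows $\preceq$ is a partial order: reflexivity and transitivity are clear, and antisymmetry follows from Proposition \ref{cv6qokpmij} since mutual containments force the two root subsystems to have the same cardinality. The displayed formula, together with the fact that $\cP$ is a partition, then yields at once the frontier condition for $\cP$, the identity $\D(\g)_{[\h]}=\overline{\D(\g)_{[\h]}}\setminus\bigcup_{[\h']\prec[\h]}\overline{\D(\g)_{[\h']}}$ (a difference of a closed set and a finite union of closed sets, hence $\D(\g)_{[\h]}$ is open in its closure and so locally closed), and the statement that $[\h]\mapsto\D(\g)_{[\h]}$ is an isomorphism of posets $(\mathcal{C}_\g,\preceq)\to(\cP,\le)$.

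The one genuinely non-formal step—and the main obstacle—is to identify $\preceq$ with the inclusion order on $\mathcal{C}_\g$ defined in Section \ref{nj5e2b9c36}, i.e.\ to show that an inner automorphism of $\g$ carries $\h_1$ into $\h_2$ exactly when some $W$-translate of a root subsystem representing $[\h_1]$ lies in one representing $[\h_2]$. The implication from $\Psi_1\le\Psi_2$ to $\g_{\Psi_1}\s\g_{\Psi_2}$ is trivial; the converse requires the structure theory of regular semisimple subalgebras (\cite[Chapter 6]{on94}, cf.\ \cite{dyn72})—after conjugating $\h_1,\h_2$ to be regular with respect to a common Cartan subalgebra, an inner automorphism taking $\h_1$ into $\h_2$ can be replaced by an element of the Weyl group. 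With this identification the theorem follows, everything else being bookkeeping with Theorem \ref{6bfrjygjvw} and the already-known stratification of the orbit type partition.
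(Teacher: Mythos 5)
Your proposal follows essentially the same route as the paper: decompose $\D(\g)_{[\h]}$ over the $W$-orbit of $\Psi_0$ via Proposition \ref{cv6qokpmij}, compute its closure from the orbit-type closure formula of Theorem \ref{zt3pxhz7ur} (Proposition \ref{b65xnhncei}), and read off the frontier condition, local closedness and the poset isomorphism; the count $m_\g(\h)$ and the identification of the components with $\D(\h)_{\mathrm{top}}$ come from Theorem \ref{6bfrjygjvw}(3) exactly as you describe. You are in fact more explicit than the paper on the one delicate point, namely that the inner-automorphism order on $\mathcal{C}_\g$ agrees with your combinatorial order $\preceq$; the paper absorbs this silently into the last equality of its closure computation, relying on the standard theory of regular subalgebras cited in its preliminaries, so flagging it as the non-formal step is fair.

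The one step you assert without justification is that $\D(\g)_{[\h]}=\bigsqcup_{\Psi\in W\cdot\Psi_0}\D(\g)_\Psi$ is a \emph{topological} disjoint union, i.e.\ that each orbit-type stratum is open in $\D(\g)_{[\h]}$; this is needed before you may conclude that $\D(\g)_{[\h]}$ is a manifold isomorphic to $m_\g(\h)$ copies of $\D(\h)_{\mathrm{top}}$, and the paper isolates it as a separate lemma. It follows in one line from tools you already deploy: by the closure formula, $\overline{\D(\g)_{u\cdot\Psi_0}}\cap\D(\g)_{v\cdot\Psi_0}\ne\emptyset$ forces $v\cdot\Psi_0\le u\cdot\Psi_0$, and since $W$-conjugate root subsystems have the same cardinality (your antisymmetry argument) this gives $u\cdot\Psi_0=v\cdot\Psi_0$; hence each piece is closed in $\D(\g)_{[\h]}$, and, there being finitely many, also open. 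With that line added your argument is complete.
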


We will give examples of this stratification in \S\ref{aejay646nw}.

\section{A Kempf--Ness theorem for $T^*G\slll{}H$}\label{udf7cgmv9f}

The purpose of this section is to prove Theorem \ref{9up121d4yh} relating the hyperk\"ahler quotient $T^*G\slll{}H$ with the GIT quotient $\Spec(\C[\mu_\C^{-1}(0)]^{H_\C})$.

\subsection{K\"ahler quotients with global K\"ahler potentials}

Let $G$ be a complex reductive group with maximal compact subgroup $K$ and $M$ a smooth complex affine variety on which $G$ acts algebraically.

Recall that the Kempf--Ness theorem \cite{kem79} \cite[\S6.12]{pop94} \cite[Corollary 4.7]{sch89} 	states that if $M\s\C^n$ is endowed with the standard K\"ahler structure and $G$ acts linearly on $\C^n$ and $K$ by isometries, then there is a canonical moment map $\mu$ for the action of $K$ on $M$ such that the K\"ahler quotient $\mu^{-1}(0)/K$ is homeomorphic to the GIT quotient $\Spec(\C[M]^G)$. However, in our case, the K\"ahler structures on $T^*G$ cannot be obtained from embeddings in $\C^n$, so we need a slight generalization of this theorem. 

More precisely, suppose that $M$ has a K\"ahler structure (compatible with its natural complex structure) which is induced by a global K\"ahler potential $f\in C^\infty(M,\R)$ that is $K$-invariant, proper and bounded from below. For example, if $M$ is isometrically embedded in $\C^n$, $f$ can be the norm function. Let $I$ be the complex structure of $M$ and $\omega=2i\partial\bar{\partial}f$ the K\"ahler form. For each $X\in\fk$, let $X^\#$ be the vector field on $M$ induced by the $K$-action and define
\[
\mu:M\too\fk^*,\quad \mu(p)(X)=df(IX^\#_p)
\]
for all $p\in M$ and $X\in\fk$.

\begin{proposition}\label{11c5cigbp4}
The map $\mu$ is a moment map for the action of $K$ on $M$.
\end{proposition}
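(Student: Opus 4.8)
The plan is to verify the two defining properties of a moment map: equivariance of $\mu:M\to\fk^*$ under $K$ (with respect to the coadjoint action), and the moment map identity $d\langle\mu,X\rangle = \iota_{X^\#}\omega$ for all $X\in\fk$. Since $\omega = 2i\partial\bar\partial f = dd^c f$ (with $d^c = i(\bar\partial - \partial)$, up to sign/normalization conventions), and $f$ is $K$-invariant, both properties should follow from fairly formal manipulations with the K\"ahler potential. The fact that $f$ is proper and bounded from below plays no role here — those hypotheses are needed only later for the Kempf--Ness argument — so this proposition is really just the standard "moment map from an invariant potential" lemma.

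First I would record the key identity: for the K\"ahler form $\omega = dd^c f$ one has $\iota_Y\omega = \mathcal{L}_Y(d^c f) - d(\iota_Y d^c f) = \mathcal{L}_Y(d^c f) - d((d^c f)(Y))$ by Cartan's formula, for any vector field $Y$. Take $Y = X^\#$. Since $f$ is $K$-invariant, $\mathcal{L}_{X^\#} f = 0$, and because the $K$-action is holomorphic (it preserves $I$), the Lie derivative $\mathcal{L}_{X^\#}$ commutes with $d^c$; hence $\mathcal{L}_{X^\#}(d^c f) = d^c(\mathcal{L}_{X^\#}f) = 0$. Therefore $\iota_{X^\#}\omega = -d\big((d^c f)(X^\#_p)\big)$. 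Now one computes $(d^c f)(X^\#) = (i(\bar\partial-\partial)f)(X^\#) = df(I X^\#)$ by the standard relation between $d^c$ and the complex structure acting on vectors (tracking the convention so the sign matches the definition $\mu(p)(X) = df(IX^\#_p)$). This gives $d\langle\mu,X\rangle = \iota_{X^\#}\omega$, possibly after fixing the overall sign convention for the moment map, which I would just absorb into the definition as the paper has set it up.

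Next I would check $K$-equivariance. For $a\in K$ let $\Phi_a:M\to M$ denote the action. Since the action is holomorphic and $f$ is $K$-invariant, $\Phi_a^*\omega = \omega$, and one has the standard naturality $\Phi_a^*(Y^\#) = (\mathrm{Ad}_{a^{-1}}Y)^\#$ for the induced vector fields. Combined with the moment map identity just established and the fact that $M$ is connected (or by a direct computation from the formula for $\mu$, using that $\Phi_a$ is an isometry commuting with $I$ and fixing $f$), this forces $\mu(a\cdot p) = \mathrm{Ad}_a^*\,\mu(p)$. Alternatively, and more cleanly, I would verify equivariance directly from the formula: $\mu(a\cdot p)(X) = df_{a\cdot p}\big(I X^\#_{a\cdot p}\big)$, and pulling back by $\Phi_a$ and using $\Phi_a^*f = f$, $\Phi_a\circ I = I\circ\Phi_a$ (on tangent vectors), and $(d\Phi_a)(\mathrm{Ad}_{a^{-1}}X)^\#_p = X^\#_{a\cdot p}$, this rearranges to $df_p\big(I(\mathrm{Ad}_{a^{-1}}X)^\#_p\big) = \mu(p)(\mathrm{Ad}_{a^{-1}}X) = (\mathrm{Ad}_a^*\mu(p))(X)$.

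**The main obstacle** — such as it is — is purely bookkeeping: getting all the sign and factor-of-$2$ conventions consistent among $\omega = 2i\partial\bar\partial f$, the definition of $d^c$, the identity $(d^c f)(Y) = df(IY)$, and the chosen sign convention for "moment map." None of this is deep, so I would state the computation of $\iota_{X^\#}\omega$ carefully once, note that the $K$-invariance of $f$ kills the Lie-derivative term via holomorphicity of the action, and then remark that equivariance follows either from connectedness together with the first property or by the direct substitution above. I do not expect to need properness or boundedness of $f$ anywhere in this proof.
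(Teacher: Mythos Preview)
Your proposal is correct and follows essentially the same route as the paper: write $\omega$ as the exterior derivative of the $1$-form $i(\partial-\bar\partial)f$ (your $-d^c f$), use Cartan's formula together with holomorphicity of the action and $K$-invariance of $f$ to kill the Lie derivative term, and then verify equivariance by the direct substitution with $\Phi_a$. The only difference is notational (the paper works with $\alpha=i(\partial-\bar\partial)f$ rather than $d^c f$), and the sign bookkeeping you flag is exactly what the paper's choice of $\alpha$ handles.
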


\begin{proof}
We have $\omega=-d\alpha$ where $\alpha=i(\partial-\bar{\partial})f$. Moreover, $df(IX^\#)=\alpha(X^\#)$ so $i_{X^\#}\alpha=\mu^X$ where $\mu^X(p):=\mu(p)(X)$. Now, the Lie derivative $\mathcal{L}_{X^\#}$ commutes with $\partial$ and $\bar{\partial}$ since $K$ acts by biholomorphisms, so $\mathcal{L}_{X^\#}\alpha=i(\partial-\bar{\partial})\mathcal{L}_{X^\#}f=0$, by $K$-invariance of $f$. Hence, $i_{X^\#}\omega=d(i_{X^\#}\alpha)=d\mu^X$. To prove $K$-equivariance of $\mu$, define, for all $k\in K$, the map $\psi_k:M\to M$, $p\mto k\cdot p$. Then, $df\circ I\circ d\psi_k=d(f\circ\psi_k)\circ I=df\circ I$ and $X^\#_{k\cdot p}=d\psi_k((\Ad_{k^{-1}}X)^\#_p)$, so
\[
\mu(k\cdot p)(X) = df(I(d\psi_k((\Ad_{k^{-1}}X)^\#_p)))=df(I(\Ad_{k^{-1}}X)^\#_p)=\mu(p)(\Ad_{k^{-1}}X).
\]
Hence, $\mu(k\cdot p)=\Ad_k^*\mu(p)$ for all $k\in K$ and $p\in M$.
\end{proof}

\begin{proposition}\label{p5xq2g27v0}
We have $\mu^{-1}(0)\s M\ps$ and this inclusion induces a homeomorphism $\mu^{-1}(0)/K\to M\ps/G=\Spec(\C[M]^G)$. Moreover, the $K$-orbit type partition of $\mu^{-1}(0)/K$ coincides with the $G$-orbit type partition of $M\ps/G$ and the homeomorphism restricts to biholomorphisms on the strata.
\end{proposition}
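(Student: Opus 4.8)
The plan is to follow the classical Kempf--Ness strategy, adapting it to the setting of a global Kähler potential rather than a linear action on $\C^n$. The key analytic input is that $f$ is $K$-invariant, proper, and bounded below, which will let us mimic the norm-function arguments. Throughout I would use the notion of \emph{analytic (semi)stability}: a point $p\in M$ is analytically semistable if $\overline{G\cdot p}\cap\mu^{-1}(0)\ne\emptyset$, and analytically polystable if $G\cdot p\cap\mu^{-1}(0)\ne\emptyset$. The heart of the matter is the following three facts, which I would establish in order and then assemble.

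\textbf{Step 1: The Kempf--Ness function and its convexity.} For $p\in M$, consider the function $\varphi_p:G\to\R$, $\varphi_p(g)=f(g\cdot p)$. Since $f$ is $K$-invariant, $\varphi_p$ descends to $G/K$, and on each geodesic $t\mapsto \exp(itX)\cdot p$ (for $X\in\fk$) one computes $\frac{d}{dt}\big|_0 f(\exp(itX)\cdot p)=df((iX)^\#_p)=df(IX^\#_p)=\mu(p)(X)$, using that the infinitesimal generator of $\exp(itX)$ is $IX^\#$ (here $G=K_\C$ and the complex structure implements multiplication by $i$ on the Lie algebra directions). The second derivative is $\ge 0$ because $\omega=2i\partial\bar\partial f$ is a positive $(1,1)$-form and the curve is "holomorphic" in the relevant sense; this is the standard convexity of the Kähler potential along $\exp(i\fk)$-orbits. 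Hence $\varphi_p$ is convex along these geodesics, critical points are minima, and $g\cdot p$ is a critical point of $\varphi_p$ iff $\mu(g\cdot p)=0$.

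\textbf{Step 2: Every point is analytically semistable, and the polystable points are exactly those with closed orbit.} Here properness and boundedness below of $f$ enter. Given $p$, I would minimize $\varphi_p$ over $G/K$: pick a minimizing sequence $g_n\cdot p$; since $f$ is proper and $\varphi_p$ is bounded below, the values $f(g_n\cdot p)$ are bounded, so $\{g_n\cdot p\}$ lies in a compact set and (after passing to a subsequence) converges to some $q\in\overline{G\cdot p}$; convexity shows $q$ realizes the infimum on $\overline{G\cdot p}$ and one argues $q$ can be chosen with $\mu(q)=0$ — this is where I would need the infimum to be attained \emph{within} the orbit when the orbit is closed, and attained in the closure in general, which is the delicate point (see below). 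This shows $\mu^{-1}(0)$ meets $\overline{G\cdot p}$ for every $p$, i.e. $M\ass=M$, so $M\ps=M$ and the GIT quotient is $\Spec(\C[M]^G)$. Moreover, if $G\cdot p$ is closed then the minimizer $q$ lies in $G\cdot p$ itself, giving $\mu^{-1}(0)\cap G\cdot p\ne\emptyset$; conversely if $\mu(q)=0$ then $G\cdot q$ is closed (a standard consequence: the function $\varphi_q$ has a critical point, so it is a global min, and one deduces orbit closedness by a limiting argument using convexity and properness). Thus $\mu^{-1}(0)\s M\ps$ and $\mu^{-1}(0)$ meets every closed orbit.

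\textbf{Step 3: Uniqueness up to $K$ and assembling the homeomorphism.} Two points of $\mu^{-1}(0)$ in the same $G$-orbit differ by an element of $K$: if $\mu(p)=\mu(g\cdot p)=0$ then both $e$ and $g$ are critical points of $\varphi_p$, hence both global minima; convexity along the geodesic joining the corresponding points of $G/K$ forces that geodesic to be constant, so $gK=K$ after correcting by the $K$-stabilizer, i.e. $g\in K\cdot G_p$. This gives a well-defined injection $\mu^{-1}(0)/K\hookrightarrow M\ps/G$, which is surjective by Step 2. Continuity of the map is clear; for the inverse, one uses that the map $G\times\mu^{-1}(0)\to M\ps$ is a continuous surjection and a properness/slice argument (or directly the universal property of $M\ps/G=\Spec(\C[M]^G)$ together with the fact that $\mu^{-1}(0)/K$ is the quotient by a compact group, hence Hausdorff and proper) shows the bijection is a homeomorphism. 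The statement about orbit types follows because the $K$-stabilizer of $p\in\mu^{-1}(0)$ equals the $G$-stabilizer (the complexification $G_p=(K_p)_\C$ since $K_p$ is the maximal compact of the reductive group $G_p$ and $p$ has closed orbit), so orbit-type strata correspond; and on each stratum the quotient map is a submersion on the $K$-side and the inclusion $\mu^{-1}(0)\ps_{(K_p)}\hookrightarrow M\ps_{(K_p)}$ is compatible with the complex structures, giving the biholomorphism — this last point is essentially the smooth Kähler-quotient statement of Sjamaar--Lerman applied stratum by stratum, combined with the identification of the quotient complex structure.

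\textbf{Main obstacle.} The crux is Step 2: showing the infimum of $\varphi_p$ over $G/K$ is attained (in the orbit closure in general, in the orbit itself when it is closed) and that the minimizer can be taken in $\mu^{-1}(0)$. In the linear case on $\C^n$ this uses the explicit growth of the norm squared along $\exp(i\fk)$-directions (the Kempf--Ness "moment weight" estimate); here one only has abstract properness of $f$, so I would need a replacement for the growth estimate. The natural fix is: properness of $f$ guarantees sublevel sets are compact, so any \emph{bounded} minimizing sequence subconverges; the issue is ruling out that the minimizing sequence "escapes to infinity in $G/K$ while $f$ stays bounded," which — when $G\cdot p$ is \emph{not} closed — is exactly what should happen and is harmless (the limit lands in a smaller orbit in the closure, still in $\mu^{-1}(0)$), but when $G\cdot p$ \emph{is} closed must be excluded. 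For a closed orbit, $G\cdot p$ is itself a closed affine subvariety, $f|_{G\cdot p}$ is still proper, so the minimizing sequence stays in a compact subset of $G\cdot p$ and converges there; this is where closedness of the orbit is used in an essential way. I expect the bookkeeping in this step — organized via the Kempf--Ness function on $G/K$ and its geodesic convexity — to be the only genuinely non-formal part; the rest is a transcription of the standard Kempf--Ness/Sjamaar--Lerman machinery.
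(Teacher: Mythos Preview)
Your approach is essentially correct and would work, but the paper takes a considerably more economical route. Rather than rebuilding the Kempf--Ness machinery from scratch (convexity of the Kempf--Ness function, uniqueness of minima up to $K$, the homeomorphism and orbit-type matching), the paper simply \emph{cites} Sjamaar \cite{sja95} for all of that: Sjamaar proves in general that $\mu^{-1}(0)\s M\aps$, that the inclusion descends to a homeomorphism $\mu^{-1}(0)/K\to M\aps/G$, and that the $K$- and $G$-orbit-type partitions coincide, with biholomorphisms on strata. So the only thing left to prove is $M\ass=M$, which forces $M\aps=M\ps$. For this the paper gives a short argument with no convexity and no minimizing sequences: since $G$ acts algebraically on the affine variety $M$, every orbit closure $\overline{G\cdot p_0}$ contains a \emph{closed} orbit $G\cdot q_0$; since $G\cdot q_0$ is closed in $M$ and $f$ is proper and bounded below, $f|_{G\cdot q_0}$ attains a minimum at some $g_0\cdot q_0$; at that point the differential of $g\mapsto f(g\cdot q_0)$ vanishes, and evaluating on directions $iX$ for $X\in\fk$ gives $\mu(g_0\cdot q_0)=0$. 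Your ``main obstacle'' disappears once you pass to the closed orbit first rather than minimizing over all of $G/K$; the existence of that closed orbit is a purely algebraic fact that you did not invoke.

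One genuine slip to fix: you wrote ``$M\ass=M$, so $M\ps=M$''. That is false --- $M\ass=M$ says every orbit closure meets $\mu^{-1}(0)$, not that every orbit is closed. What you need (and what the paper uses) is $M\ass=M\Longrightarrow M\aps=M\ps$: by definition $M\aps$ is the set of points of $M\ass$ whose $G$-orbit is closed in $M\ass$, so if $M\ass=M$ this is exactly the set of points with closed orbit in $M$, namely $M\ps$.
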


\begin{proof}
Let $M\ass$ be the set of $p\in M$ such that the closure of $G\cdot p$ intersects $\mu^{-1}(0)$ and let $M\aps$ be the set of $p\in M\ass$ such that $G\cdot p$ is closed in $M\ass$ (`$\textrm{a-ss}$' stands for \textit{analytically semistable} and `$\textrm{a-ps}$' stands for \textit{analytically polystable}). Then, Sjamaar showed that $\mu^{-1}(0)\s M\aps$ \cite[Proposition 2.4]{sja95} and that this inclusion descends to a homeomorphism $\mu^{-1}(0)/K\to M\aps/G$ \cite[Remark 2.6]{sja95} (see also \cite{hei94}) with the desired properties regarding orbit type partitions \cite[Theorem 2.10]{sja95}. Thus, it suffices to show that $M\ass=M$, so that $M\aps=M\ps$. This is proved by adapting the standard proof of the Kempf--Ness theorem, replacing the norm function by $f$. Let $p_0\in M$. Since $G$ acts algebraically, there is a closed orbit $G\cdot q_0$ in $\overline{G\cdot p_0}$. We claim that $G\cdot q_0$ intersects $\mu^{-1}(0)$. Since $f$ is proper and bounded from below, $f(G\cdot q_0)$ attains a minimum $f(g_0\cdot q_0)$ for some $g_0\in G$. Thus, by defining $F_p:G\to\R$, $F_p(g)=f(g\cdot p)$, we get $(dF_{q_0})_{g_0}=0$. Moreover, if $R_g:G\to G$, $R(a)=ag$ then $F_{g\cdot p}=F_p\circ R_g$, so $(dF_{g_0\cdot q_0})_1=0$. Then, for all $X\in\fk$ we have $(dF_p)_1(iX)=df(IX^\#_p)=\mu(p)(X)$, so in particular $\mu(g_0\cdot q_0)(X)=(dF_{g_0\cdot q_0})_1(iX)=0$. Hence, $\mu(g_0\cdot q_0)=0$, so $\overline{G\cdot p_0}$ intersects $\mu^{-1}(0)$.
\end{proof}

\subsection{Application to $T^*G\slll{}H$}

We want to apply Proposition \ref{p5xq2g27v0} to a certain K\"ahler potential on $T^*G$. This potential is obtained from {\bf Nahm's equations}, i.e.\ the system of ordinary differential equations
\[
\dot{T}_1 = [T_2,T_3],\quad \dot{T}_2 = [T_3,T_1],\quad\dot{T}_3 = [T_1,T_2],
\]
where the $T_i$'s take values in $\fk$. 

For each $X\in\fk^3$, let $T^X$ be the maximally extended solution to Nahm's equations with initial condition $T^X(0)=X$. Let $W$ be the open set of $X\in\fk^3$ such that $T^X$ is defined at least on $[0,1]$. Then, Dancer--Swann \cite[\S3]{dan96} showed that $W$ is star-shaped about $0$, and $K\times W$ is diffeomorphic to $T^*G$. Moreover, they observed that the function
\[
F_{12}:K\times W\too\R,\quad F_{12}(k,X) = \frac{1}{2}\int_0^1\|T_1^X(t)\|^2+\|T_2^X(t)\|^2\,dt
\]
is a moment map with respect to $\omega_3$ for a $\mathrm{U}(1)$-action fixing $I_3$ while rotating $I_1$ and $I_2$, so, by \cite[\S3(E)]{hit87}, it is a K\"ahler potential for $(I_1,\omega_1)$ and $(I_2,\omega_2)$. Similarly, the function $F_{13}$ defined by $F_{13}(k,X)=\frac{1}{2}\int_0^1\|T_1^X(t)\|^2+\|T_3^X(t)\|^2\,dt$ is a K\"ahler potential for $(I_1,\omega_1)$ and $(I_3,\omega_3)$. In particular, both $F_{12}$ and $F_{13}$ are K\"ahler potentials for $(I_1,\omega_1)$. Neither $F_{12}$ nor $F_{13}$ is proper (we have, e.g., $F_{12}^{-1}(0)\cong K\times\fk$), but we will show that their average $F:=\frac{1}{2}(F_{12}+F_{13})$ is proper. That is, we will focus on the K\"ahler potential for $(I_1,\omega_1)$ on $T^*G\cong K\times W$ defined by
\begin{equation}\label{8bodvlu0lq}
F:K\times W\too\R,\quad F(k,X)=\frac{1}{4}\int_0^12\|T_1^X(t)\|^2+\|T_2^X(t)\|^2+\|T_3^X(t)\|^2\,dt.
\end{equation}
The diffeomorphism $T^*G\cong K\times W$ intertwines the standard $K\times K$-action on $T^*G$ with the action $(a,b)\cdot(k,X)=(a k b^{-1},\Ad_{b}X)$ on $K\times W$, so $F$ is $K\times K$-invariant and bounded from below. We prove that $F$ is proper by the following three lemmas.

\begin{lemma}\label{kcltzmm4tx}
Let $T$ be a solution to Nahm's equations on an interval $I$. Then, $\|T_i\|^2$ is convex on $I$ for all $i$.
\end{lemma}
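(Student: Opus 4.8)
The plan is to show directly that $\frac{d^2}{dt^2}\|T_i\|^2\ge 0$ on $I$. Since Nahm's equations are invariant under the cyclic relabelling $T_1\mapsto T_2\mapsto T_3\mapsto T_1$, it suffices to treat $i=1$. Throughout, $\ip{\cdot}$ and $\|\cdot\|$ denote the chosen $\Ad$-invariant inner product on $\fk$, so that $\ip{[X,Y],Z}=\ip{X,[Y,Z]}=\ip{Y,[Z,X]}$ for all $X,Y,Z\in\fk$.

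First I would differentiate $\|T_1\|^2=\ip{T_1,T_1}$ and substitute $\dot T_1=[T_2,T_3]$ to get $\frac{d}{dt}\|T_1\|^2=2\ip{[T_2,T_3],T_1}$. Differentiating once more and using all three Nahm equations $\dot T_1=[T_2,T_3]$, $\dot T_2=[T_3,T_1]$, $\dot T_3=[T_1,T_2]$ gives
\[
\frac{d^2}{dt^2}\|T_1\|^2 = 2\ip{[[T_3,T_1],T_3],T_1}+2\ip{[T_2,[T_1,T_2]],T_1}+2\|[T_2,T_3]\|^2.
\]
The one step requiring care is the simplification of the two cross terms via the invariance identity: taking $X=[T_3,T_1]$, $Y=T_3$, $Z=T_1$ collapses the first to $\ip{[T_3,T_1],[T_3,T_1]}=\|[T_3,T_1]\|^2$, and taking $X=T_2$, $Y=[T_1,T_2]$, $Z=T_1$ collapses the second to $\ip{[T_1,T_2],[T_1,T_2]}=\|[T_1,T_2]\|^2$. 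Hence
\[
\frac{d^2}{dt^2}\|T_1\|^2 = 2\bigl(\|[T_2,T_3]\|^2+\|[T_3,T_1]\|^2+\|[T_1,T_2]\|^2\bigr)=2\bigl(\|\dot T_1\|^2+\|\dot T_2\|^2+\|\dot T_3\|^2\bigr)\ge 0,
\]
so $\|T_1\|^2$ is convex on $I$, and the cases $i=2,3$ follow from the cyclic symmetry.

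There is essentially no obstacle here: the whole lemma is this single computation, and the only place a sign could go wrong is in the bookkeeping of the invariance identity $\ip{[X,Y],Z}=\ip{X,[Y,Z]}$, which is precisely where the two cross terms turn into squared norms. (Incidentally, the computation shows $\frac{d^2}{dt^2}\|T_i\|^2$ is independent of $i$, so the differences $\|T_i\|^2-\|T_j\|^2$ are affine in $t$, though this will not be needed.)
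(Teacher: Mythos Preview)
Your proof is correct and is essentially the same as the paper's: both compute $\frac{d^2}{dt^2}\|T_1\|^2$ directly, substitute Nahm's equations, and use $\Ad$-invariance of the inner product to rewrite every term as a squared norm, obtaining $2(\|[T_2,T_3]\|^2+\|[T_3,T_1]\|^2+\|[T_1,T_2]\|^2)$. The only cosmetic difference is the order of operations---the paper first applies invariance to move the bracket and then substitutes $\dot T_i$, whereas you substitute first and then cycle via $\ip{[X,Y],Z}=\ip{Y,[Z,X]}$---but the argument is identical in substance.
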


\begin{proof}
We have $\frac{d}{dt}\|T_1\|^2=2\ip{T_1,\dot{T_1}}=2\ip{T_1,[T_2,T_3]}$ so
\begin{align*}
\frac{d^2}{dt^2}\|T_1\|^2 &= 2\ip{\dot{T}_1,[T_2,T_3]}+2\ip{T_1,[\dot{T}_2,T_3]+[T_2,\dot{T}_3]} \\
&= 2\ip{\dot{T}_1,[T_2,T_3]}+2\ip{[T_3,T_1],\dot{T}_2}+2\ip{[T_1,T_2],\dot{T}_3]} \\
&= 2\|[T_2,T_3]\|^2+2\|[T_3,T_1]\|^2+2\|[T_1,T_2]\|^2.
\end{align*}
Similar statements hold for $\|T_2\|^2$ and $\|T_3\|^2$.
\end{proof}

\begin{lemma}\label{sfhpewpbid}
Let $T$ be a solution to Nahm's equations on $[0,s)$. If $\int_0^{s}\|T_i\|^2dt<\infty$ for some $i$, then $T$ can be extended past $s$.
\end{lemma}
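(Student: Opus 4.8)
The plan is to show that a solution $T$ to Nahm's equations on $[0,s)$ with $\int_0^s\|T_i\|^2\,dt<\infty$ stays bounded near $s$, which then allows extension by the standard ODE theory. The key observation is Lemma \ref{kcltzmm4tx}: each $\|T_j\|^2$ is convex on $[0,s)$. A convex function on an interval is either monotone or first decreasing then increasing, so in particular $\|T_j\|^2$ is monotone near the right endpoint $s$, hence has a limit in $[0,\infty]$ as $t\to s$. First I would rule out that this limit is $+\infty$ for the index $i$ appearing in the hypothesis: if $\|T_i(t)\|^2\to\infty$, then $\|T_i\|^2$ is increasing near $s$ and bounded below away from $0$ there, so $\int_0^s\|T_i\|^2\,dt=\infty$, contradicting the hypothesis. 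Therefore $\|T_i(t)\|^2$ converges to a finite limit as $t\to s$, and in particular $T_i$ is bounded on $[0,s)$.

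Next I would bootstrap from boundedness of $T_i$ to boundedness of all three components. From the computation in Lemma \ref{kcltzmm4tx}, $\tfrac{d^2}{dt^2}\|T_j\|^2 = 2\|[T_1,T_2]\|^2+2\|[T_2,T_3]\|^2+2\|[T_3,T_1]\|^2$ is the \emph{same} for every $j$; call it $\phi(t)\ge 0$. Hence the three functions $\|T_1\|^2,\|T_2\|^2,\|T_3\|^2$ differ pairwise by affine functions of $t$ on the bounded interval $[0,s)$. Since $\|T_i\|^2$ is bounded on $[0,s)$ and affine functions are bounded on a bounded interval, $\|T_j\|^2$ is bounded on $[0,s)$ for every $j$. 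Thus $T=(T_1,T_2,T_3)$ is bounded on $[0,s)$.

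Finally, with $T$ bounded on $[0,s)$, the right-hand side of Nahm's equations is bounded there, so $\dot T$ is bounded and $T$ is uniformly continuous on $[0,s)$; therefore $\lim_{t\to s}T(t)$ exists in $\fk^3$. Setting $T(s)$ equal to this limit and invoking the local existence theorem for the ODE system with initial data $T(s)$ at time $s$ gives an extension of $T$ to an interval $[0,s+\delta)$ for some $\delta>0$, which agrees with the original solution on $[0,s)$ by uniqueness. This completes the argument.

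The only mild subtlety — and the step I would be most careful about — is the very first one: passing from ``$\int_0^s\|T_i\|^2<\infty$'' to ``$\|T_i(t)\|^2$ has a finite limit''. This is where convexity is essential; without it one could a priori imagine $\|T_i\|^2$ oscillating, but convexity forces eventual monotonicity near $s$, and a monotone function that is integrable near $s$ cannot blow up. Everything after that is a routine consequence of the fact, already extracted in Lemma \ref{kcltzmm4tx}, that the second derivatives of the three squared norms coincide.
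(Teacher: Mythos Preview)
Your argument has a real gap at exactly the step you flagged as ``the only mild subtlety''. You claim that if $\|T_i(t)\|^2\to\infty$ then, being eventually increasing and bounded below away from $0$ near $s$, its integral over $[0,s)$ must diverge. But $[0,s)$ is a \emph{bounded} interval, so being bounded below by a positive constant near $s$ says nothing about divergence. Worse, the general principle you invoke --- ``a monotone function that is integrable near $s$ cannot blow up'' --- is false: $f(t)=(s-t)^{-1/2}$ is convex, nonnegative, monotone increasing, and satisfies $\int_0^s f(t)^2\,dt? $ no, take $\int_0^s f(t)\,dt=2\sqrt{s}<\infty$, yet $f(t)\to\infty$ as $t\to s$. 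So convexity plus finite integral of $\|T_i\|^2$ does \emph{not} give you a finite limit, and the rest of your argument (bounded $T$, hence bounded $\dot T$, hence $T(s)$ exists) does not get off the ground.

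The paper does not attempt to prove boundedness. Instead, from convexity and $\int_0^s\|T\|^2\,dt<\infty$ it extracts only the weaker estimate $\|T(t)\|^2<1/(s-t)$ near $s$ (this \emph{does} follow: if $\|T(t_0)\|^2\ge 1/(s-t_0)$ at a point where $\|T\|^2$ is nondecreasing, then $\int_{t_0}^s\|T\|^2\ge \|T(t_0)\|^2(s-t_0)\ge1$, contradicting that the tail integral tends to $0$). It then feeds this growth bound into a quantitative local-existence estimate --- the guaranteed existence time at $t_0$ is at least $b/\big(C(\|T(t_0)\|+b)^2\big)$ --- and checks that for $t_0$ close to $s$ this exceeds $s-t_0$. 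The difference in strategy is essential here: you need a quantitative lower bound on the existence time in terms of the size of the initial data, not merely qualitative boundedness.
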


\begin{proof}
Let $\Phi:\fk^3\to\fk^3$, $\Phi(X_1,X_2,X_3)=([X_2,X_3],[X_3,X_1],[X_1,X_2])$ so that Nahm's equations become $\dot{T}(t)=\Phi(T(t))$. We want to show that there exists $t_0<s$ such that the unique solution to the initial value problem $\dot{S}(t)=\Phi(S(t))$, $S(t_0)=T(t_0)$ exists on $[t_0,t_0+\e)$ for some $\e>s-t_0$. From the existence and uniqueness theorem for first-order systems of ODEs, $\e$ can be taken to be $b/M$ where $b>0$ and
\[
M=\sup\{\|\Phi(X)\|:\|X-T(t_0)\|\leq b\};
\]
see \cite[Ch.\ 1 (Theorem 2.3 and 5th paragraph of p.\ 19)]{cod55}. Note that $\Phi$ is a homogeneous polynomial of degree $2$, so there exists $C>0$ such that $\|\Phi(X)\|\le C\|X\|^2$ for all $X\in\fk^3$. Hence, $M\leq C(\|T(t_0)\|+b)^2$ and it suffices to show that there exists $t_0<s$ and $b>0$ such that
\begin{equation}\label{m7ky6qszga}
\frac{b}{C(\|T(t_0)\|+b)^2}>s-t_0.
\end{equation}
We have $\frac{d}{dt}\|T_1\|^2=2\ip{T_1,[T_2,T_3]}=2\ip{[T_1,T_2],T_3}=\frac{d}{dt}\|T_3\|^2$ and similarly $\frac{d}{dt}\|T_2\|^2\allowbreak=\frac{d}{dt}\|T_1\|^2$, so the maps $\|T_i\|^2$ differ by constants. Hence, since $\int_0^{s}\|T_i\|^2dt<\infty$ for some $i$, we also have $\int_0^{s}\|T\|^2dt<\infty$. Moreover, $\|T\|^2$ is convex (Lemma \ref{kcltzmm4tx}) so the fact that $\int_0^{s}\|T\|^2dt<\infty$ implies that there exists $\delta>0$ such that $\|T(t)\|^2<\frac{1}{s-t}$ for all $t\in(s-\delta,s)$. Let $b>C$. By taking $\delta$ small enough, we may assume that $C(1+b\sqrt{\delta})^2<b$. Then, for all $t_0\in(s-\delta,s)$ we have
\[
C(s-t_0)(\|T(t_0)\|+b)^2<C(s-t_0)\left(\frac{1}{\sqrt{s-t_0}}+b\right)^2=C\left(1+b\sqrt{s-t_0}\right)^2<b
\]
and hence \eqref{m7ky6qszga} follows.
\end{proof}

\begin{lemma}
The map $F$ given by \eqref{8bodvlu0lq} is proper.
\end{lemma}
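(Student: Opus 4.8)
The plan is to show that the sublevel sets $F^{-1}([0,c])$ are compact by controlling the two factors $K$ and $W$ in the diffeomorphism $T^*G\cong K\times W$ separately. Since $K$ is already compact, everything reduces to showing that the image of $F^{-1}([0,c])$ under the projection to $W\s\fk^3$ is bounded and closed in $\fk^3$; equivalently, that $F$ restricted to $\{1\}\times W$ is proper as a function on $W$ with the subspace topology from $\fk^3$, and that no sequence in $F^{-1}([0,c])$ can escape through the boundary of $W$ in $\fk^3$. Boundedness is the easy half: for $X\in W$, convexity of each $\|T_i^X\|^2$ on $[0,1]$ (Lemma \ref{kcltzmm4tx}) forces $\|X\|^2 = \|T^X(0)\|^2 \le 2\int_0^1\|T^X(t)\|^2\,dt$ up to an additive constant coming from the fact that the $\|T_i^X\|^2$ differ by constants (as in the proof of Lemma \ref{sfhpewpbid}), and $\int_0^1\|T^X(t)\|^2\,dt$ is precisely (a constant multiple of) $F(1,X)$; hence $F(1,X)\le c$ bounds $\|X\|$.

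The heart of the matter is closedness: I must rule out a sequence $X^{(n)}\in W$ with $F(1,X^{(n)})\le c$ converging in $\fk^3$ to some $X^{(\infty)}\in\partial W$. Here is where Lemma \ref{sfhpewpbid} does the work. Suppose $X^{(n)}\to X^{(\infty)}$. By the previous paragraph the $X^{(n)}$ are uniformly bounded, so by Lemma \ref{sfhpewpbid} — more precisely by the local existence estimate $\e\ge b/C(\|T(t_0)\|+b)^2$ used in its proof, applied at $t_0=0$ — there is a uniform $\e_0>0$ (depending only on the bound $c$) such that every $T^{X^{(n)}}$, and likewise $T^{X^{(\infty)}}$, is defined on $[0,\e_0]$. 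If $\e_0\ge 1$ we are done, since then $X^{(\infty)}\in W$. Otherwise I propagate: on $[0,\e_0]$ the solutions $T^{X^{(n)}}$ converge uniformly to $T^{X^{(\infty)}}$ by continuous dependence on initial conditions, so $\|T^{X^{(n)}}(\e_0)\|$ is bounded uniformly in $n$; then Lemma \ref{sfhpewpbid}'s estimate at $t_0=\e_0$ extends all of them, and $T^{X^{(\infty)}}$, by a further uniform amount. The obstruction to iterating this finitely many times to reach $1$ is that the extension increment $\e_0$ could shrink to zero. But the increment is controlled by $\|T^{X^{(n)}}(t_0)\|$, and convexity of $\|T^{X^{(n)}}\|^2$ together with the uniform bound $\int_0^1\|T^{X^{(n)}}\|^2\le\text{const}\cdot c$ gives a uniform bound on $\|T^{X^{(n)}}(t)\|$ on any compact subinterval of the interval of existence that stays away from its right endpoint — and in fact, rephrasing: if $T^{X^{(\infty)}}$ blew up before time $1$, say at $s\le 1$, then $\int_0^s\|T^{X^{(\infty)}}\|^2\,dt=\infty$ by Lemma \ref{sfhpewpbid}, whence by Fatou and the uniform bound on $\int_0^1\|T^{X^{(n)}}\|^2$ we'd need $T^{X^{(n)}}$ to already fail to be defined past some time slightly less than $s<1$ for large $n$ — contradicting that each $T^{X^{(n)}}$ is defined on $[0,1]$ with $\int_0^1\|T^{X^{(n)}}\|^2\le\text{const}\cdot c$.

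So the cleanest route is actually to argue directly by contradiction using Lemma \ref{sfhpewpbid} as a black box: given $X^{(n)}\to X^{(\infty)}$ with $F(1,X^{(n)})\le c$, if $X^{(\infty)}\notin W$ then $T^{X^{(\infty)}}$ is defined on a maximal interval $[0,s)$ with $s\le 1$ and, by Lemma \ref{sfhpewpbid} (contrapositive), $\int_0^s\|T^{X^{(\infty)}}_i\|^2\,dt=\infty$ for every $i$. On any $[0,s']$ with $s'<s$ the solutions converge uniformly, so $\int_0^{s'}\|T^{X^{(n)}}\|^2\,dt\to\int_0^{s'}\|T^{X^{(\infty)}}\|^2\,dt$, which tends to $\infty$ as $s'\uparrow s$; since the $\|T^{X^{(n)}}_i\|^2$ differ from each other by constants and these constants are controlled (they are $\|X^{(n)}_j\|^2-\|X^{(n)}_k\|^2$, hence bounded), we conclude $\int_0^1\|T^{X^{(n)}}\|^2\,dt\ge\int_0^{s'}\|T^{X^{(n)}}\|^2\,dt\to\infty$, contradicting $F(1,X^{(n)})\le c$. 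Therefore $X^{(\infty)}\in W$, $F^{-1}([0,c])$ is closed in $K\times\fk^3$ and bounded, hence compact, so $F$ is proper. The main obstacle in writing this up carefully is the interchange of limits — making rigorous that uniform convergence of $T^{X^{(n)}}$ on compact subintervals of $[0,s)$ is legitimate even though the $X^{(n)}$ are only known a priori to exist on $[0,1]$, not past $s$; this is handled by continuous dependence on initial data on each fixed $[0,s']$ with $s'<s$, where all the solutions involved do exist.
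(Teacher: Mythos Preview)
Your closedness argument is correct and is a genuinely different route from the paper's: you use continuous dependence on initial conditions to pass the blow-up of $\int_0^{s'}\|T^{X^{(\infty)}}\|^2$ back to $\int_0^{s'}\|T^{X^{(n)}}\|^2$, while the paper exploits the scaling identity $T^{tX}(u)=tT^X(tu)$ to show $f(tX)\to\infty$ as $t\uparrow s$ along the ray through $X^{(\infty)}$, and then uses continuity of $f$ at the interior point $tX^{(\infty)}$ together with the monotonicity $f(tX_n)\le f(X_n)$ for $t<1$. Both routes invoke Lemma~\ref{sfhpewpbid} as the key input; yours is perhaps more conceptual, the paper's avoids any appeal to ODE well-posedness theory beyond what is already in the two preceding lemmas.

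However, your boundedness argument has a genuine gap. Convexity of $g=\|T_i^X\|^2$ on $[0,1]$ does \emph{not} imply $g(0)\le 2\int_0^1 g$: take $g(t)=\max(0,1-nt)$, which is nonnegative and convex with $g(0)=1$ but $\int_0^1 g=\tfrac{1}{2n}\to 0$. The ``additive constant coming from the fact that the $\|T_i^X\|^2$ differ by constants'' does not help, since those constants are $\|X_i\|^2-\|X_j\|^2$ and hence depend on the very quantity $\|X\|$ you are trying to bound. Convexity bounds the midpoint value $g(1/2)\le\int_0^1 g$, not the endpoint value $g(0)$.

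The paper's boundedness argument instead uses the same scaling identity as above: it yields $f(tX)\ge tf(X)$ for $t\ge 1$, so if $m=\min_{\|X\|=\e}f>0$ (positive because $f(X)=0\iff X=0$) and $\|Y\|>\e$, writing $Y=tX$ with $\|X\|=\e$ and $t>1$ gives $f(Y)\ge tm$, hence $\|Y\|=t\e\le(\e/m)f(Y)$. You should replace your convexity step with this scaling argument, or with some other bound that actually controls $\|X\|$.
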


\begin{proof}
Since $F$ is independent of $K$, which is compact, it suffices to show that
\[
f:W\too\R,\quad 
X\mtoo\frac{1}{4}\int_0^12\|T_1^X(t)\|^2+\|T_2^X(t)\|^2+\|T_3^X(t)\|^2\,dt.
\]
is proper, i.e.\ that $f^{-1}([0,A])$ is compact for all $A>0$. We first show that $f^{-1}([0,A])$ is bounded, and then that it is closed in $\fk^3$ (which does not follow from continuity since the domain $W$ of $f$ is open in $\fk^3$).

Note that if $X\in W$ and $tX\in W$ for some $t>0$ then $T^{tX}(u)=tT^X(tu)$ and hence
\begin{equation}\label{h9qbuyvh03}
f(tX)=\frac{t}{4} \int_0^t2\|T^X_1(u)\|^2+\|T_2^X(u)\|^2+\|T_3^X(u)\|^2\,du.
\end{equation}
In particular, if $t\ge 1$ then $f(tX)\ge tf(X)$ and if $0<t\le1$ then $f(tX)\le tf(X)$. This will be used repeatedly in what follows.

Let $\e>0$ be such that $S_\e:=\{X\in\fk^3:\|X\|=\e\}\s W$. Then, $f(S_\e)$ is compact and hence attains a minimum $M\ge0$. We have $f(X)=0$ if and only if $X=0$, so in fact $M>0$. Let $Y\in\fk^3$ be such that $\|Y\|> \e$ and write $Y=tX$ where $X\in S_\e$ and $t>1$. Then, $f(Y)=f(tX)\ge t f(X)=tM$ and since $\|Y\|=t\e$, we get $\|Y\|\leq \frac{\e}{M}f(Y)$. Hence, $f^{-1}([0,A])$ is bounded for all $A>0$.

To show that $f^{-1}([0,A])$ is closed in $\fk^3$, let $X_n\in W$ be a sequence that converges to some $X\in\fk^3$ and satisfies $f(X_n)\leq A$ for all $n$. Suppose, for contradiction, that $X\notin W$. Since $W$ is star-shaped about $0$, we must have $\{t>0:tX\in W\}=(0,s)$ for some $0<s\le1$. Moreover, $f(tX)\to\infty$ as $t\to s$, since otherwise $T^{X}$ is defined on $[0,s]$ by \eqref{h9qbuyvh03} and Lemma \ref{sfhpewpbid}, so $T^{sX}(u)=sT^X(su)$ is defined for all $u\in[0,1]$ and hence $sX\in W$. Thus, there exists $t<s$ such that $f(tX)>A$. But $tX\in W$, so there exists $r>0$ such that the open ball $B_r(tX)$ is contained in $W$ and $f(Y)>A$ for all $Y\in B_r(tX)$. Let $n$ be such that $X_n\in B_r(X)$. Then, $|tX_n-tX|<tr<r$ so $f(tX_n)>A$. But since $t<1$ we have $f(tX_n)\leq tf(X_n)\le f(X_n)$, so $f(X_n)>A$, a contradiction. Thus, $X\in W$ and hence $f^{-1}([0,A])$ is closed in $\fk^3$.
\end{proof}

By the isometric $\SO(3)$-action rotating the complex structures, we have proved:

\begin{proposition}\label{v99uf9qhe1}
Let $I$ be any element in the 2-sphere of complex structures on $T^*G$. Then, there is a global K\"ahler potential for $I$ which is $K\times K$-invariant, proper, and bounded from below.
\end{proposition}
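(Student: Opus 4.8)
The plan is to obtain the potential for an arbitrary complex structure $I$ in the $2$-sphere by transporting the potential $F$ of \eqref{8bodvlu0lq} along the isometric $\SO(3)$-action that rotates the complex structures, and then to observe that each of the three desired properties survives this transport. The only nontrivial input is the content already established above: the preceding three lemmas together with the discussion of \eqref{8bodvlu0lq} show that $F$ is a global K\"ahler potential for $(I_1,\omega_1)$ which is $K\times K$-invariant, proper, and bounded from below.

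First I would fix $I=a_1I_1+a_2I_2+a_3I_3$ with $(a_1,a_2,a_3)$ on the unit sphere of $\R^3$. Since $\SO(3)$ acts transitively on that sphere and the $\SO(3)$-action on $T^*G$ rotates $I_1,I_2,I_3$ accordingly, there is an element of this action, i.e.\ a diffeomorphism $\psi\colon T^*G\to T^*G$, which is an isometry of the hyperk\"ahler metric, commutes with the $K\times K$-action (the two actions commute), and satisfies $\psi_*I_1=I$. Hence $\psi\colon(T^*G,I_1)\to(T^*G,I)$ is a biholomorphism, and since $\psi$ is an isometry it carries $\omega_1$ to the K\"ahler form $\omega_I$ of $I$. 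Because the operator $2i\partial\bar\partial$ is natural under biholomorphisms, pulling $\omega_1=2i\partial\bar\partial F$ back along $\psi^{-1}\colon(T^*G,I)\to(T^*G,I_1)$ gives $\omega_I=2i\partial_I\bar\partial_I(F\circ\psi^{-1})$, so $\widetilde F:=F\circ\psi^{-1}$ is a global K\"ahler potential for $I$.

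It then remains to check that $\widetilde F$ inherits the three properties, all of which is routine. It is $K\times K$-invariant because $F$ is and $\psi$ commutes with the $K\times K$-action. It is bounded from below because $F$ is and $\psi^{-1}$ is a bijection. And it is proper because for any compact set $C$ one has $\widetilde F^{-1}(C)=\psi(F^{-1}(C))$, which is compact since $F$ is proper and $\psi$ is a homeomorphism. There is no genuine obstacle in this argument: the entire content sits in the three preceding lemmas establishing the properties of $F$, and the passage to an arbitrary complex structure is a formal consequence of the existence of the isometric $\SO(3)$-action together with these invariance checks.
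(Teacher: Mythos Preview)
Your argument is correct and is precisely the approach the paper takes: the paper simply states ``By the isometric $\SO(3)$-action rotating the complex structures, we have proved'' the proposition, relying on the three preceding lemmas for the properties of $F$ and on the $\SO(3)$-symmetry to transport them to an arbitrary $I$. Your write-up merely spells out in detail what the paper leaves implicit.
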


We can now prove the relation between $T^*G\slll{}H$ and $\Spec(\C[\mu_\C^{-1}(0)]^{H_\C})$:

\begin{proof}[Proof of Theorem \ref{9up121d4yh}]
Denote by $\mu_\R^H$ and $\mu_\R^{K\times K}$ the moment maps for $H$ and $K\times K$ on $T^*G$ that we are getting from the K\"ahler potential $F$ as in Proposition \ref{11c5cigbp4}. Then, $\mu_\R^H$ is equal to the composition of $\mu_\R^{K\times K}$ with the projection $\fk^*\times\fk^*\to\h^*$. Moreover, since $G$ is semisimple, so is $K\times K$, and hence moment maps for $K\times K$ are unique. Thus, $\mu_\R^{H}$ must be the same moment map as the one considered in Theorem \ref{9up121d4yh}. Since $F$ is proper, we can apply Proposition \ref{p5xq2g27v0} to the K\"ahler quotient $\mu_\R^{-1}(0)/H$. The hyperk\"ahler quotient $T^*G\slll{}H$ is obtained by taking the subspace of $\mu_\R^{-1}(0)/H$ given by $\mu_\C^{-1}(0)\cap\mu_\R^{-1}(0)/H$, so we get the theorem.
\end{proof}

\section{Stratification of $\D(\g)$ into hyperk\"ahler manifolds}\label{7isy6y5jpy}

\subsection{$\D(G)$ depends only on the Lie algebra of $G$}

In this section we prove Theorem \ref{qaqfm3dpar} which says that $\D(G):=T^*G\slll{}(T_K\times T_K)$ depends only on the Lie algebra $\g$ of $G$. This will be used in \S\ref{g3db9khyew} to identify the strata of $\D(\g):=\D(G)$ as the top stratum of spaces $\D(\h)$ for some semisimple subalgebras $\h\s\g$ (Theorem \ref{6bfrjygjvw}).

The first step is the explicit algebraic expression for $\D(G)$ claimed in Proposition \ref{0ae71l8pfy}. By Theorem \ref{9up121d4yh} we have $\D(G)=\mu_\C^{-1}(0)\ps/(T\times T)$ and from \eqref{g6ydwgujbz},
\[
\mu_\C^{-1}(0)=\{(g,\xi)\in G\times\ft^\circ:\Ad_g^*\xi\in\ft^\circ\}.
\]
Let $\cO$ be a regular semisimple orbit in $\g$, say $\cO=G\cdot\tau$ for some $\tau\in\ft\reg$. Then, the map $G\to\cO$, $g\mto\Ad_g\tau$ is a principal $T$-bundle and the associated vector bundle $G\times_T\ft^\circ$ (using the coadjoint action on $\ft^\circ$) is isomorphic to the cotangent bundle $T^*\cO$ via $(g,\xi)\mto(\Ad_g\tau,\Ad_g^*\xi)$ (see e.g.\ \cite[Lemma 1.4.9]{chr09}). Thus, we have
\[
\D(G)=\{(g,\xi)\in G\times_T\ft^\circ:\Ad_g^*\xi\in\ft^\circ\}\ps/T=(T^*\cO\cap(\g\times\ft^\circ))\ps/T,
\]
where $T$ acts on $T^*\cO\s\g\times\g^*$ by the adjoint and coadjoint actions. Moreover, the action of $T$ on $G\times\ft^\circ$ is free so the $(T\times T)$-orbit type partition of $\D(G)$ coincides with the $T$-orbit type partition of $(T^*\cO\cap(\g\times\ft^\circ))\ps/T$. Another way of viewing this is to use reduction in stages \cite[\S4]{sja91} to get $\D(G)=(T^*G\slll{}(1\times T_K))\slll{}(T_K\times 1)$ and note that by Theorem \ref{9up121d4yh}, $T^*G\slll{}(1\times T_K)=G\times_T\ft^\circ=T^*\cO$.

We already see that $\D(G)$ depends only on $\g$ as an algebraic variety since $T^*\cO\cap(\g\times\ft^\circ)$ depends only on $\g$ and $T$ acts by the adjoint representation. Moreover, we have proved Proposition \ref{0ae71l8pfy}. But it remains to show that the hyperk\"ahler structures on the strata are also independent of coverings.

If a group $K$ acts on a set $M$ and $A\s K$, let $M_A:=\{p\in M:K_p=A\}$. The following simple observation will be useful here and in \S\ref{g3db9khyew}. 

\begin{lemma}\label{zkpb6ssaaq}
Let $(M,K,\mu)$ and $(N,L,\nu)$ be tri-Hamiltonian spaces, $f:K\to L$ a Lie group homomorphism and $F:M\to N$ an $f$-equivariant hyperk\"ahler map (isometric, tri-holomorphic and tri-symplectomorphic) which sends $\mu^{-1}(0)$ to $\nu^{-1}(0)$. If $K$ and $L$ act freely, then $F$ descends to a hyperk\"ahler map $\bar{F}:M\slll{}K\to N\slll{}L$. More generally, if $F(M_A)\s N_B$ for some $A\s K$ and $B\s L$, then the restriction $\bar{F}:(M\slll{}K)_{(A)}\to (N\slll{}L)_{(B)}$ is a hyperk\"ahler map.
\end{lemma}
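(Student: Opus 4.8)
The statement is a soft, formal consequence of the definitions, so the plan is to unwind what "hyperk\"ahler map descending to the quotients" means in each of the three relevant structures (metric, complex structures, symplectic forms) and check each passes through the quotient construction. First I would treat the free case. Since $F$ is $f$-equivariant and sends $\mu^{-1}(0)$ into $\nu^{-1}(0)$, its restriction $F\colon\mu^{-1}(0)\to\nu^{-1}(0)$ descends to a well-defined map $\bar F\colon\mu^{-1}(0)/K\to\nu^{-1}(0)/L$ on the level of sets (and it is continuous, hence smooth once we know the quotients are manifolds, which holds by freeness). The content is that $\bar F$ is compatible with the reduced hyperk\"ahler data.

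For this, recall how the reduced structures are built: writing $\iota\colon\mu^{-1}(0)\hookrightarrow M$ and $\pi\colon\mu^{-1}(0)\to M\slll{}K$, the reduced metric $\bar g$ is characterized by $\pi^*\bar g = \iota^* g$ restricted to the horizontal distribution (the orthogonal complement of the $K$-orbit directions), the reduced complex structures $\bar I_j$ are the ones induced on this horizontal distribution by $I_j$ (which preserves it, as $\mu$ is a hyperk\"ahler moment map), and the reduced K\"ahler forms $\bar\omega_j$ satisfy $\pi^*\bar\omega_j = \iota^*\omega_j$. The key point is that $F$ maps the $K$-orbit directions in $\mu^{-1}(0)$ to the $L$-orbit directions in $\nu^{-1}(0)$ (by $f$-equivariance: $dF(X^\#_M) = (f(X))^\#_N$) and, being an isometry, maps horizontal vectors to horizontal vectors. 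So $dF$ intertwines the horizontal distributions, and then:
\begin{itemize}
\item $F^*g_N = g_M$ and $F$ preserving horizontal spaces gives $\bar F^*\bar g_N = \bar g_M$;
\item $F^*I_{j,N} = I_{j,M}$ (tri-holomorphicity) restricted to horizontal spaces gives $\bar F^*\bar I_{j,N} = \bar I_{j,M}$;
\item $F^*\omega_{j,N} = \omega_{j,M}$ gives, after pulling back along $\pi$, that $\pi_M^*(\bar F^*\bar\omega_{j,N}) = \pi_M^*\bar\omega_{j,M}$, and since $\pi_M^*$ is injective on forms, $\bar F^*\bar\omega_{j,N} = \bar\omega_{j,M}$.
\end{itemize}
Hence $\bar F$ is a hyperk\"ahler map. (One may alternatively invoke Lemma~\ref{zkpb6ssaaq}'s hypotheses plus the universal property of symplectic/K\"ahler reduction, but the direct horizontal-distribution argument is cleanest.)

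For the general statement, restrict everything to the relevant orbit-type pieces. The hypothesis $F(M_A)\subseteq N_B$ together with $f$-equivariance forces $f(A)\subseteq B$; on the locally closed submanifolds $M_A$ and $N_B$ the groups $K$ and $L$ act with constant stabilizers $A$ and $B$, so $K/A$ and $L/B$ act freely (and properly) on $M_A$ and $N_B$ respectively. By Dancer--Swann \cite[Theorem 2.1]{dan97} the pieces $(M\slll{}K)_{(A)} = (\mu^{-1}(0)\cap M_A)/K$ and $(N\slll{}L)_{(B)} = (\nu^{-1}(0)\cap N_B)/L$ are smooth hyperk\"ahler manifolds, obtained exactly as free hyperk\"ahler quotients of $M_A$ by $K/A$ (resp.\ $N_B$ by $L/B$) after restricting the moment maps. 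Since $F$ restricts to an $\bar f$-equivariant hyperk\"ahler map $M_A\to N_B$ (where $\bar f\colon K/A\to L/B$ is induced by $f$) sending the restricted zero level set to the restricted zero level set, the free case just proved applies verbatim and yields the hyperk\"ahler map $\bar F\colon (M\slll{}K)_{(A)}\to(N\slll{}L)_{(B)}$.

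\textbf{Main obstacle.} There is no deep obstacle; the statement is essentially bookkeeping. The only point requiring a little care is the well-definedness and the matching of orbit-type strata with genuine free hyperk\"ahler quotients in the last paragraph — i.e.\ making sure that restricting $F$, the moment maps, and the group actions to the orbit-type pieces really does land us in the situation of the free case with $K/A$ and $L/B$ — which is exactly the content of the Dancer--Swann construction recalled in \S\ref{lm4qtiz8pq}. Since we are allowed to cite that, the proof is short.
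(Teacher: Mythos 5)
Your proposal is correct and follows essentially the same route as the paper's proof: handle the free case by the characterization of the reduced metric on horizontal vectors (using that an $f$-equivariant isometry sends horizontal vectors to horizontal vectors) together with the pullback characterization of the reduced symplectic forms, and then deduce the orbit-type statement by identifying $(M\slll{}K)_{(A)}$ with a free hyperk\"ahler quotient of the submanifold $M_A$ as in Dancer--Swann. One minor correction: the group acting freely on $M_A$ is $N_K(A)/A$, not $K/A$ (the latter need not be a group, and $K$ does not preserve $M_A$ unless $A$ is normal); this is exactly what the cited Dancer--Swann construction supplies and does not affect the argument.
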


\begin{proof}
Suppose first that $K$ and $L$ act freely. Since the three symplectic forms on $M\slll{}K$ are uniquely characterized by the fact that their pullback to $\mu^{-1}(0)$ are the restrictions of the symplectic forms of $M$ \cite[proof of Theorem 3.2]{hit87} (and similarly for $N\slll{}L$), it is immediate that $\bar{F}$ is a symplectomorphism for each of them. A hyperk\"ahler structure is uniquely determined by its metric and three symplectic forms, so it suffices to show that $\bar{F}$ is an isometry. Let $g$ be the metric on $M$ and $\pi:\mu^{-1}(0)\to\mu^{-1}(0)/K$ the quotient map. Then, $\pi$ is a principal $K$-bundle and the metric $\bar{g}$ on $\mu^{-1}(0)/K$ is uniquely determined by $\pi^*\bar{g}(u,v)=g(u,v)$ for all {\it horizontal} tangent vectors $u,v$ on $\mu^{-1}(0)$. A similar statement holds for the metric $\bar{h}$ on $N\slll{}L$. Since $F$ is an isometry and is $f$-equivariant, it follows that $\bar{F}$ maps horizontal vectors of $\mu^{-1}(0)$ to horizontal vectors of $\nu^{-1}(0)$. Let $\rho:\nu^{-1}(0)\to\nu^{-1}(0)/L$ be the quotient map. Then, for all horizontal vectors $u,v$ on $\mu^{-1}(0)$, we have $\pi^*(\bar{F}^*\bar{h})(u,v)=F^*\rho^*\bar{h}(u,v)=\rho^*\bar{h}(dF(u),dF(v))=\allowbreak h(dF(u),dF(v))=g(u,v)$, so $\bar{F}^*\bar{h}=\bar{g}$.

Now suppose that $F(M_A)\s N_B$ for some $A\s K$ and $B\s L$. Recall from \cite[\S2]{dan97} that $M_A$ is a hyperk\"ahler submanifold of $M$ on which $N_G(A)/A$ acts freely and $(M_A,N_G(A)/A,\mu|_{M_A})$ is a tri-Hamiltonian space. The hyperk\"ahler structure on $(M\slll{}K)_{(A)}=\mu^{-1}(0)_{(A)}/K$ is given by identifying this space with $M_A\slll{}(N_G(A)/A)$. Thus, we may repeat the argument above with $M$ and $N$ replaced by $M_A$ and $N_B$.
\end{proof}

We can now prove Theorem \ref{qaqfm3dpar}. The idea is to use Lemma \ref{zkpb6ssaaq} on the covering $T^*\tilde{G}\to\allowbreak T^*G$ and show that the map $\D(\tilde{G})\to\D(G)$ that it induces is a homeomorphism preserving the stratifications.

\begin{proof}[Proof of Theorem \ref{qaqfm3dpar}]
Let $\pi:\tilde{G}\to G$ be the covering map and $\tilde{K}$, $K$ maximal compact subgroups of $\tilde{G}$ and $G$ respectively with the same Lie algebra $\fk\s\g$. Then, the map $F : \tilde{G}\times\g \to G\times\g$, $(g,X)\mto(\pi(g),X)$ is hyperk\"ahler since it descends from the identity map on the space of solutions to Nahm's equations on $\fk$ (see the description of Kronheimer \cite{kro88}). Let $T_{\tilde{K}}$ and $T_K$ be maximal tori in $\tilde{K}$ and $K$ respectively, with the same Lie algebra $\ft_\fk\s\fk$. Then, $F$ is equivariant with respect to the covering $T_{\tilde{K}}\times T_{\tilde{K}}\to T_K\times T_K$. Moreover, if $\tilde{\mu}$ and $\mu$ are the moment maps for the actions of $T_{\tilde{K}}\times T_{\tilde{K}}$ and $T_K\times T_K$ respectively, then $\mu\circ F=\tilde{\mu}$ so $F$ maps $\tilde{\mu}^{-1}(0)$ to $\mu^{-1}(0)$. Thus, Lemma \ref{zkpb6ssaaq} ensures that $F$ descends to a continuous map $\bar{F}:\D(\tilde{G})\to\D(G)$ which is hyperk\"ahler on the strata that it preserves. But, the diagram
\[
\begin{tikzcd}
\D(\tilde{G})\arrow{r}{\bar{F}}\arrow{d} & \D(G)\arrow{d} \\
(T^*\cO\cap(\g\times\ft^\circ))\ps/\tilde{T}\arrow{r} & (T^*\cO\cap(\g\times\ft^\circ))\ps/T
\end{tikzcd}
\]
commutes, where the vertical and bottom maps are homeomorphisms preserving the orbit type partition, so $\bar{F}$ is also a homeomorphism preserving the orbit type partition. By Lemma \ref{zkpb6ssaaq}, $\bar{F}$ restricts to hyperk\"ahler maps on the strata, and since they are also homeomorphisms, they are hyperk\"ahler isomorphisms.
\end{proof}

We may now use the notation $\D(\g)$ instead of $\D(G)$.

\subsection{Stratification poset}\label{f80rc1x9ex}

Recall that the relation $S\leq T$ if $S\s\overline{T}$ on a partition $\cP$ of a topological space is a partial order whenever the elements of $\cP$ are locally closed. In particular, this holds for the orbit type partition of $\D(\g)$. To prove that this partition also satisfies the frontier condition (and hence is a stratification), we will first prove Theorem \ref{zt3pxhz7ur} identifying the poset of strata with the poset of root subsystems. The frontier condition will then be an easy consequence of this identification.

By Proposition \ref{0ae71l8pfy}, the $(T_K\times T_K)$-orbit type decomposition of $\D(\g)$ is the same as the $T$-orbit type decomposition of $(T^*\cO\cap(\g\times\ft^\circ))\ps/T$. It will be convenient to identify $\g^*$ with $\g$ using the non-degenerate invariant bilinear form, and hence $\ft^\circ$ with $\ft^\perp\s\g$. Then, $\D(\g)=(T\cO\cap(\g\times\ft^\perp))\ps/T$, where $T$ acts by the adjoint action on both factors. Thus, the orbit type partition of $\D(\g)$ is of the form $\cP=\{\D(\g)_{Z}:Z\in\mathcal{I}\}$ for some collection $\mathcal{I}$ of closed subgroups of $T$ (where $\D(\g)_{Z}=(T\cO\cap(\g\times\ft^\perp)\ps_{Z}/T$). Our first objective is to identify the collection $\mathcal{I}$ precisely. Let $\Phi$ be the root system of $\g$ with respect to $\ft$. We use the same notation as in \eqref{6hygdax5ph} for $Z_\Psi$ and $\D(\g)_\Psi$.

\begin{lemma}\label{6n1bilpsfe}
The stabilizer subgroup of $(X,Y)\in\g\times\g$ under the adjoint $T$-action is $Z_{\Phi(X,Y)}$, where
\[
\Phi(X,Y)=\Phi\cap\Span_\Z\{\alpha\in\Phi:(X_\alpha,Y_\alpha)\neq(0,0)\}.
\]
Moreover, $\Phi(X,Y)$ is a root subsystem of $\Phi$.
\end{lemma}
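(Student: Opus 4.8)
The plan is to compute the stabilizer subgroup directly from the weight decomposition of $\g$ and then verify the root-subsystem property. Write $X = X_0 + \sum_{\alpha\in\Phi} X_\alpha$ and $Y = Y_0 + \sum_{\alpha\in\Phi} Y_\alpha$ for the decompositions along $\ft\oplus\bigoplus_\alpha\g_\alpha$, and recall that $t\in T$ acts on $\g_\alpha$ by the scalar $\alpha(t)\in\C^*$ and trivially on $\ft$. Then $\Ad_t X = X$ if and only if $\alpha(t) X_\alpha = X_\alpha$ for every $\alpha$, i.e. $\alpha(t) = 1$ whenever $X_\alpha \ne 0$; similarly for $Y$. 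Setting $S := \{\alpha\in\Phi : (X_\alpha, Y_\alpha)\ne(0,0)\}$, this shows the stabilizer is exactly $\{t\in T : \alpha(t) = 1 \text{ for all }\alpha\in S\}$.

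The next step is to identify that group with $Z_{\Phi(X,Y)}$, where $\Phi(X,Y) = \Phi\cap\Span_\Z S$. Clearly $\{t : \alpha(t)=1\ \forall\alpha\in S\} = \{t : \alpha(t)=1\ \forall\alpha\in\Span_\Z S\}$, since the condition $\alpha(t)=1$ is multiplicative in $\alpha$ (viewing $\Phi\subseteq\Hom(T,\C^*)$, the character $(\alpha+\beta)(t) = \alpha(t)\beta(t)$), hence vanishing on $S$ forces vanishing on the $\Z$-span, and the converse is trivial. Intersecting the span with $\Phi$ changes nothing because any $\alpha\in\Span_\Z S$ that happens to lie in $\Phi$ already satisfies $\alpha(t)=1$. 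So the stabilizer equals $Z_{\Phi(X,Y)}$ with $\Phi(X,Y) = \Phi\cap\Span_\Z S$, matching the stated formula.

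Finally, for the root-subsystem claim, I would invoke the characterization recorded in \S\ref{nj5e2b9c36}: a subset $\Psi\subseteq\Phi$ is a root subsystem if and only if $(\Span_\Z\Psi)\cap\Phi = \Psi$. Apply this with $\Psi = \Phi(X,Y) = (\Span_\Z S)\cap\Phi$. Since $S\subseteq\Phi(X,Y)$, we have $\Span_\Z S\subseteq\Span_\Z\Phi(X,Y)\subseteq\Span_\Z S$, so all three $\Z$-spans coincide; intersecting with $\Phi$ then gives $(\Span_\Z\Phi(X,Y))\cap\Phi = (\Span_\Z S)\cap\Phi = \Phi(X,Y)$, which is precisely the required condition. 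Thus $\Phi(X,Y)$ is a root subsystem of $\Phi$.

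I do not expect any genuine obstacle here; the argument is a routine unwinding of the weight decomposition together with the multiplicativity of roots as characters of $T$ and the closure-under-$\Z$-span characterization of root subsystems. The only point to state carefully is that the condition ``$\alpha(t) = 1$ for all $\alpha\in S$'' is equivalent to the same condition over $\Span_\Z S$, which is where the abelian group structure of $\Hom(T,\C^*)$ is used.
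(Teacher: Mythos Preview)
Your proof is correct and matches the paper's argument essentially line-for-line: the paper also introduces the set $\Phi'(X,Y)=S$, identifies the stabilizer as $Z_{\Phi'(X,Y)}$ via the weight decomposition, and then shows $Z_{\Phi'(X,Y)}=Z_{\Phi(X,Y)}$ using the same multiplicativity of characters. For the root-subsystem claim the paper simply remarks that any set of the form $\Phi\cap\Span_\Z\Psi$ is a root subsystem, which is precisely your closure-under-$\Z$-span verification.
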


\begin{proof}
Let $\Phi'(X,Y)=\{\alpha\in\Phi:(X_\alpha,Y_\alpha)\neq(0,0)\}$. The weights spaces for the action of $T$ on $\g\times\g$ are $\g_\alpha\times\g_\alpha$ for $\alpha\in\Phi\cup\{0\}$. Thus, $t$ fixes $(X,Y)$ if and only if $t\in Z_{\Phi'(X,Y)}$, so we want to show that $Z_{\Phi'(X,Y)}=Z_{\Phi(X,Y)}$. Since $\Phi'(X,Y)\s\Phi(X,Y)$ we have $Z_{\Phi(X,Y)}\s Z_{\Phi'(X,Y)}$. Conversely, let $t\in Z_{\Phi'(X,Y)}$ and $\beta\in\Phi(X,Y)$. Then, $\beta=\sum_{\alpha\in\Phi'(X,Y)}n_\alpha\alpha$ for some $n_\alpha\in\Z$, so $\beta(t)=\prod_{\alpha\in\Phi'(X,Y)}\alpha(t)^{n_\alpha}=1$.

Any set of the form $\Phi\cap\Span_\Z \Psi$ for some subset $\Psi\s\Phi$ is a root subsystem, so $\Phi(X,Y)$ is a root subsystem.
\end{proof}

This means that there is some collection $\mathcal{J}$ of root subsystems of $\Phi$ such that the orbit type partition is $\cP=\{\D(\g)_{\Psi}:\Psi\in\mathcal{J}\}$. The next lemma shows that $\mathcal{J}$ is, in fact, the set of {\it all} root subsystems of $\Phi$.

\begin{lemma}\label{tpk0qmasjr}
$\D(\g)_{\Psi}\neq\emptyset$ for any root subsystem $\Psi\le\Phi$.
\end{lemma}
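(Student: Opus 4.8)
The plan is to exhibit, for each root subsystem $\Psi\le\Phi$, an explicit point $(X,Y)\in T^*\cO\cap(\g\times\ft^\perp)$ whose $T$-orbit is closed and whose stabilizer is exactly $Z_\Psi$; by Lemma \ref{6n1bilpsfe} and the description \eqref{6hygdax5ph}, this shows $\D(\g)_\Psi\ne\emptyset$. The key constraints such a point must satisfy are: (i) $X$ lies in the regular semisimple orbit $\cO=G\cdot\tau$, i.e.\ $X$ is conjugate to $\tau\in\ft\reg$; (ii) $Y\in T_X\cO$, which under our identifications means $Y\in[\g,X]$, together with $Y\in\ft^\perp$; (iii) the stabilizer condition $\Phi(X,Y)=\Psi$, meaning the set of roots $\alpha$ with $(X_\alpha,Y_\alpha)\ne(0,0)$ spans $\Psi$ over $\Z$ and generates no root outside $\Psi$; and (iv) polystability of the $T$-orbit through $(X,Y)$.

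First I would handle the stabilizer and the orbit conditions together. The natural candidate is to take $X=\tau\in\ft\reg$ itself (so $X_\alpha=0$ for all $\alpha$), which trivially lies in $\cO$ and has $T$-stabilizer all of $T=Z_\emptyset$; then the root subsystem $\Phi(\tau,Y)$ is determined entirely by $Y$. So I would set $X=\tau$ and choose $Y=\sum_{\alpha\in\Psi}c_\alpha e_\alpha$ for suitable root vectors $e_\alpha\in\g_\alpha$ and scalars $c_\alpha\ne0$, with the $c_\alpha$ chosen so that $Y\in\ft^\perp$ (since each $\g_\alpha$ is already orthogonal to $\ft$ for $\alpha\ne0$, this is automatic) and so that $Y$ is tangent to $\cO$ at $\tau$. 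The tangent space $T_\tau\cO=[\g,\tau]=\bigoplus_{\alpha\in\Phi}\g_\alpha$, so \emph{any} $Y\in\bigoplus_{\alpha\in\Psi}\g_\alpha$ automatically lies in $T_\tau\cO$; thus $(\tau,Y)\in T^*\cO$. For the stabilizer, Lemma \ref{6n1bilpsfe} gives $\Phi(\tau,Y)=\Phi\cap\Span_\Z\Psi=\Psi$ (the last equality because $\Psi$ is a root subsystem), so the $T$-stabilizer of $(\tau,Y)$ is exactly $Z_\Psi$.

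The remaining — and I expect principal — obstacle is polystability: showing the $T$-orbit of $(\tau,Y)$ is closed in $T^*\cO\cap(\g\times\ft^\perp)$. Here I would use the Kempf--Ness/Hilbert--Mumford picture together with the weight decomposition. The point $(\tau,Y)$ is fixed by the subtorus $Z_\Psi$, and $T/Z_\Psi$ acts on the remaining coordinates $Y=\sum_{\alpha\in\Psi}c_\alpha e_\alpha$ with weights $\{\alpha|_{\ft/\Lie Z_\Psi}:\alpha\in\Psi\}$. Since $\Psi=-\Psi$, the set of weights appearing is symmetric about the origin, so $0$ lies in the interior of the convex hull of the weights; by the Hilbert--Mumford criterion (equivalently, because a torus orbit is closed iff $0$ is in the relative interior of the Newton polytope of the coordinates), the $T/Z_\Psi$-orbit through $Y$ is closed, hence so is the $T$-orbit through $(\tau,Y)$ — and one must check closedness is inherited when restricting from $\g\times\ft^\perp$ to the (closed) subvariety $T^*\cO\cap(\g\times\ft^\perp)$, which follows since $T$ preserves this subvariety. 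Assembling these pieces gives a polystable point with stabilizer $Z_\Psi$, so $\D(\g)_\Psi=(T^*\cO\cap(\g\times\ft^\perp))\ps_{Z_\Psi}/T\ne\emptyset$. I would double-check the one subtle point — that a symmetric weight set really does force $0$ into the interior of the hull relative to the span of those weights, which is exactly the span needed for the $T/Z_\Psi$-action to be effective — but this is immediate from symmetry plus the fact that the weights $\alpha|_{\ft/\Lie Z_\Psi}$ span $(\ft/\Lie Z_\Psi)^*$ by construction of $Z_\Psi$.
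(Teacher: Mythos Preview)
Your proposal is correct and follows essentially the same approach as the paper: construct the point $(\tau,Y)$ with $Y=\sum_{\alpha\in\Psi}c_\alpha e_\alpha$, $c_\alpha\ne0$, verify it lies in $T\cO\cap(\g\times\ft^\perp)$, compute its stabilizer via Lemma~\ref{6n1bilpsfe} using $\Phi\cap\Span_\Z\Psi=\Psi$, and establish polystability from the symmetry $\Psi=-\Psi$ of the weight set. The paper's proof is more terse but uses the identical construction and the same polystability criterion (citing \cite[Proposition~6.15]{pop94} for the convex-hull characterization you invoke via Hilbert--Mumford).
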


\begin{proof}
Take $0\neq X_\alpha\in\g_\alpha$ for all $\alpha\in\Psi$ and let $X=\sum_{\alpha\in\Psi}X_\alpha$. Then, $(\tau,X)\in\allowbreak(T\cO\cap(\g\times\ft^\perp))$. Recall that a point is polystable if and only if $0$ is in the interior of the convex hull of its set of weights \cite[Proposition 6.15]{pop94}. Thus, $(\tau,X)$ is polystable since its set of weights is precisely $\Psi\cup\{0\}$ and we have $\alpha\in\Psi\Longleftrightarrow-\alpha\in\Psi$. Moreover, $\Phi(\tau,X)=\Phi\cap\Span_\Z\Psi=\Psi$, so $(\tau,X)\in(T\cO\cap(\g\times\ft^\perp))\ps_{Z_\Psi}/T=\D(\g)_{\Psi}$.
\end{proof}

In other words, we have shown that the map $\Psi\mto\D(\g)_\Psi$ in Theorem \ref{zt3pxhz7ur} is surjective. The next lemma shows that it is injective.

\begin{lemma}\label{x8jufuo3f0}
Let $\Psi_1$ and $\Psi_2$ be two root subsystems. Then, $\D(\g)_{\Psi_1}=\D(\g)_{\Psi_2}$ if and only if $\Psi_1=\Psi_2$.
\end{lemma}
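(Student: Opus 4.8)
The plan is to reduce the equality $\D(\g)_{\Psi_1}=\D(\g)_{\Psi_2}$ to an equality of the corresponding subgroups $Z_{\Psi_1}$ and $Z_{\Psi_2}$ of $T$, and then to recover the root subsystem from its subgroup. First I would observe that by definition $\D(\g)_\Psi$ is the image in $(T\cO\cap(\g\times\ft^\perp))\ps/T$ of the set of points whose $T$-stabilizer is exactly $Z_\Psi$, and by Lemma \ref{6n1bilpsfe} the stabilizer of $(X,Y)$ is $Z_{\Phi(X,Y)}$. So if $\D(\g)_{\Psi_1}=\D(\g)_{\Psi_2}$ (and both are nonempty, which holds by Lemma \ref{tpk0qmasjr}), then picking a point in this common stratum and applying Lemma \ref{6n1bilpsfe} shows $Z_{\Psi_1}=Z_{\Psi_2}$ as subgroups of $T$. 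Conversely $\Psi_1=\Psi_2$ trivially gives $\D(\g)_{\Psi_1}=\D(\g)_{\Psi_2}$, so the content is the forward direction, and it suffices to show that the assignment $\Psi\mapsto Z_\Psi$ is injective on root subsystems.

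For that injectivity, I would argue that $\Psi$ is recovered from $Z_\Psi$ by the formula $\Psi = \{\alpha\in\Phi : \alpha|_{Z_\Psi}\equiv 1\}$. The inclusion $\subseteq$ is immediate from the definition of $Z_\Psi$. For $\supseteq$, suppose $\alpha\in\Phi$ vanishes on $Z_\Psi$; I want $\alpha\in\Psi$. Here I would use the characterization recalled just before Lemma \ref{6n1bilpsfe} (and in \S\ref{nj5e2b9c36}) that a root subsystem satisfies $(\Span_\Z\Psi)\cap\Phi=\Psi$, so it is enough to show $\alpha\in\Span_\Z\Psi$. The key point is that $Z_\Psi$ is the kernel of the homomorphism $T\to(\C^*)^\Psi$, $t\mapsto(\beta(t))_{\beta\in\Psi}$, so the characters of $T$ trivial on $Z_\Psi$ form exactly the subgroup of the character lattice $X^*(T)$ generated by $\Psi$ — this is the standard fact that for a subtorus-like subgroup cut out by characters, the characters killing it are the saturation issue, but since $T$ here is a torus (the complexified maximal torus) and we are looking at a \emph{sublattice} spanned by $\Psi$ inside $X^*(T)$, the characters vanishing on $Z_\Psi$ are precisely $(\Span_\Z\Psi)_{\mathrm{sat}}$, the saturation of $\Span_\Z\Psi$ in $X^*(T)$. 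Then $\alpha$ trivial on $Z_\Psi$ gives $\alpha$ in this saturation, and $\alpha\in\Phi$ together with the fact that root lattices are already relevant forces $\alpha\in\Span_\Z\Psi$; more robustly, since $\Psi=\Phi\cap\Span_\Z\Psi$ it is cleanest to note that $(\Span_\Z\Psi)_{\mathrm{sat}}\cap\Phi=\Span_\Z\Psi\cap\Phi=\Psi$ because $\Phi$ spans a lattice in which $\Span_\Z\Psi$ may or may not be saturated, so I would instead phrase the recovery as $\Psi = \Phi\cap\{\alpha : \alpha|_{Z_\Psi}\equiv 1\}$ and only need $\{\alpha\in\Phi:\alpha|_{Z_\Psi}\equiv 1\}=\Phi\cap\Span_\Z\Psi$, which is exactly the computation already carried out in the proof of Lemma \ref{6n1bilpsfe} (the identity $Z_{\Phi'}=Z_{\Phi\cap\Span_\Z\Phi'}$ run in reverse).

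The main obstacle is the lattice-theoretic subtlety in the previous paragraph: one must be careful that $Z_\Psi$, being cut out by the characters in $\Psi$, might also be killed by characters outside $\Span_\Z\Psi$ if $\Span_\Z\Psi$ is not saturated in $X^*(T)$. The resolution is that we only ever compare against roots $\alpha\in\Phi$, and the defining property $\Psi=\Phi\cap\Span_\Z\Psi$ of a root subsystem is precisely what makes the intersection with $\Phi$ collapse the saturation. Concretely: if $\alpha\in\Phi$ and $\alpha|_{Z_\Psi}\equiv 1$ then $\alpha|_{Z_{\Psi'}}\equiv 1$ where $\Psi'=\Phi'(X,Y)$ for any generating data, and by the argument in Lemma \ref{6n1bilpsfe} this forces $\alpha\in\Span_\Z\Psi$, hence $\alpha\in\Phi\cap\Span_\Z\Psi=\Psi$. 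So from $Z_{\Psi_1}=Z_{\Psi_2}$ we get $\Psi_1=\Phi\cap\{\alpha:\alpha|_{Z_{\Psi_1}}\equiv1\}=\Phi\cap\{\alpha:\alpha|_{Z_{\Psi_2}}\equiv1\}=\Psi_2$, completing the proof. I would keep the write-up short by citing the relevant line of the proof of Lemma \ref{6n1bilpsfe} rather than repeating the character computation.
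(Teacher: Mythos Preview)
Your overall strategy matches the paper's: reduce to showing that $Z_{\Psi_1}=Z_{\Psi_2}$ forces $\Psi_1=\Psi_2$, equivalently that every root $\alpha\in\Phi$ trivial on $Z_\Psi$ already lies in $\Psi$. The gap is in how you carry out that last step.

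You assert that the characters of $T$ trivial on $Z_\Psi$ are the \emph{saturation} $(\Span_\Z\Psi)_{\mathrm{sat}}$ in $X^*(T)$, and then try to argue that intersecting with $\Phi$ collapses the saturation. Both parts are problematic. First, the annihilator statement is wrong: by the standard duality between diagonalizable groups and finitely generated abelian groups, closed subgroups of a torus $T$ correspond bijectively to \emph{all} subgroups of $X^*(T)$ (not just saturated ones), via $L\mapsto Z_L$ and $H\mapsto\{\chi:\chi|_H\equiv 1\}$. So the characters trivial on $Z_\Psi$ are exactly $\Span_\Z\Psi$, and there is no saturation issue at all. Second, your fallback of citing Lemma~\ref{6n1bilpsfe} does not help: that lemma only shows the easy direction, that every $\beta\in\Span_\Z\Psi$ is trivial on $Z_\Psi$, not the converse you need. (As a sanity check that saturation really cannot be invoked blindly: in $B_2$ the long-root subsystem $\Psi$ has $2\alpha_1\in\Span_\Z\Psi$ but $\alpha_1\notin\Psi$, so $(\Span_\Z\Psi)_{\mathrm{sat}}\cap\Phi\neq\Psi$; nonetheless $\alpha_1$ is \emph{not} trivial on $Z_\Psi$, consistent with the correct statement.)

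Once you use the correct fact, your argument is immediate: $\alpha|_{Z_\Psi}\equiv 1$ gives $\alpha\in\Span_\Z\Psi$, hence $\alpha\in\Phi\cap\Span_\Z\Psi=\Psi$. The paper proves this same implication by hand rather than invoking torus duality: it extends a system of simple roots $\beta_1,\ldots,\beta_k$ of $\Psi$ to a basis $\beta_1,\ldots,\beta_n$ of $\ft^*$, takes the dual basis $H_1,\ldots,H_n$, and evaluates $\alpha$ on suitable multiples of the $H_j$ to show the coefficients of $\alpha$ in this basis are integers for $j\le k$ and vanish for $j>k$, whence $\alpha\in\Span_\Z\Psi$.
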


\begin{proof}
The only non-trivial part is to show that if $Z_{\Psi_1}=Z_{\Psi_2}$ then $\Psi_1=\Psi_2$. It suffices to show that if $\Psi$ is a root subsystem and $\alpha\in\Phi$ is such that $\alpha(t)=1$ for all $t\in Z_\Psi$, then $\alpha\in\Psi$. By viewing $\alpha$ as an element of $\ft^*$, we want to show that if $\alpha(H)\in 2\pi i \Z$ for all $H\in \ft$ such that $\{\beta(H):\beta\in\Psi\}\s 2\pi i\Z$, then $\alpha\in\Psi$. Let $\beta_1,\ldots,\beta_k\in\Psi$ be a set of simple roots and complete it to a basis $\{\beta_1,\ldots,\beta_n\}$ for $\ft^*$. Let $H_1,\ldots,H_n$ be the dual basis in $\ft$. Then, $\alpha=a_1\beta_1+\cdots+a_n\beta_n$ and since $\beta_i(2\pi iH_j)\in 2\pi i\Z$, the assumption on $\alpha$ implies that $2\pi ia_j=\alpha(2\pi iH_j)\in 2\pi i\Z$ so $a_j\in\Z$. Moreover, for all $H\in\Span\{H_{k+1},\ldots,H_n\}$ we have $\beta(H)=0$ for all $\beta\in\Psi$, so $\alpha(H)=(a_{k+1}\beta_{k+1}+\cdots+a_n\beta_n)(H)\in 2\pi i\Z$. Thus, $a_{k+1}\beta_{k+1}+\cdots+a_n\beta_n=0$ and hence $\alpha\in(\Span_\Z\Psi)\cap\Phi=\Psi$.
\end{proof}

Thus, $\Psi\mto\D(\g)_\Psi$ is a bijection from the set of root subsystems of $\Phi$ to the set of orbit type strata of $\D(\g)$. It remains to show that it is an isomorphism of posets. The crucial ingredient is the following result:

\begin{proposition}\label{b65xnhncei}
For all $\Psi\le\Phi$ we have
\[
\overline{\D(\g)_{\Psi}}=\bigcup_{\chi\le\Psi}\D(\g)_{\chi}.
\]
Moreover, $\overline{\D(\g)_\Psi}$ is also the Zariski-closure of $\D(\g)_\Psi$, and $\D(\g)_\Psi$ is Zariski-locally-closed.
\end{proposition}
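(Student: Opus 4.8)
The plan is to work entirely on the algebraic model $(T\cO \cap (\g\times\ft^\perp))\ps/T$ from Proposition \ref{0ae71l8pfy}, exploiting that $T$ is a torus so that everything is governed by the weight data. Write $V := T\cO \cap (\g\times\ft^\perp)$; this is a $T$-stable closed subvariety of $\g\times\g$, and $\D(\g)_\Psi$ is the image in $V\ps/T$ of the locus $V_{Z_\Psi}$ of points with stabilizer exactly $Z_\Psi$. By Lemma \ref{6n1bilpsfe}, a point $(X,Y)\in V$ lies in $V_{Z_\Psi}$ iff $\Phi(X,Y)=\Psi$, i.e.\ iff the $\Z$-span of the support $\{\alpha : (X_\alpha,Y_\alpha)\neq 0\}$ meets $\Phi$ exactly in $\Psi$. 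The first main step is to describe the \emph{algebraic} closure: I would show that the Zariski closure of $V_{Z_\Psi}$ inside $V$ is contained in $\bigcup_{\chi\le\Psi} V_{Z_\chi}$, because the support of a point can only degenerate (roots dropping out), so a limit $(X',Y')$ of points with $\Phi(X,Y)=\Psi$ has support contained in $\Psi$, hence $\Phi(X',Y')\le\Psi$; conversely each $V_{Z_\chi}$ with $\chi\le\Psi$ is hit, by an explicit one-parameter degeneration (as in Lemma \ref{tpk0qmasjr}, scale the components of a generic point of $V_{Z_\Psi}$ indexed by $\Psi\setminus\chi$ to zero while staying inside $V$, which is possible since $\tau$ lies in the $T$-orbit closure computation). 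This identifies the Zariski closure of $V_{Z_\Psi}$ with $\bigcup_{\chi\le\Psi} V_{Z_\chi}$ and shows $V_{Z_\Psi}$ is Zariski-locally-closed.

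The second step is to pass this to the GIT quotient. Since the GIT quotient map $\pi: V\ps/T \to \Spec(\C[V]^T)$ is a \emph{closed} map for the Zariski topology and the Luna strata are the images of the $V_{Z_\chi}$, the Zariski closure of $\D(\g)_\Psi = \pi(V_{Z_\Psi}\cap V\ps)$ is $\pi(\overline{V_{Z_\Psi}}\cap V\ps) = \bigcup_{\chi\le\Psi}\D(\g)_\chi$, provided I check that polystability is compatible with the degeneration, i.e.\ that $\overline{V_{Z_\Psi}}\cap V\ps$ already equals $\bigcup_{\chi\le\Psi}(V_{Z_\chi}\cap V\ps)$. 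This is where the torus is essential: by Popov's criterion (\cite[Proposition 6.15]{pop94}, already cited in Lemma \ref{tpk0qmasjr}) polystability of $(X,Y)$ is the condition that $0$ lies in the interior of the convex hull of the weight support, which is a condition only on $\Phi(X,Y)$ (and holds automatically when the support is a root subsystem, since $\Psi=-\Psi$). So the degeneration I build in step one can be arranged to land in the polystable locus, giving equality. The final step is to note that for a subvariety which is Zariski-locally-closed and whose Zariski closure is a finite union of strata, the Euclidean closure coincides with the Zariski closure (the Euclidean closure is contained in the Zariski closure, and the explicit degenerations show the reverse containment of each $\D(\g)_\chi$, using that each stratum is irreducible or at least that its generic point is a Euclidean limit). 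Hence $\overline{\D(\g)_\Psi} = \bigcup_{\chi\le\Psi}\D(\g)_\chi$ in both topologies.

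The step I expect to be the main obstacle is the \emph{reverse} inclusion combined with polystability: producing, for each $\chi\le\Psi$, an actual point of $\D(\g)_\chi$ in the Euclidean closure of $\D(\g)_\Psi$. One must start from a polystable point of $V_{Z_\Psi}$, degenerate the ``extra'' root components to zero, and verify three things simultaneously — that the limit still lies in $V = T\cO\cap(\g\times\ft^\perp)$ (so the $\g$-component must stay in the regular semisimple orbit $\cO$, which forces a careful choice: keep the semisimple part $\tau$ fixed and only kill nilpotent-direction components), that the limit has stabilizer exactly $Z_\chi$ and not something smaller, and that the limit is again polystable. The convex-hull characterization handles polystability cleanly, and choosing the starting point of the form $(\tau, \sum_{\alpha\in\Psi}X_\alpha)$ with all $X_\alpha\neq 0$ (as in Lemma \ref{tpk0qmasjr}) makes the stabilizer computation transparent, so the real care is in the geometry of staying inside $T\cO$ under the degeneration; I would do this by writing the degeneration as a limit of $T$-translates composed with scaling in the Cartan-normal directions, so that the $\g$-component never leaves $\cO$.
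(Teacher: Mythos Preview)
Your outline is workable but more laborious than the paper's argument, and it does not quite close the reverse inclusion. The paper avoids your flagged obstacle by a single structural observation rather than by constructing degenerations. It introduces the closed $T$-stable subvariety
\[
M_\Psi := V \cap (\g'_\Psi\times\g'_\Psi),\qquad \g'_\Psi=\ft\oplus\bigoplus_{\alpha\in\Psi}\g_\alpha,
\]
and notes that $Z_\Psi$ is exactly the \emph{kernel} of the $T$-action on $M_\Psi$. Hence $(M_\Psi)^{\mathrm{ps}}_{Z_\Psi}=(M_\Psi)^{\mathrm{s}}\cap(M_\Psi)_{Z_\Psi}$ is Zariski-open in $M_\Psi$ (the stable locus is always open, and the locus where the stabilizer equals the kernel is open by \cite[Proposition~7.2]{pop94}). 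Thus $\D(\g)_\Psi=(M_\Psi)^{\mathrm{ps}}_{Z_\Psi}/T$ is Zariski-open in the closed subvariety $M_\Psi^{\mathrm{ps}}/T=\Spec(\C[M_\Psi]^T)\subseteq\D(\g)$, identifying $\overline{\D(\g)_\Psi}$ directly with $M_\Psi^{\mathrm{ps}}/T$. Decomposing $M_\Psi^{\mathrm{ps}}/T$ by $T$-orbit type then yields $\bigcup_{\chi\le\Psi}\D(\g)_\chi$ at once. No limit is ever taken, so the question of remaining inside $T\cO$ along a degeneration never arises.

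By contrast, your reverse inclusion is incomplete as stated: exhibiting the single limit $(\tau,\sum_{\alpha\in\chi}X_\alpha)$ in $\D(\g)_\chi$ does not show that \emph{all} of $\D(\g)_\chi$ lies in $\overline{\D(\g)_\Psi}$, and you cannot appeal to irreducibility (the strata $\D(\g)_\chi$ have $|W_\Phi:W_\chi|$ components). You would have to perturb an \emph{arbitrary} polystable $(X,Y)\in V_{Z_\chi}$ into $V_{Z_\Psi}$ while staying inside $T\cO\cap(\g\times\ft^\perp)$; this is feasible but genuinely requires the analysis of $T_X\cO$ that you were worried about. The paper's identification $\overline{\D(\g)_\Psi}=M_\Psi^{\mathrm{ps}}/T$ delivers the full reverse inclusion in one stroke and makes the Zariski-local-closedness immediate.
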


\begin{proof}
Let $\g'_\Psi=\ft\oplus\bigoplus_{\alpha\in\Psi}\g_\alpha$ (which is slightly different than the $\g_\Psi$ defined in \S\ref{nj5e2b9c36}), $M=T\cO\cap(\g\times\ft^\perp)$ and $M_\Psi=M\cap(\g'_\Psi\times\g'_\Psi)$. We claim that $M_{Z_\Psi}\s M_\Psi$. Indeed, if $(X,Y)\in M_{Z_\Psi}$ then by Lemma \ref{6n1bilpsfe}, $Z_\Psi=Z_{\Phi(X,Y)}$ and by the proof of Lemma \ref{x8jufuo3f0} this implies $\Psi=\Phi(X,Y)$. Thus, $\{\alpha\in\Phi:(X_\alpha,Y_\alpha)\ne(0,0)\}\s\Psi$, so $(X,Y)\in M_\Psi$. Hence, $\D(\g)_\Psi=M\ps_{Z_\Psi}/T=(M_\Psi)\ps_{Z_\Psi}/T$. Moreover, $(M_\Psi)\ps_{Z_\Psi}$ is Zariski-open in $M_\Psi$ since $(M_\Psi)\ps_{Z_\Psi}=(M_\Psi)\st\cap (M_\Psi)_{Z_\Psi}$, where $(M_\Psi)\st$ is the set of stable points (which is always Zariski-open) and $Z_\Psi$ is the kernel of the action of $T$ on $M_\Psi$ so $(M_\Psi)_{Z_\Psi}$ is Zariski-open in $M_\Psi$ (see \cite[Proposition 7.2]{pop94}). Hence, $\overline{\D(\g)_\Psi}=M_\Psi\ps/T=\Spec(\C[M_\Psi]^T)$ is Zariski-closed in $\D(\g)$, and so $\overline{\D(\g)_\Psi}$ is the Zariski-closure of $\D(\g)_\Psi$. Since $(M_\Psi)\ps_{Z_\Psi}$ is Zariski-open in $M_\Psi$, so is its image in $M\ps_\Psi/T$, so $\D(\g)_\Psi$ is Zariski-locally-closed. Now, by decomposing $M_\Psi\ps/T$ by $T$-orbit types, we get
\[\overline{\D(\g)_\Psi}=\bigcup_{\chi\le \Psi} (M_\Psi)\ps_{Z_\chi}/T=\bigcup_{\chi\le\Psi}M\ps_{Z_\chi}/T=\bigcup_{\chi\le\Psi}\D(\g)_\chi.\qedhere\]
\end{proof}

In particular, this proposition shows that $\Psi_1\le\Psi_2$ if and only if $\D(\g)_{\Psi_1}\le\D(\g)_{\Psi_2}$, so the map $\Psi\mto\D(\g)_\Psi$ is an isomorphism of posets, and we have proved Theorem \ref{zt3pxhz7ur}. Moreover, it now follows easily that the partition is a stratification:

\begin{proof}[Proof of Theorem \ref{2rzpnjamfd}]
In general, the orbit type decomposition of a hyperk\"ahler quotient satisfies the manifold and local conditions, so it suffices to show frontier condition. Suppose that $\D(\g)_{\Psi_1}\cap\overline{\D(\g)_{\Psi_2}}\ne\emptyset$ for some $\Psi_1,\Psi_2\le\Phi$. By Proposition \ref{b65xnhncei}, this implies $\D(\g)_{\Psi_1}\cap\D(\g)_\chi\ne\emptyset$ for some $\chi\le\Psi_2$, and hence $\D(\g)_{\Psi_1}=\D(\g)_\chi$. By Lemma \ref{x8jufuo3f0}, we get $\Psi_1=\chi$ so $\D(\g)_{\Psi_1}\s\overline{\D(\g)_{\Psi_2}}$. 
\end{proof}

\subsection{Description of the strata}\label{g3db9khyew}

Now that we know that the orbit type partition of $\D(\g)$ is of the form $\{\D(\g)_\Psi:\Psi\le\Phi\}$, our next goal is to describe the strata $\D(\g)_\Psi$ individually. 

First note that since $\Phi$ is a greatest element in the poset of root subsystems, $\D(\g)_\Phi$ is dense in $\D(\g)$. We denote this top stratum by $\D(\g)_{\mathrm{top}}:=\D(\g)_\Phi$. 

\begin{proposition}\label{n25r7847fc}
The stratum $\D(\g)_{\mathrm{top}}$ is Zariski-open, connected, and of real dimension $4(\dim\g-2\rk\g)$.
\end{proposition}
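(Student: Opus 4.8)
The plan is to identify $\D(\g)_{\mathrm{top}}$ explicitly and then read off each of the three claimed properties from the description. Recall from Proposition \ref{b65xnhncei} (and its proof) that $\D(\g)_\Phi = M\ps_{Z_\Phi}/T$ where $M = T\cO\cap(\g\times\ft^\perp)$, that this set is Zariski-locally-closed, and in fact --- since $\Phi$ is the top element of the poset --- it is Zariski-open in $\overline{\D(\g)_\Phi} = \D(\g)$. So Zariski-openness is immediate from Proposition \ref{b65xnhncei}. It remains to compute the dimension and show connectedness.

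For the dimension count, I would work upstairs on $M = T\cO\cap(\g\times\ft^\perp)$. Writing a point of $T^*\cO$ in the form $(\Ad_g\tau, \Ad_g^*\xi)$ with $g\in G$, $\xi\in\ft^\circ$ (using the identification $T^*\cO\cong G\times_T\ft^\circ$), membership in $\g\times\ft^\perp$ means $\Ad_g\tau\in\g$ (automatic) and $\Ad_g^*\xi\in\ft^\circ$. Equivalently, back in $G\times\ft^\circ$ before quotienting by $T$, this is the set $\{(g,\xi): \Ad_g^*\xi\in\ft^\circ\}$, which is exactly $\mu_\C^{-1}(0)$ from the proof of Proposition \ref{0ae71l8pfy}. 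Its complex dimension: $G$ contributes $\dim\g$, the condition $\Ad_g^*\xi\in\ft^\circ$ on $\xi\in\ft^\circ$ (which has dimension $\dim\g - \rk\g$) cuts down by $\rk\g$ generically (the map $\xi\mapsto$ projection of $\Ad_g^*\xi$ to $\ft^*$ is generically a submersion onto $\ft^*$), giving $\dim_\C M = \dim\g + (\dim\g - \rk\g) - \rk\g = 2(\dim\g - \rk\g)$. Quotienting the Zariski-open dense piece $M\ps_{Z_\Phi}$ by the $T$-action, which has generic stabilizer $Z_\Phi$ (a finite group, since $Z_\Phi$ is the kernel of the $T$-action on all of $\g$, intersected appropriately --- actually $Z_\Phi$ is finite because the roots span $\ft^*$), the orbits have dimension $\rk\g$. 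Hence $\dim_\C\D(\g)_{\mathrm{top}} = 2(\dim\g - \rk\g) - \rk\g = 2\dim\g - 3\rk\g$... which does not match. Let me reconcile: the real dimension claimed is $4(\dim\g - 2\rk\g) = 4\dim\g - 8\rk\g$, so the complex dimension should be $2\dim\g - 4\rk\g$. The discrepancy means the generic stabilizer is larger or the quotient is by the full $T\times T$ picture: indeed $\D(\g) = T^*G\slll{}(T_K\times T_K)$, and reduction in two stages removes $2\rk\g$ complex dimensions from $\mu_\C$ and $2\rk\g$ real (i.e. $\rk\g$ complex after combining with the stability/$\mu_\R$ cut, matched with the group dimension) --- so I would instead compute directly from $T^*G$: $\dim_\C T^*G = 2\dim\g$, the complex moment map $\mu_\C$ for $T\times T$ drops $2\rk\g$, and quotienting by $(T\times T)_\C$ drops another $2\rk\g$, giving $\dim_\C\D(\g)_{\mathrm{top}} = 2\dim\g - 4\rk\g$, hence real dimension $4\dim\g - 8\rk\g = 4(\dim\g - 2\rk\g)$, as claimed. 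The main work is to verify that on the top stratum the action is (finite-to-one, hence) of the full expected dimension and $\mu_\C$ is a submersion there, i.e. that the generic fibre behaves as dimension count predicts; I would exhibit an explicit point (e.g. the one built in Lemma \ref{tpk0qmasjr} from $X = \sum_{\alpha\in\Phi}X_\alpha$ with all $X_\alpha\ne 0$, or a suitable regular element) where the differential of $\mu_\C$ is surjective onto $(\ft_\C^*)^2$ and the $(T\times T)_\C$-orbit is of full dimension $2\rk\g$.

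For connectedness, I would argue as follows. Upstairs, $\mu_\C^{-1}(0)^{\mathrm{top}}$ (the locus where $\Phi(X,Y)=\Phi$, i.e. all root components nonzero in the appropriate sense) fibres over $G$ --- projection to the $G$-factor of $G\times\ft^\circ$ --- with connected fibres (open dense subsets of the linear space $\ft^\circ$), and $G$ is connected, so the total space is connected; alternatively, use that an irreducible variety minus a proper closed subset is irreducible, hence connected, applied to the irreducible $M$ (note $M$ is a vector bundle over the irreducible $\cO$, hence irreducible). Then $\D(\g)_{\mathrm{top}}$ is the image of a connected set under the continuous quotient map, hence connected. In fact the cleanest route: $M = T\cO\cap(\g\times\ft^\perp)$ is irreducible (it is the total space of a vector bundle over $\cO$), $M\ps_{Z_\Phi}$ is a nonempty Zariski-open subset of it (nonempty by Lemma \ref{tpk0qmasjr}), hence irreducible, hence connected; its image $\D(\g)_{\mathrm{top}}$ in the quotient is therefore connected.

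The main obstacle I anticipate is the dimension bookkeeping: making sure the moment-map and group-action dimension drops are counted consistently (complex vs.\ real, one-stage vs.\ two-stage reduction, and the role of the finite stabilizer $Z_\Phi$), and cleanly justifying that the complex moment map $\mu_\C$ is a submersion and the $(T\times T)_\C$-action has orbits of full dimension at a generic (polystable) point of the zero locus. Everything else --- Zariski-openness and connectedness --- follows quickly from Proposition \ref{b65xnhncei}, Lemma \ref{tpk0qmasjr}, and irreducibility of the vector bundle $M\to\cO$.
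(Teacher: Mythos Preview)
For Zariski-openness and the dimension count your approach coincides with the paper's. The paper computes the dimension from $T\cO$ rather than from $T^*G$: the projection $T\cO\to\ft$, $(X,Y)\mapsto\pi_\ft(Y)$ is generically a submersion, so $\dim_\C M = \dim_\C T\cO - \rk\g = 2\dim\g - 3\rk\g$, and quotienting by the remaining $T$ removes one more $\rk\g$. Your first (abandoned) count went wrong precisely because it omitted the passage $G\times\ft^\circ \to G\times_T\ft^\circ\cong T^*\cO$, which accounts for the missing $\rk\g$; your second count from $T^*G$ is correct and equivalent.

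Your connectedness argument, however, has a genuine gap: $M=T\cO\cap(\g\times\ft^\perp)$ is \emph{not} a vector bundle over $\cO$. The fibre over $X=\Ad_g\tau$ is the linear subspace $T_X\cO\cap\ft^\perp$, and its dimension jumps---over $X=\tau$ one has $T_\tau\cO=\ft^\perp$, so the fibre is all of $\ft^\perp$ (dimension $\dim\g-\rk\g$), whereas over generic $X$ it has dimension $\dim\g-2\rk\g$. The same objection applies to your first alternative: the fibres of $\mu_\C^{-1}(0)\to G$ are linear subspaces of $\ft^\circ$ of varying dimension, not open dense subsets of $\ft^\circ$. The paper takes a different route, arguing at the level of the quotient: $M$ is connected (each fibre is a linear space through $0$ and $\cO$ is connected, so everything is joined through the zero section), hence so is the GIT quotient $\D(\g)=\Spec\C[M]^T$, and then $\D(\g)_{\mathrm{top}}$ is connected because it is Zariski-open in the connected $\D(\g)$. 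If you want to salvage the irreducibility route you would need to show directly that the higher-dimensional special fibres of $M\to\cO$ lie in the closure of the generic locus.
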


\begin{proof}
By Proposition \ref{b65xnhncei}, $\D(\g)_{\mathrm{top}}$ is Zariski-locally-closed in $\D(\g)$. But its Zariski-closure is the same as its standard closure which is $\D(\g)$, so $\D(\g)_{\mathrm{top}}$ is Zariski-open in $\D(\g)$. Since $\D(\g)$ is a GIT quotient of the connected space $T\cO\cap(\g\times\ft^\perp)$, it is connected. Thus, $\D(\g)_{\mathrm{top}}$ is connected since it is Zariski-open in $\D(\g)$. The set of stable points of $T\cO\cap(\g\times\ft^\perp)$ is non-empty (for example, it contains $(\tau,X)$ if $X_\alpha\neq 0$ for all $\alpha\in\Phi$), so the complex algebraic dimension of $\D(\g)$ is $\dim_\C(T\cO\cap(\g\times\ft^\perp))-\rk\g$. Moreover, the smooth map $T\cO\to\ft$, $(X,Y)\mto \pi_\ft(Y)$ (where $\pi_\ft$ is the orthogonal projection $\g\to\ft$) has full rank at a generic point, so
\[
\dim_\C(T\cO\cap(\g\times\ft^\perp))=\dim_\C(T\cO)-\rk\g=2\dim\g-3\rk\g,
\]
and hence $\dim_\C\D(\g)=2\dim\g-4\rk\g$. Since $\D(\g)_{\mathrm{top}}$ is open in $\D(\g)$, it has the same complex dimension.
\end{proof}

Let $\Psi$ be a root subsystem of $\Phi$, $W_\Psi$ be the Weyl group of $\Psi$, and $|W_\Phi:W_\Psi|$ the index of $W_\Psi$ in $W_\Phi$. The rest of this section is devoted to the proof that $\D(\g)_\Psi$ is a disjoint union of $|W_\Phi:W_\Psi|$ copies of $\D(\g_\Psi)_{\mathrm{top}}$ (Theorem \ref{6bfrjygjvw}).

\begin{lemma}
The connected subgroup $G_\Psi$ of $G$ with Lie algebra $\g_\Psi$ is closed and has a compact real form $K_\Psi\s K$.
\end{lemma}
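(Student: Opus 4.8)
The plan is to show that the connected subgroup $G_\Psi \subseteq G$ integrating $\g_\Psi$ is closed by exhibiting it as the identity component of the fixed-point set of a torus, and then to produce its compact real form by intersecting with $K$. First I would recall the structure of $\g_\Psi = \ft_\Psi \oplus \bigoplus_{\alpha\in\Psi}\g_\alpha$ from \eqref{yepi9uvjbz}. The centralizer $\mathfrak{z}_\g(\ft_\Psi^\perp)$ of the subspace $\ft_\Psi^\perp \cap \ft$ inside $\ft$ is exactly $\g_\Psi$ together with the central part $\ft_\Psi^{\perp}\cap \ft$: more precisely, consider the subtorus $S \subseteq T$ whose Lie algebra is (the real form of) $\ft \cap \ft_\Psi^\perp$, i.e.\ $S = \bigcap_{\alpha\in\Psi}\ker(\alpha)^\circ$. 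Then the fixed-point subgroup $G^S$ is a connected reductive subgroup of $G$ (fixed-point sets of tori acting on connected groups are connected, by a standard result, e.g.\ Steinberg), it is automatically closed, and its Lie algebra is $\g^S = \mathfrak{z}_\g(\mathrm{Lie}(S)) = \ft \oplus \bigoplus_{\alpha\in\Psi}\g_\alpha$. This is not quite $\g_\Psi$ — it has $\ft$ instead of $\ft_\Psi$ — but $G_\Psi$ is the derived subgroup of $G^S$, which is again closed (the derived subgroup of a connected reductive group is closed semisimple) with Lie algebra $[\g^S,\g^S] = \ft_\Psi \oplus \bigoplus_{\alpha\in\Psi}\g_\alpha = \g_\Psi$. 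Hence $G_\Psi$ is closed.

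Next I would produce the compact real form. Since $K$ is a compact real form of $G$, the subtorus $S$ above is the complexification of $S_K := S \cap K$, a subtorus of $T_K$. The fixed-point group $K^{S_K}$ is a closed (compact) connected subgroup of $K$ whose complexification is $G^S$ — one checks on Lie algebras that $\mathrm{Lie}(K^{S_K}) = \fk \cap \g^S$ is a compact real form of $\g^S$, and then uses that a closed connected subgroup of $K$ is determined by its Lie algebra together with connectedness. Set $K_\Psi := [K^{S_K}, K^{S_K}]$, the derived subgroup; this is a compact connected subgroup of $K$ with Lie algebra $[\fk\cap\g^S, \fk\cap\g^S]$, which is a compact real form of $\g_\Psi = [\g^S,\g^S]$. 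By uniqueness of connected subgroups with a given Lie algebra (inside a fixed ambient group, here $G$), $K_\Psi$ is precisely the compact real form of $G_\Psi$ sitting inside $G_\Psi \cap K$, and in particular $K_\Psi \subseteq K$.

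An alternative, more elementary route avoids the fixed-point-of-torus input: one can instead invoke directly that $\g_\Psi$ is a regular \emph{semisimple} subalgebra (Proposition following \eqref{yepi9uvjbz}) normalized by $\ft$, and that any semisimple subalgebra of a complex semisimple Lie algebra integrates to a closed connected subgroup (a standard fact, since semisimple subgroups of reductive groups are closed — e.g.\ because the orbit map is proper, or via the fact that they equal their own normalizers' identity components composed with an algebraicity argument). For the real form, one uses the general principle that if $\h \subseteq \g$ is a subalgebra stable under the Cartan involution $\theta$ defining $\fk$, then $\h \cap \fk$ is a compact real form of $\h$ whenever $\h$ is semisimple; here $\g_\Psi$ is visibly $\theta$-stable because $\theta$ preserves $\ft$ and sends $\g_\alpha$ to $\g_{-\alpha}$, and $\Psi = -\Psi$. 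Then $K_\Psi$ is the connected subgroup of $K$ with Lie algebra $\g_\Psi \cap \fk$.

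The main obstacle is the closedness of $G_\Psi$: without care one only gets an immersed subgroup. The cleanest fix is the fixed-point-of-subtorus description, since fixed-point subgroups are Zariski-closed and their connectedness is classical; handing the central-torus discrepancy by passing to the derived subgroup is then routine. Everything about the compact real form is then soft, relying only on $\theta$-stability (equivalently $\Psi = -\Psi$) and the uniqueness of connected subgroups with prescribed Lie algebra.
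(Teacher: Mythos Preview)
Your first approach via the torus centralizer has a genuine gap. You take $S$ to be the subtorus with Lie algebra $\ft\cap\ft_\Psi^\perp$ and claim that $\g^S=\ft\oplus\bigoplus_{\alpha\in\Psi}\g_\alpha$. But the root spaces appearing in $\g^S$ are those $\g_\alpha$ with $\alpha|_{\mathrm{Lie}(S)}=0$, i.e.\ with $\alpha\in\Span_\R\Psi$; the defining property of a root subsystem only gives $\Psi=\Phi\cap\Span_\Z\Psi$, not $\Psi=\Phi\cap\Span_\R\Psi$. These differ precisely when $\Psi$ has full rank in $\Phi$ yet $\Psi\neq\Phi$, and such subsystems are ubiquitous in the paper: the long-root $A_2$ inside $G_2$, the $A_1^2$ inside $B_2$, the $D_4$ inside $B_4$ or $F_4$, etc. In each of these cases $\ft_\Psi=\ft$, so $S$ is trivial, $G^S=G$, and $[G^S,G^S]=G\neq G_\Psi$. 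The centralizer-of-a-torus construction therefore only recovers the \emph{Levi}-type subsystems $\Phi\cap\Span_\R\Psi$, not arbitrary root subsystems.

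Your alternative route is correct and is essentially what the paper does, though in a different order. The paper first observes (via nondegeneracy of the Killing form on $\g_\Psi$, equivalently your $\theta$-stability argument using $\Psi=-\Psi$) that $\g_\Psi=(\fk_\Psi)_\C$ with $\fk_\Psi=\g_\Psi\cap\fk$; then it invokes the fact that semisimple subgroups of \emph{compact} Lie groups are closed to get $K_\Psi\subseteq K$ closed, and finally sets $G_\Psi:=(K_\Psi)_\C$, which is automatically closed in $G=K_\C$. You instead argue closedness directly on the complex side (semisimple subgroups of reductive groups are closed) and then produce the compact form afterwards. Both are fine; the paper's version has the mild advantage that the compact-group fact it cites is slightly more elementary.
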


\begin{proof}
Since the Killing form of $\g$ remains nondegenerate on $\g_\Psi$ (Proposition \ref{utfyudsumb}) we have $\g=\g_\Psi\oplus\g_\Psi^\perp$ and it follows that $\g_\Psi=(\fk_\Psi)_\C$ where $\fk_\Psi=\g_\Psi\cap\fk$. Moreover, semisimple subgroups of compact Lie groups are closed, so the connected subgroup $K_\Psi$ of $K$ with Lie algebra $\fk_\Psi$ is closed. Hence, $G_\Psi:=(K_\Psi)_\C$ is a closed connected subgroup of $G$ with Lie algebra $\g_\Psi$. 
\end{proof}

Recall that $\ft_\Psi$ is the span of the coroots of $\Psi$ and is a Cartan subalgebra of $\g_\Psi$. Let $\mathfrak{z}_\Psi=\Lie(Z_\Psi)=\{h\in\ft:\alpha(h)=0,\forall \alpha\in\Psi\}$ so that $\ft=\mathfrak{z}_\Psi\oplus\ft_\Psi$. Then, $\mathfrak{z}_\Psi\oplus\g_\Psi=\allowbreak\ft\oplus\bigoplus_{\alpha\in\Psi}\g_\alpha$ and is a reductive Lie algebra with semisimple factor $\g_\Psi$. 

\begin{lemma}\label{4b4w3eltv5}
The intersection $\cO\cap(\mathfrak{z}_\Psi\oplus\g_\Psi)$ has $|W_\Phi:W_\Psi|$ connected components, each of the form $\zeta+\cO_\Psi$ for some $\zeta\in\mathfrak{z}_\Psi$ and $\cO_\Psi$ a regular semisimple $G_\Psi$-orbit in $\g_\Psi$.
\end{lemma}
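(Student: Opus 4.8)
The plan is to analyze the intersection $\cO\cap(\mathfrak{z}_\Psi\oplus\g_\Psi)$ by decomposing it according to which adjoint $G_\Psi$-orbit (shifted by $\mathfrak{z}_\Psi$) a given element lies in. Recall that $\cO = G\cdot\tau$ for a fixed $\tau\in\ft\reg$, and write $\mathfrak{r}_\Psi := \mathfrak{z}_\Psi\oplus\g_\Psi = \ft\oplus\bigoplus_{\alpha\in\Psi}\g_\alpha$, a reductive subalgebra with center containing $\mathfrak{z}_\Psi$ and semisimple part $\g_\Psi$. First I would observe that any element $X\in\cO\cap\mathfrak{r}_\Psi$ is semisimple (it is conjugate to $\tau$) and regular in $\g$, hence in particular regular semisimple as an element of $\mathfrak{r}_\Psi$; its centralizer in $\mathfrak{r}_\Psi$ is therefore a Cartan subalgebra of $\mathfrak{r}_\Psi$, so $X$ lies in some Cartan subalgebra $\mathfrak{c}$ of $\mathfrak{r}_\Psi$. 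All such Cartan subalgebras are $R_\Psi$-conjugate, where $R_\Psi$ is the connected subgroup with Lie algebra $\mathfrak{r}_\Psi$ (equivalently $Z_\Psi^\circ\cdot G_\Psi$), and $\ft$ is one of them; so after acting by $R_\Psi$ we may assume $X\in\ft$. This shows $\cO\cap\mathfrak{r}_\Psi = R_\Psi\cdot(\cO\cap\ft)$.

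Next I would identify $\cO\cap\ft$ with a Weyl-group orbit. Since $\cO = G\cdot\tau$ and two elements of $\ft$ are $G$-conjugate iff they are $W_\Phi$-conjugate, we have $\cO\cap\ft = W_\Phi\cdot\tau$, a set of $|W_\Phi|$ points (as $\tau$ is regular). Now decompose this finite set under the action of $W_\Psi\subseteq W_\Phi$: it breaks into $|W_\Phi:W_\Psi|$ orbits, each of size $|W_\Psi|$. For each such $W_\Psi$-orbit, pick a representative $w\tau$ and write $w\tau = \zeta_w + \tau_w$ with $\zeta_w\in\mathfrak{z}_\Psi$ and $\tau_w\in\ft_\Psi$ (using $\ft = \mathfrak{z}_\Psi\oplus\ft_\Psi$). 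Since $w\tau$ is regular in $\g$, in particular $\alpha(w\tau)\neq 0$ for all $\alpha\in\Psi$, so $\alpha(\tau_w)\neq 0$ for all $\alpha\in\Psi$, i.e.\ $\tau_w$ is regular in $\g_\Psi$. Because $W_\Psi$ fixes $\mathfrak{z}_\Psi$ pointwise and acts on $\ft_\Psi$ as the Weyl group of $\g_\Psi$, the $W_\Psi$-orbit of $w\tau$ is exactly $\zeta_w + W_\Psi\cdot\tau_w$. Applying $R_\Psi = Z_\Psi^\circ\cdot G_\Psi$ (which fixes $\mathfrak{z}_\Psi$ and acts on $\g_\Psi$ through $G_\Psi$), and using that $G_\Psi\cdot\tau_w\cap\ft_\Psi = W_\Psi\cdot\tau_w$, we get that the corresponding piece of $\cO\cap\mathfrak{r}_\Psi$ is $\zeta_w + G_\Psi\cdot\tau_w =: \zeta_w + \cO_{\Psi,w}$, where $\cO_{\Psi,w}$ is a regular semisimple $G_\Psi$-orbit in $\g_\Psi$.

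It then remains to check that these $|W_\Phi:W_\Psi|$ pieces $\zeta_w + \cO_{\Psi,w}$ are precisely the connected components: each is connected (a translate of an orbit of the connected group $G_\Psi$), and they are disjoint because two of them meeting would force $\zeta_w = \zeta_{w'}$ and $G_\Psi\cdot\tau_w = G_\Psi\cdot\tau_{w'}$, hence $W_\Psi\cdot\tau_w = W_\Psi\cdot\tau_{w'}$, hence $W_\Psi\cdot w\tau = W_\Psi\cdot w'\tau$, contradicting that $w,w'$ index distinct $W_\Psi$-orbits. Their union is all of $\cO\cap\mathfrak{r}_\Psi$ by the first paragraph, and since each is open and closed in the union (being one of finitely many disjoint connected pieces, each of the same dimension $\dim G_\Psi - \rk\g_\Psi = \dim\g_\Psi - \rk\g_\Psi$), they are the connected components.

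The main obstacle I anticipate is bookkeeping rather than conceptual: one must be careful that $R_\Psi\cdot(w\tau)$ does not inadvertently glue together two different $W_\Psi$-orbits — i.e.\ that the $\mathfrak{z}_\Psi$-component $\zeta_w$ is a genuine invariant of the $R_\Psi$-orbit (true because $R_\Psi$ acts trivially on $\mathfrak{z}_\Psi$ and preserves the decomposition $\mathfrak{r}_\Psi = \mathfrak{z}_\Psi\oplus\g_\Psi$ since $\mathfrak{z}_\Psi$ is central and $\g_\Psi = [\mathfrak{r}_\Psi,\mathfrak{r}_\Psi]$), and that distinct $W_\Psi$-orbits in $W_\Phi\cdot\tau$ really do give distinct $R_\Psi$-orbits. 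A secondary point to verify cleanly is the regularity claim $\tau_w\in(\ft_\Psi)\reg$, which uses only that $\alpha(\tau_w) = \alpha(w\tau)$ for $\alpha\in\Psi$ (since $\alpha$ vanishes on $\mathfrak{z}_\Psi$) together with regularity of $w\tau$ in $\g$.
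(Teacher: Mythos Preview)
Your argument is correct and follows essentially the same route as the paper: both decompose $\cO\cap(\mathfrak{z}_\Psi\oplus\g_\Psi)$ into $G_\Psi$-orbits of the form $\zeta+\cO_\Psi$ and count them via the intersection with $\ft$, using $\cO\cap\ft=W_\Phi\cdot\tau$ and the fact that each regular semisimple $G_\Psi$-orbit meets $\ft_\Psi$ in a single $W_\Psi$-orbit; your version simply makes the bijection with $W_\Psi$-cosets explicit rather than phrasing it as a pure count. One small gap: ``finitely many disjoint connected pieces of the same dimension'' does not by itself make each piece clopen---the clean justification (which the paper uses) is that semisimple orbits are closed, so each $\zeta_w+\cO_{\Psi,w}$ is closed in the intersection, and a finite disjoint union of closed sets has each summand clopen.
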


\begin{proof}
Since $\cO\cap(\mathfrak{z}_\Psi\oplus\g_\Psi)$ is $G_\Psi$-invariant, it is a union of $G_\Psi$-orbits. But $G_\Psi$ acts trivially on $\mathfrak{z}_\Psi$, so they are of the form $\zeta+\cO_\Psi$ where $\zeta\in\mathfrak{z}_\Psi$ and $\cO_\Psi$ is a regular semisimple $G_\Psi$-orbit in $\g_\Psi$. From the classification of semisimple orbits \cite[\S2.2]{col93}, $\cO_\Psi$ intersects $\ft_\Psi$ in precisely $|W_\Psi|$ elements. Hence $(\zeta+\cO_\Psi)\cap\ft$ contains exactly $|W_\Psi|$ elements. But $(\cO\cap(\mathfrak{z}_\Psi\oplus\g_\Psi))\cap\ft=\cO\cap\ft$ contains exactly $|W_\Phi|$ elements, so there must be exactly $|W_\Phi|/|W_\Psi|=|W_\Phi:W_\Psi|$ of these $G_\Psi$-orbits. Moreover, semisimple orbits are closed, so $\zeta+\cO_\Psi$ is closed in $\cO\cap(\mathfrak{z}_\Psi\oplus\g_\Psi)$. Since there are only finitely many of them, they are also open in $\cO\cap(\mathfrak{z}_\Psi\oplus\g_\Psi)$, and hence they are the connected components.
\end{proof}

\begin{proposition}\label{ih2wo35ypo}
For all $\Psi\le\Phi$, $\D(\g)_\Psi$ is isomorphic as a hyperk\"ahler manifold to a disjoint union of $|W_\Phi:W_\Psi|$ copies of $\D(\g_\Psi)_{\mathrm{top}}$. 
\end{proposition}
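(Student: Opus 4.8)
The plan is to combine the algebraic description of $\D(\g)_\Psi$ obtained in the proof of Proposition \ref{b65xnhncei} with the decomposition of Lemma \ref{4b4w3eltv5}: this first gives an isomorphism of complex varieties $\D(\g)_\Psi\cong\bigsqcup\D(\g_\Psi)_{\mathrm{top}}$, which I would then promote to a hyperk\"ahler isomorphism by returning to the Nahm-equation picture of $T^*G$ and applying Lemma \ref{zkpb6ssaaq}.

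Recall from the proof of Proposition \ref{b65xnhncei} that, writing $\g'_\Psi=\ft\oplus\bigoplus_{\alpha\in\Psi}\g_\alpha=\mathfrak{z}_\Psi\oplus\g_\Psi$, $M=T\cO\cap(\g\times\ft^\perp)$ and $M_\Psi=M\cap(\g'_\Psi\times\g'_\Psi)$, one has $\D(\g)_\Psi=(M_\Psi)\ps_{Z_\Psi}/T$. By Lemma \ref{4b4w3eltv5} write $\cO\cap\g'_\Psi=\bigsqcup_{j=1}^{m}(\zeta_j+\cO_\Psi^{(j)})$, where $m=|W_\Phi:W_\Psi|$, $\zeta_j\in\mathfrak{z}_\Psi$, and $\cO_\Psi^{(j)}$ is a regular semisimple $G_\Psi$-orbit in $\g_\Psi$. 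Since the $\zeta_j+\cO_\Psi^{(j)}$ are open-and-closed in $\cO\cap\g'_\Psi$ and $M_\Psi$ maps to $\cO\cap\g'_\Psi$ by $(X,Y)\mapsto X$, this induces a decomposition $M_\Psi=\bigsqcup_{j=1}^m M_\Psi^{(j)}$ into $T$-invariant open-and-closed pieces, and hence $\D(\g)_\Psi=\bigsqcup_{j=1}^m (M_\Psi^{(j)})\ps_{Z_\Psi}/T$.

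The first main step is to identify each piece $(M_\Psi^{(j)})\ps_{Z_\Psi}/T$ with $\D(\g_\Psi)_{\mathrm{top}}$ as a complex variety. I would show that the shift $(\zeta_j+X',Y)\mapsto(X',Y)$ is an isomorphism of $T$-varieties
\[
M_\Psi^{(j)}\ \xrightarrow{\ \sim\ }\ T\cO_\Psi^{(j)}\cap(\g_\Psi\times\ft_\Psi^\perp),
\]
where on both sides $T$ acts through its quotient $T/Z_\Psi$ (equivalently the maximal torus $T_\Psi\subseteq G_\Psi$ acts, through $T_\Psi/Z(G_\Psi)$), and $\ft_\Psi^\perp$ denotes the orthogonal complement of $\ft_\Psi$ inside $\g_\Psi$. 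The only substantive point is the matching of cotangent fibres, namely that for $X=\zeta_j+X'$ with $X'\in\cO_\Psi^{(j)}$ one has $T_X\cO\cap\ft^\perp\cap\g'_\Psi=T_{X'}\cO_\Psi^{(j)}\cap\ft_\Psi^\perp$; this follows from the orthogonal decomposition $\g=\mathfrak{z}_\Psi\oplus\g_\Psi\oplus\mathfrak{m}$ with $\mathfrak{m}=\bigoplus_{\alpha\in\Phi\setminus\Psi}\g_\alpha$, from $[\mathfrak{z}_\Psi,\g'_\Psi]=0$, and from the fact that $\alpha\notin\Psi,\ \beta\in\Psi$ imply $\alpha+\beta\notin\Psi$, which together yield $[\g,X]=[\g_\Psi,X']\oplus[\mathfrak{m},X]$ with the summands in $\g_\Psi$ and $\mathfrak{m}$. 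Passing to GIT quotients, and using that a point of $M_\Psi^{(j)}$ has $T$-stabiliser exactly $Z_\Psi$ iff its image has trivial $T/Z_\Psi$-stabiliser (equivalently $T_\Psi$-stabiliser $Z(G_\Psi)$, which by \eqref{6hygdax5ph} applied to $\g_\Psi$ with $\Psi$ as its full root system is the orbit type defining $\D(\g_\Psi)_\Psi=\D(\g_\Psi)_{\mathrm{top}}$), one gets $(M_\Psi^{(j)})\ps_{Z_\Psi}/T\cong\D(\g_\Psi)_{\mathrm{top}}$. Summing over $j$ gives $\D(\g)_\Psi\cong\bigsqcup_{j=1}^m\D(\g_\Psi)_{\mathrm{top}}$ as complex varieties, and hence, by the isometric $\SO(3)$-action rotating the $2$-sphere of complex structures, for every complex structure in that sphere.

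The step I expect to be the main obstacle is matching the hyperk\"ahler metrics, since the metric does not appear in the GIT picture and one must return to the Nahm-equation description, and since it has to be done for each of the $m$ components. The plan is to use Lemma \ref{zkpb6ssaaq}. For the component $C$ containing $\tau_0$, take the inclusion $F:T^*G_\Psi\hookrightarrow T^*G$ induced by $\fk_\Psi\hookrightarrow\fk$ on solutions to Nahm's equations; it is isometric, tri-holomorphic and tri-symplectic, equivariant for $T_{K_\Psi}\times T_{K_\Psi}\hookrightarrow T_K\times T_K$, and it sends the zero set of the $(T_{K_\Psi}\times T_{K_\Psi})$-moment map into that of the $(T_K\times T_K)$-moment map, because the $\ft_\fk$-component of an element of $\fk_\Psi$ equals its $\ft_{\fk,\Psi}$-component ($\fk_\Psi\perp\mathfrak{z}_{\fk,\Psi}$). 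By the refined part of Lemma \ref{zkpb6ssaaq}, $F$ descends to a hyperk\"ahler map $\D(\g_\Psi)_{\mathrm{top}}\to\D(\g)_\Psi$, which by comparison with the complex-variety identification above is a bijection onto $C$, hence a hyperk\"ahler isomorphism onto it. To reach a component corresponding to a coset $W_\Psi w$, I would run the same argument for the composite of the Nahm inclusion $T^*G_{w^{-1}\Psi}\hookrightarrow T^*G$ with the left translation $(g,\xi)\mapsto(n_w g,\xi)$ of $T^*G$ by a lift $n_w\in N_K(T_K)$ of $w$ (this lies in the $K\times K$-symmetry, hence is a hyperk\"ahler isometry), identifying $\D(\g_{w^{-1}\Psi})_{\mathrm{top}}$ with $\D(\g_\Psi)_{\mathrm{top}}$ via the root-system isomorphism $\Ad_{n_w}\colon\g_{w^{-1}\Psi}\xrightarrow{\ \sim\ }\g_\Psi$. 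Letting $w$ run over coset representatives realises all $m$ components as hyperk\"ahler copies of $\D(\g_\Psi)_{\mathrm{top}}$, which is the claim; the remaining bookkeeping is to verify that the stabiliser and orbit-type data line up so that this composite does land on, and surject onto, the intended component.
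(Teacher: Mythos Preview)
Your approach is correct and mirrors the paper's: decompose $\D(\g)_\Psi$ via Lemma \ref{4b4w3eltv5}, then exhibit each component as the image of a hyperk\"ahler map built from the Nahm-equation inclusion together with the action of a Weyl-group representative, and invoke Lemma \ref{zkpb6ssaaq}. The only notable simplification in the paper is how the general component is reached: rather than composing with the \emph{left} translation $(n_w,1)$ and letting the domain vary as $T^*G_{w^{-1}\Psi}$ (which then forces the extra identification $\D(\g_{w^{-1}\Psi})_{\mathrm{top}}\cong\D(\g_\Psi)_{\mathrm{top}}$ via $\Ad_{n_w}$), the paper keeps the domain fixed as $G_\Psi\times\g_\Psi$ and uses the \emph{right} translation, i.e.\ the $(1,k^{-1})$-action $(g,X)\mapsto(gk,\Ad_{k^{-1}}X)$, for $k\in N_K(T_K)$ representing the Weyl element with $\zeta+h=\Ad_k\tau$. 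This map is equivariant for $(s,t)\mapsto(s,k^{-1}tk)$ and, under the identification of Proposition \ref{0ae71l8pfy}, descends directly to the shift $(X,Y)\mapsto(\zeta+X,Y)$; a single commutative-diagram check then replaces both your tangent-fibre computation and the separate identification step.
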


\begin{proof}
Let $T_{K_\Psi}$ be the maximal torus in $K_\Psi$ with Lie algebra $\ft_\Psi\cap\fk$ and $T_\Psi=(T_{K_\Psi})_\C$ the maximal torus in $G_\Psi$ with Lie algebra $\ft_\Psi$. Recall from the proof of Proposition \ref{b65xnhncei} that $\D(\g)_\Psi=(M_\Psi)\ps_{Z_\Psi}/T$, where $M=T\cO\cap(\g\times\ft^\perp)$ and $M_\Psi=M\cap((\mathfrak{z}_\Psi\oplus\g_\Psi)\times(\mathfrak{z}_\Psi\oplus\g_\Psi))$. Thus, by Lemma \ref{4b4w3eltv5}, $\D(\g)_\Psi$ is a disjoint union of sets of the form $(T(\zeta+\cO_\Psi)\cap(\g\times\ft^\perp))\ps_{Z_\Psi}/T$ for $\zeta\in\mathfrak{z}_\Psi$ and $\cO_\Psi\s\g_\Psi$. These sets are homeomorphic to the top stratum $\D(\g_\Psi)_{\mathrm{top}}=\allowbreak(T\cO_\Psi\cap(\g_\Psi\times\ft_\Psi^\perp))\ps_{Z_\Psi\cap T_\Psi}/T_\Psi$ via the map $(X,Y)\mto(\zeta+X,Y)$, and hence it suffices to show this map is hyperk\"ahler. We will show that it descends from a hyperk\"ahler map $T^*G_\Psi\to T^*G$ and conclude that it is hyperk\"ahler by Lemma \ref{zkpb6ssaaq}.

Since $\cO_\Psi$ is semisimple, we have $\cO_\Psi=G_\Psi\cdot h$ for some $h\in \ft_\Psi$. Thus, $\zeta+h\in\allowbreak(\zeta+\cO_\Psi)\cap\ft\s\cO\cap\ft$ and hence $\zeta+h=w\cdot\tau$ for some $w\in W$. Let $k\in N_K(T_K)$ be a representative of $w$. Then, the composition
\[
F:G_\Psi\times\g_\Psi\hooklongrightarrow G\times \g\too G\times\g,\quad(g,X)\mtoo(gk,\Ad_{k^{-1}}X)
\]
is hyperk\"ahler since this is just the action of $(1,k^{-1})\in K\times K$. Moreover, it is equivariant with respect to the Lie group homomorphism $T_{K_\Psi}\times T_{K_\Psi}\to T_K\times T_K$, $(s,t)\mto(s,k^{-1}tk)$ and maps the zero level set of the moment for $T_{K_\Psi}\times T_{K_\Psi}$ to that of $T_K\times T_K$. Therefore, Lemma \ref{zkpb6ssaaq} applies and we get a continuous map $\bar{F}:\D(\g_\Psi)\to\D(\g)$ which is hyperk\"ahler on the strata that it preserves. For all $g\in G_\Psi$ we have $\zeta+\Ad_gh=\Ad_g(\zeta+h)=\Ad_{gk}\tau$, so the diagram
\[
\begin{tikzcd}
T^*G_\Psi\slll{}(T_{K_\Psi}\times T_{K_\Psi}) \arrow{r}{\bar{F}}\arrow{d}{\cong} & T^*G\slll{}(T_K\times T_K) \arrow{d}{\cong}\\
(T\cO_\Psi\cap(\g_\Psi\times\ft_\Psi^\perp))\ps/T_\Psi \arrow{r} & (T\cO\cap(\g\times\ft^\perp))\ps/T
\end{tikzcd}
\]
commutes, where the bottom map is $(\Ad_gh,\Ad_gX) \mto (\zeta+\Ad_gh,\Ad_gX)$ and the vertical maps are the usual homeomorphism as in Proposition \ref{0ae71l8pfy}. The restriction of the bottom map to $\D(\g_\Psi)_{\mathrm{top}}$ is the map considered earlier, and hence $\D(\g_\Psi)_{\mathrm{top}}$ is hyperk\"ahler isomorphic to the connected components of $\D(\g)_\Psi$ corresponding to $\zeta+\cO_\Psi$.
\end{proof}

\begin{corollary}\label{atk1mpq9ez}
The stratum $\D(\g)_{\mathrm{bottom}}:=\D(\g)_{\emptyset}$ is a finite set of $|W_\Phi|$ elements.
\end{corollary}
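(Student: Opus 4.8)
The plan is to deduce this immediately from Proposition~\ref{ih2wo35ypo} applied to the trivial root subsystem $\Psi=\emptyset$. First I would check that $\emptyset$ is a root subsystem of $\Phi$ (both defining conditions are vacuous and $(\Span_\Z\emptyset)\cap\Phi=\emptyset$), so the proposition applies. For this $\Psi$ one has $\ft_\emptyset=\Span\{h_\alpha:\alpha\in\emptyset\}=0$, hence $\g_\emptyset=\ft_\emptyset\oplus\bigoplus_{\alpha\in\emptyset}\g_\alpha=0$ is the zero Lie algebra, and its Weyl group $W_\emptyset$ is generated by the empty set of reflections and is therefore trivial, so $|W_\Phi:W_\emptyset|=|W_\Phi|$.

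The only remaining point is to see that $\D(\g_\emptyset)_{\mathrm{top}}=\D(0)_{\mathrm{top}}$ is a single point. This is clear: $\D(0)$ is the hyperk\"ahler quotient of $T^*\{e\}$, which is a point, by the trivial torus, hence a point; and since the empty root system has $\emptyset$ as its only root subsystem, $\D(0)_{\mathrm{top}}=\D(0)$ is that point. (Equivalently, in the model of Proposition~\ref{0ae71l8pfy} one takes $\cO_0=\{0\}$ and gets $\D(0)_{\mathrm{top}}=(T^*\cO_0\cap(0\times 0))\ps/1$, a single point.) By Proposition~\ref{ih2wo35ypo}, $\D(\g)_\emptyset$ is then a disjoint union of $|W_\Phi|$ copies of a point, i.e.\ a finite set of $|W_\Phi|$ elements.

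As a self-contained alternative I would compute the stratum directly. Since $Z_\emptyset=T$, the stratum $\D(\g)_\emptyset=(T^*\cO\cap(\g\times\ft^\perp))\ps_{T}/T$ consists of the $T$-fixed polystable points modulo the trivial $T$-action. By Lemma~\ref{6n1bilpsfe} a point $(X,Y)$ has stabilizer $T$ exactly when $\Phi(X,Y)=\emptyset$, i.e.\ when $X,Y\in\ft$; together with $Y\in\ft^\perp$ this forces $Y=0$ and $X\in\cO\cap\ft$, and each such $(X,0)$ is a fixed point, so its orbit is closed and it is polystable. As $\cO=G\cdot\tau$ with $\tau\in\ft\reg$, the intersection $\cO\cap\ft=W_\Phi\cdot\tau$ has exactly $|W_\Phi|$ elements, which gives the count. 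The only real work in either approach is careful bookkeeping in the degenerate case $\Psi=\emptyset$ ($\g_\emptyset=0$, $W_\emptyset$ trivial, $Z_\emptyset=T$), and checking that the ``disjoint union of copies of $\D(\g_\Psi)_{\mathrm{top}}$'' statement of Proposition~\ref{ih2wo35ypo} degenerates correctly to counting a finite set; I would not expect any geometric or analytic obstacle here.
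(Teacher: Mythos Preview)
Your first argument is essentially the paper's: apply Proposition~\ref{ih2wo35ypo} with $\Psi=\emptyset$ and note that $\D(0)_{\mathrm{top}}$ is a point. The only cosmetic difference is that the paper justifies this last step by invoking Proposition~\ref{n25r7847fc} (connectedness plus the dimension formula $4(\dim\g-2\rk\g)=0$ when $\g=0$), whereas you unwind the definition directly; your self-contained alternative via $\cO\cap\ft=W_\Phi\cdot\tau$ is also correct and gives a pleasant independent check, though the paper does not pursue it.
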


\begin{proof}
By Proposition \ref{ih2wo35ypo}, $\D(\g)_{\mathrm{bottom}}$ is a disjoint union of $|W_\Phi|$ copies of $\D(\g_\emptyset)_{\mathrm{top}}\allowbreak=\D(0)_{\mathrm{top}}$, and $\D(0)_{\mathrm{top}}$ is just a point by Proposition \ref{n25r7847fc}.
\end{proof}

Proposition \ref{n25r7847fc}, Proposition \ref{ih2wo35ypo} and Corollary \ref{atk1mpq9ez} together prove Theorem \ref{6bfrjygjvw}.

\subsection{A coarser stratification}

The goal of this section is to prove Theorem \ref{xvislcts8s} about the coarser partition $\cP=\{\D(\g)_{[\h]}:[\h]\in\mathcal{C}_\g\}$. Let $[\h]=[\g_\Psi]\in\mathcal{C}_\g$ where $\Psi\le \Phi$. Then, by Proposition \ref{cv6qokpmij}, we have
\[
\D(\g)_{[\g_\Psi]}=\bigcup_{w\in W_\Phi}\D(\g)_{w\cdot\Psi}.
\]
Let $\{w\cdot\Psi:w\in W_\Phi\}=\{w_1\cdot \Psi,\ldots,w_n\cdot\Psi\}$, where the $w_i\cdot \Psi$'s are distinct. Then, $n|W_\Phi:W_\Psi|$ is the embedding number $m_\g(\g_\Psi)$ introduced in \S\ref{nj5e2b9c36}, and
\[
\D(\g)_{[\g_\Psi]}=\bigcup_{i=1}^n\D(\g)_{w_i\cdot\Psi}.
\]

\begin{lemma}\label{0vk6s011tr}
The above union is a topological disjoint union.
\end{lemma}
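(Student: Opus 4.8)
The plan is to show that the sets $\D(\g)_{w_i\cdot\Psi}$ are pairwise separated in the sense that their closures intersect them trivially, i.e.\ $\D(\g)_{w_i\cdot\Psi}\cap\overline{\D(\g)_{w_j\cdot\Psi}}=\emptyset$ for $i\ne j$. Since $\D(\g)$ is a stratification (Theorem \ref{2rzpnjamfd}), this will follow from the frontier condition once we know that no $\D(\g)_{w_i\cdot\Psi}$ lies in the closure of another $\D(\g)_{w_j\cdot\Psi}$. By Proposition \ref{b65xnhncei}, $\overline{\D(\g)_{w_j\cdot\Psi}}=\bigcup_{\chi\le w_j\cdot\Psi}\D(\g)_\chi$, so if $\D(\g)_{w_i\cdot\Psi}$ met this closure then by the frontier condition (or directly by the injectivity in Lemma \ref{x8jufuo3f0}) we would need $w_i\cdot\Psi=\chi\le w_j\cdot\Psi$ for some $\chi$, forcing $w_i\cdot\Psi\le w_j\cdot\Psi$.

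So the key step is the purely combinatorial claim: if $\Psi_1$ and $\Psi_2$ are two root subsystems in the same $W_\Phi$-orbit and $\Psi_1\le\Psi_2$, then $\Psi_1=\Psi_2$. This is immediate because $W_\Phi$ preserves the cardinality of a root subsystem, so $|\Psi_1|=|\Psi_2|$, and a subset of a finite set of the same cardinality is the whole set. Applying this with $\Psi_1=w_i\cdot\Psi$ and $\Psi_2=w_j\cdot\Psi$ shows $w_i\cdot\Psi\not\le w_j\cdot\Psi$ whenever $i\ne j$ (since they are distinct), hence $\D(\g)_{w_i\cdot\Psi}\not\le\D(\g)_{w_j\cdot\Psi}$ by Theorem \ref{zt3pxhz7ur}.

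It then remains to assemble this into a proof that the union $\bigcup_{i=1}^n\D(\g)_{w_i\cdot\Psi}$ is a topological disjoint union, i.e.\ that each $\D(\g)_{w_i\cdot\Psi}$ is open and closed in the subspace $\D(\g)_{[\g_\Psi]}$. Closedness: the closure in $\D(\g)$ of $\D(\g)_{w_i\cdot\Psi}$ is $\bigcup_{\chi\le w_i\cdot\Psi}\D(\g)_\chi$, and by the argument above the only member of this union of the form $w_j\cdot\Psi$ is $w_i\cdot\Psi$ itself; intersecting with $\D(\g)_{[\g_\Psi]}=\bigcup_j\D(\g)_{w_j\cdot\Psi}$ therefore gives back exactly $\D(\g)_{w_i\cdot\Psi}$, so it is closed in $\D(\g)_{[\g_\Psi]}$. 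Openness follows since $\D(\g)_{w_i\cdot\Psi}$ is the complement in $\D(\g)_{[\g_\Psi]}$ of the union of the finitely many other closed pieces $\D(\g)_{w_j\cdot\Psi}$, $j\ne i$.

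I do not expect any real obstacle here; the only thing to be careful about is invoking the right earlier results in the right order — Theorem \ref{zt3pxhz7ur} to translate between root subsystems and strata, Proposition \ref{b65xnhncei} for the description of closures, and the trivial cardinality observation for incomparability of distinct subsystems in a common Weyl orbit. One could alternatively argue entirely on the GIT side using that each $(M_{w_i\cdot\Psi})\ps_{Z_{w_i\cdot\Psi}}$ is Zariski-locally-closed in $M\ps/T$, but the topological argument via the frontier condition is cleaner and reuses exactly what has already been proved.
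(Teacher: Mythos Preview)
Your proof is correct and follows essentially the same approach as the paper: both use Proposition \ref{b65xnhncei} to reduce the question to showing that $w_i\cdot\Psi\le w_j\cdot\Psi$ forces equality, and then invoke the (implicit or explicit) cardinality observation that Weyl-conjugate root subsystems have the same size. You are a bit more explicit in spelling out the cardinality step and the clopen argument, but the substance is the same.
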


\begin{proof}
It suffices to show that if $u,v\in W_\Phi$ and $\overline{\D(\g)_{u\cdot\Psi}}\cap\D(\g)_{v\cdot\Psi}\neq\emptyset$ then $u\cdot\Psi=v\cdot\Psi$. By Proposition \ref{b65xnhncei}, we have
\[
\overline{\D(\g)_{u\cdot\Psi}}\cap\D(\g)_{v\cdot\Psi}= \bigcup_{\chi\le u\cdot\Psi}\D(\g)_{\chi}\cap\D(\g)_{v\cdot\Psi},
\]
so there exists $\chi\le u\cdot \Psi$ such that $\D(\g)_\chi\cap \D(\g)_{v\cdot\Psi}\neq\emptyset$. Hence, $v\cdot\Psi=\chi\s u\cdot \Psi$ so $v\cdot \Psi=u\cdot\Psi$. 
\end{proof}

In particular, Lemma \ref{0vk6s011tr} says that each piece $\D(\g)_{[\h]}$ in $\cP$ is a topological manifold and is locally closed. Moreover, by combining with Proposition \ref{ih2wo35ypo}, we get that $\D(\g)_{[\h]}$ has $m_\g(\h)$ connected components, each isomorphic to $\D(\h)_{\mathrm{top}}$ as a hyperk\"ahler manifold. Now, recall that $\mathcal{C}_\g$ has a partial order $\le$ induced by inclusion.

\begin{lemma}\label{qq0ms1ekr6}
For all $[\h]\in\mathcal{C}_\g$, we have
\[
\overline{\D(\g)_{[\h]}}=\bigcup_{[\q]\leq[\h]}\D(\g)_{[\q]}.
\]
\end{lemma}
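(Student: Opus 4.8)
The plan is to reduce the claimed identity about the coarse partition to the already-established frontier statement for the fine partition, Proposition \ref{b65xnhncei}, together with the dictionary between conjugacy classes of regular semisimple subalgebras and Weyl-orbits of root subsystems (Proposition \ref{cv6qokpmij}) and the definition of the partial order on $\mathcal{C}_\g$.

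First I would write $[\h]=[\g_\Psi]$ for some root subsystem $\Psi\le\Phi$, so that by definition $\D(\g)_{[\h]}=\bigcup_{w\in W_\Phi}\D(\g)_{w\cdot\Psi}$. Since closure commutes with finite unions, Proposition \ref{b65xnhncei} gives
\[
\overline{\D(\g)_{[\h]}}=\bigcup_{w\in W_\Phi}\ \bigcup_{\chi\le w\cdot\Psi}\D(\g)_\chi.
\]
Now the right-hand side is a union of fine strata $\D(\g)_\chi$, and I want to reorganize it as a union of coarse pieces $\D(\g)_{[\q]}$. The key observation is: a fine stratum $\D(\g)_\chi$ appears in the double union if and only if $\chi\le w\cdot\Psi$ for some $w\in W_\Phi$, i.e.\ $w^{-1}\cdot\chi\le\Psi$ for some $w$. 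If I set $\q=\g_\chi$, then $w^{-1}\cdot\chi\le\Psi$ means the regular subalgebra $\g_{w^{-1}\cdot\chi}$ is contained in $\g_\Psi=\h$, and since $\g_{w^{-1}\cdot\chi}$ is $W_\Phi$-conjugate — hence inner-conjugate in $\g$ — to $\g_\chi=\q$, this says precisely $[\q]\le[\h]$ in the order on $\mathcal{C}_\g$. Conversely, if $[\q]\le[\h]$ with $\q=\g_\chi$, then there is an inner automorphism $\varphi$ with $\varphi(\q)\s\h$; using that inner automorphisms preserving a Cartan subalgebra act through the Weyl group (and all Cartan subalgebras are conjugate), one finds $w\in W_\Phi$ with $w\cdot\chi\le\Psi$, so $\D(\g)_\chi$ appears. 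Moreover, once one fine stratum $\D(\g)_\chi$ with $\g_\chi\cong\q$ appears, all of $\D(\g)_{[\q]}=\bigcup_{w}\D(\g)_{w\cdot\chi}$ appears, because the double union over all $w\in W_\Phi$ is visibly $W_\Phi$-stable (replacing $\Psi$ by $w_0\cdot\Psi$ does not change $\bigcup_{w}\bigcup_{\chi\le w\cdot\Psi}$). Assembling these equivalences yields $\overline{\D(\g)_{[\h]}}=\bigcup_{[\q]\le[\h]}\D(\g)_{[\q]}$.

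Concretely, the steps in order are: (1) reduce to $[\h]=[\g_\Psi]$ and apply Proposition \ref{b65xnhncei} plus finiteness of the union; (2) show the resulting set is $W_\Phi$-invariant, so it is a union of whole coarse pieces $\D(\g)_{[\q]}$; (3) identify exactly which $[\q]$ occur, by translating ``$\exists w:\ \chi\le w\cdot\Psi$'' into the inclusion order on $\mathcal{C}_\g$ via Proposition \ref{cv6qokpmij} and Proposition \ref{utfyudsumb} (so that $\chi\le w\cdot\Psi \iff \g_{w^{-1}\cdot\chi}\s\g_\Psi$, and $\g_{w^{-1}\cdot\chi}$ is inner-conjugate to $\g_\chi$). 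The main obstacle is step (3): one must be careful that the a priori asymmetric-looking condition ``$\varphi(\q)\s\h$ for some \emph{inner} $\varphi$'' can always be realized, after conjugating both $\q$ and $\h$ into the fixed Cartan $\ft$, by an element of $W_\Phi$ rather than just an abstract inner automorphism — this is where one invokes conjugacy of Cartan subalgebras together with the fact that the normalizer of $\ft$ in the adjoint group induces exactly $W_\Phi$ on $\ft$. Everything else is a bookkeeping rearrangement of unions using results already proved.
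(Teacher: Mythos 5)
Your proof is correct and follows essentially the same route as the paper: write $[\h]=[\g_\Psi]$, use that closure commutes with the finite union $\D(\g)_{[\h]}=\bigcup_{w}\D(\g)_{w\cdot\Psi}$, apply Proposition \ref{b65xnhncei} to each term, and reorganize the resulting double union into coarse pieces via the correspondence of Proposition \ref{cv6qokpmij}. You are in fact somewhat more explicit than the paper about the one delicate point, namely that the condition ``$\chi\le w\cdot\Psi$ for some $w\in W_\Phi$'' matches the inclusion order $[\g_\chi]\le[\g_\Psi]$ on $\mathcal{C}_\g$ defined via inner automorphisms, which the paper's displayed chain of equalities uses silently.
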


\begin{proof}
Let $\Psi$ be such that $[\h]=[\g_\Psi]$. If we let $[\Psi]$ be the equivalence class of $\Psi$ in $\{\text{root subsystems of $\Phi$}\}/W_\Phi$, we get
\begin{align*}
\overline{\D(\g)_{[\g_\Psi]}} &= \bigcup_{w\in W} \overline{\D(\g)_{w\cdot \Psi}}=\bigcup_{w\in W} \bigcup_{\chi\le w\cdot\Psi}\D(\g)_\chi=\bigcup_{[\chi]\le[\Psi]}\bigcup_{w\in W}\D(\g)_{w\cdot\chi} \\
&= \bigcup_{[\chi]\le[\Psi]}\D(\g)_{[\g_\chi]} = \bigcup_{[\q]\leq[\h]}\D(\g)_{[\q]}.\qedhere
\end{align*}
\end{proof}

It is then immediate that $[\h_1]\le[\h_2]$ if and only if $\D(\g)_{[\h_1]}\le\D(\g)_{[\h_2]}$, so the map $\mathcal{C}_\g\to\cP$, $[\h]\mto\D(\g)_{[\h]}$ is an isomorphism of posets. Moreover, the frontier condition now follows easily as in \S\ref{f80rc1x9ex}. This concludes the proof of Theorem \ref{xvislcts8s}.

\section{Examples}\label{aejay646nw}

In this section we look at specific examples of $\D(\g)$ and describe their stratification into hyperk\"ahler manifolds explicitly. We will focus on the coarser stratification whose set of strata is isomorphic to the partially ordered set $\mathcal{C}_\g$ of conjugacy classes of regular semisimple subalgebras (Theorem \ref{xvislcts8s}). We will draw Hasse diagrams where each node is of the form $mL$ where $L$ is the Lie algebra isomorphism class of a conjugacy class $[\h]\in\mathcal{C}_\g$ and $m$ its embedding number. In other words, a node of the form $mL$ in the Hasse diagram represents a stratum which is isomorphic as a hyperk\"ahler manifold to a disjoint union of $m$ copies of $\D(L)_{\mathrm{top}}$. We write isomorphism classes of Lie algebras multiplicatively, e.g.\ $L=A_1^2B_2$ is $\mathfrak{sl}(2,\C)\oplus\mathfrak{sl}(2,\C)\oplus\mathfrak{so}(5,\C)$.

\subsection{$\fsl(2,\C)$}

The root system $\Phi$ of $\fsl(2,\C)$ embeds in $\R^2$ as:
\begin{center}
\begin{tikzpicture}[xscale=0.5, yscale=0.5]
    \foreach\ang in {0,180}{
     \draw (0,0) -- (\ang:2cm);
     \node[draw, circle, fill, inner sep=1pt] at (\ang:2cm) {};
    }
    \fill (0,0) circle [radius=1pt];
\end{tikzpicture}
\end{center}
Thus, the root subsystems are just $\Phi$ itself and the empty set. They both determine a conjugacy class in $\mathcal{C}_\g$, and the embedding numbers are $m(\fsl(2,\C))=1$ and $m(0)=|W|=2$, so the Hasse diagram is:
\[
\begin{tikzpicture}\ssmall
\node (0) at (0, 1) {$A_1$};
\node (1) at (0, 0) {$2$};
\draw (1) -- (0);
\end{tikzpicture}
\]
We can see from this diagram that $\D(\fsl(2,\C))$ consists of a smooth open dense subset and two isolated singularities. This agrees with Dancer's computation \cite[pp.\ 88--99]{dan93} that $\D(\fsl(2,\C))$ is the $D_2$-surface, i.e.\ the affine variety in $\C^3$ cut out by the equation $x^2-zy^2=z$.

\subsection{The exceptional Lie algebra $\g_2$}

The root system of $\g_2$ embeds in $\R^2$ as:
\begin{center}
\begin{tikzpicture}[xscale=0.5, yscale=0.5]
    \foreach\ang in {60,120,...,360}{
     \draw (0,0) -- (\ang:2cm);
     \node[draw, circle, fill, inner sep=1pt] at (\ang:2cm) {};
    }
    \foreach\ang in {30,90,...,330}{
     \draw (0,0) -- (\ang:1.155cm);
     \node[draw, circle, fill, inner sep=1pt] at (\ang:1.155cm) {};
    }
\end{tikzpicture}
\end{center}
The six long roots form an $A_2$ subsystem:
\begin{center}
\begin{tikzpicture}[xscale=0.5, yscale=0.5]
    \foreach\ang in {60,120,...,360}{
     \draw (0,0) -- (\ang:2cm);
     \node[draw, circle, fill, inner sep=1pt] at (\ang:2cm) {};
    }
    \foreach\ang in {30,90,...,330}{
     \draw (0,0) -- (\ang:1.155cm);
    }
\end{tikzpicture}
\end{center}
It is fixed by the Weyl group, so the embedding number is $|W_{G_2}|/|W_{A_2}|=12/6=2$. There are three $A_1^2$ subsystems:
\begin{center}
\begin{tikzpicture}[xscale=0.5, yscale=0.5]
    \foreach\ang in {60,120,...,360}{
     \draw (0,0) -- (\ang:2cm);
    }
    \foreach\ang in {30,90,...,330}{
     \draw (0,0) -- (\ang:1.155cm);
    }
    \node[draw, circle, fill, inner sep=1pt] at (0:2cm) {};
    \node[draw, circle, fill, inner sep=1pt] at (90:1.155cm) {};
    \node[draw, circle, fill, inner sep=1pt] at (180:2cm) {};
    \node[draw, circle, fill, inner sep=1pt] at (270:1.155cm) {};
\end{tikzpicture}
\qquad
\begin{tikzpicture}[xscale=0.5, yscale=0.5]
    \foreach\ang in {60,120,...,360}{
     \draw (0,0) -- (\ang:2cm);
    }
    \foreach\ang in {30,90,...,330}{
     \draw (0,0) -- (\ang:1.155cm);
    }
    \node[draw, circle, fill, inner sep=1pt] at (60:2cm) {};
    \node[draw, circle, fill, inner sep=1pt] at (150:1.155cm) {};
    \node[draw, circle, fill, inner sep=1pt] at (240:2cm) {};
    \node[draw, circle, fill, inner sep=1pt] at (330:1.155cm) {};
\end{tikzpicture}
\qquad
\begin{tikzpicture}[xscale=0.5, yscale=0.5]
    \foreach\ang in {60,120,...,360}{
     \draw (0,0) -- (\ang:2cm);
    }
    \foreach\ang in {30,90,...,330}{
     \draw (0,0) -- (\ang:1.155cm);
    }
    \node[draw, circle, fill, inner sep=1pt] at (120:2cm) {};
    \node[draw, circle, fill, inner sep=1pt] at (30:1.155cm) {};
    \node[draw, circle, fill, inner sep=1pt] at (300:2cm) {};
    \node[draw, circle, fill, inner sep=1pt] at (210:1.155cm) {};
\end{tikzpicture}
\end{center}
They are in the same $W$-orbit, so they determine a single conjugacy class in $\mathcal{C}_{\g_2}$ with embedding number $3|W_{G_2}|/|W_{A_1^2}|=3\cdot 12/4=9$. There are three $A_1$ subsystems formed by the long roots:
\begin{center}
\begin{tikzpicture}[xscale=0.5, yscale=0.5]
    \foreach\ang in {60,120,...,360}{
     \draw (0,0) -- (\ang:2cm);
    }
    \foreach\ang in {30,90,...,330}{
     \draw (0,0) -- (\ang:1.155cm);
    }
    \node[draw, circle, fill, inner sep=1pt] at (0:2cm) {};
    \node[draw, circle, fill, inner sep=1pt] at (180:2cm) {};
\end{tikzpicture}
\qquad
\begin{tikzpicture}[xscale=0.5, yscale=0.5]
    \foreach\ang in {60,120,...,360}{
     \draw (0,0) -- (\ang:2cm);
    }
    \foreach\ang in {30,90,...,330}{
     \draw (0,0) -- (\ang:1.155cm);
    }
    \node[draw, circle, fill, inner sep=1pt] at (60:2cm) {};
    \node[draw, circle, fill, inner sep=1pt] at (240:2cm) {};
\end{tikzpicture}
\qquad
\begin{tikzpicture}[xscale=0.5, yscale=0.5]
    \foreach\ang in {60,120,...,360}{
     \draw (0,0) -- (\ang:2cm);
    }
    \foreach\ang in {30,90,...,330}{
     \draw (0,0) -- (\ang:1.155cm);
    }
    \node[draw, circle, fill, inner sep=1pt] at (120:2cm) {};
    \node[draw, circle, fill, inner sep=1pt] at (300:2cm) {};
\end{tikzpicture}
\end{center}
They are in the same $W$-orbit and the embedding number is $3\cdot 12 / 2 = 18$. Similarly, the short roots form three $A_1$ subsystems
\begin{center}
\begin{tikzpicture}[xscale=0.5, yscale=0.5]
    \foreach\ang in {60,120,...,360}{
     \draw (0,0) -- (\ang:2cm);
    }
    \foreach\ang in {30,90,...,330}{
     \draw (0,0) -- (\ang:1.155cm);
    }
    \node[draw, circle, fill, inner sep=1pt] at (90:1.155cm) {};
    \node[draw, circle, fill, inner sep=1pt] at (270:1.155cm) {};
\end{tikzpicture}
\qquad
\begin{tikzpicture}[xscale=0.5, yscale=0.5]
    \foreach\ang in {60,120,...,360}{
     \draw (0,0) -- (\ang:2cm);
    }
    \foreach\ang in {30,90,...,330}{
     \draw (0,0) -- (\ang:1.155cm);
    }
    \node[draw, circle, fill, inner sep=1pt] at (150:1.155cm) {};
    \node[draw, circle, fill, inner sep=1pt] at (330:1.155cm) {};
\end{tikzpicture}
\qquad
\begin{tikzpicture}[xscale=0.5, yscale=0.5]
    \foreach\ang in {60,120,...,360}{
     \draw (0,0) -- (\ang:2cm);
    }
    \foreach\ang in {30,90,...,330}{
     \draw (0,0) -- (\ang:1.155cm);
    }
    \node[draw, circle, fill, inner sep=1pt] at (30:1.155cm) {};
    \node[draw, circle, fill, inner sep=1pt] at (210:1.155cm) {};
\end{tikzpicture}
\end{center}
which are $W$-conjugate and have embedding number $18$. Finally, the empty set
\begin{center}
\begin{tikzpicture}[xscale=0.5, yscale=0.5]
    \foreach\ang in {60,120,...,360}{
     \draw (0,0) -- (\ang:2cm);
    }
    \foreach\ang in {30,90,...,330}{
     \draw (0,0) -- (\ang:1.155cm);
    }
\end{tikzpicture}
\end{center}
forms a root subsystem with embedding number $|W_{G_2}|=12$. Therefore, the stratification structure of $\D(\g_2)$ can be written symbolically as:
\[
\begin{tikzpicture}[baseline=(current bounding box.center), yscale=.75]\ssmall
\node (0) at (1, 3) {$G_2$};
\node (3) at (0, 2) {$2A_2$};
\node (1) at (2, 2) {$9A_1^2$};
\node (5) at (0, 1) {$18A_1$};
\node (4) at (2, 1) {$18A_1$};
\node (2) at (1, 0) {$12$};
\draw (3) -- (0);
\draw (2) -- (4);
\draw (5) -- (1);
\draw (2) -- (5);
\draw (4) -- (1);
\draw (1) -- (0);
\draw (5) -- (3);
\end{tikzpicture}
\]
For example, we see from this diagram that on the boundary of the open dense stratum $\D(\g_2)_{\mathrm{top}}$ there are two copies of the space $\D(\fsl(3,\C))_{\mathrm{top}}$. Also, the set of most singular points is a finite set of $12$ elements.

\subsection{All simple Lie algebras of rank $\le 4$}\label{ixv2fty1a1}

The computations outlined for $\fsl(2,\C)$ and $\g_2$ can be implemented on a computer. We have done that for all simple Lie algebras of rank $\le 4$. The diagrams are arranged so that a node $m'L'$ is higher than a node $mL$ if and only if $\dim\D(L')_{\mathrm{top}}>\dim \D(L)_{\mathrm{top}}$.

\setlength\LTleft{-0.75cm}
\begin{longtable}{|c|c|}
\hline
Type & Hasse diagrams \\
 \hline
$A$ & \begin{tikzpicture}[baseline=(current bounding box.center)]\ssmall
\node (0) at (0, 1) {$A_1$};
\node (1) at (0, 0) {$2$};
\draw (1) -- (0);
\end{tikzpicture}
 \qquad \begin{tikzpicture}[yscale=1,baseline=(current bounding box.center)]\ssmall
\node (0) at (0, 2) {$A_2$};
\node (1) at (0, 1) {$9A_1$};
\node (2) at (0, 0) {$6$};
\draw (1) -- (0);
\draw (2) -- (1);
\end{tikzpicture} \qquad 
\begin{tikzpicture}[xscale=1.0, yscale=0.9, baseline=(current bounding box.center)]\ssmall
\node (2) at (0, 4) {$A_3$};
\node (1) at (-1, 3) {$16A_2$};
\node (3) at (1, 2) {$18A_1^2$};
\node (4) at (0, 1) {$72A_1$};
\node (0) at (0, 0) {$24$};
\draw (1) -- (2);
\draw (3) -- (2);
\draw (4) -- (1);
\draw (4) -- (3);
\draw (0) -- (4);
\end{tikzpicture} \qquad 
\begin{tikzpicture}[xscale=0.8, yscale=0.65, baseline=(current bounding box.center)]\ssmall
\node (1) at (1, 6) {$A_4$};
\node (5) at (2, 5) {$25A_3$};
\node (0) at (0, 4) {$100A_1A_2$};
\node (3) at (0, 3) {$200A_2$};
\node (2) at (2, 2) {$450A_1^2$};
\node (4) at (1, 1) {$600A_1$};
\node (6) at (1, 0) {$120$};
\draw (0) -- (1);
\draw (6) -- (4);
\draw (3) -- (0);
\draw (2) -- (0);
\draw (4) -- (3);
\draw (4) -- (2);
\draw (5) -- (1);
\draw (2) -- (5);
\draw (3) -- (5);
\end{tikzpicture} \\
\hline
$B$ &  \begin{tikzpicture}[xscale=0.8,baseline=(current bounding box.center)]\ssmall
\node (2) at (1, 3) {$B_2$};
\node (1) at (2, 2) {$2A_1^2$};
\node (4) at (2, 1) {$8A_1$};
\node (3) at (0, 1) {$8A_1$};
\node (0) at (1, 0) {$8$};
\draw (1) -- (2);
\draw (3) -- (2);
\draw (4) -- (1);
\draw (0) -- (3);
\draw (0) -- (4);
\end{tikzpicture} \enskip \begin{tikzpicture}[xscale=1.2,yscale=0.7,baseline=(current bounding box.center)]\ssmall
\node (3) at 	(3, 7) {$B_3$};
\node (2) at 	(3.5, 6) {$2A_3$};
\node (0) at 	(2.5, 5) {$18B_2$};
\node (9) at 	(4.2, 4) {$32A_2$};
\node (4) at 	(3, 3) {$18A_1^3$};
\node (7) at 	(3.8, 2) {$36A_1^2$};
\node (5) at 	(3, 2) {$72A_1^2$};
\node (8) at 	(4, 1) {$144A_1$};
\node (6) at 	(2, 1) {$72A_1$};
\node (1) at 	(3, 0) {$48$};
\draw (7) -- (4);
\draw (5) -- (4);
\draw (7) -- (0);
\draw (4) -- (3);
\draw (9) -- (2);
\draw (6) -- (0);
\draw (8) -- (9);
\draw (1) -- (8);
\draw (1) -- (6);
\draw (6) -- (5);
\draw (8) -- (7);
\draw (2) -- (3);
\draw (0) -- (3);
\draw (8) -- (5);
\draw (7) -- (2);
\end{tikzpicture} \enskip \begin{tikzpicture}[xscale=1,yscale=0.55,baseline=(current bounding box.center)]\ssmall
\node (17) at (4, 13) {$B_4$};
\node (16) at (2, 12) {$2D_4$};
\node (3) at (4.75, 11) {$32B_3$};
\node (7) at (6, 10) {$32A_1A_3$};
\node (12) at (0.5, 9) {$128A_3$};
\node (6) at (4, 9) {$64A_3$};
\node (15) at (2.5, 8) {$72A_1^2B_2$};
\node (1) at (2.65, 6.6) {$288A_1B_2$};
\node (2) at (4.3, 5.7) {$288B_2$};
\node (11) at (6, 5) {$512A_1A_2$};
\node (13) at (2.5, 4) {$1024A_2$};
\node (0) at (1.3, 4) {$72A_1^4$};
\node (9) at (1.5, 3) {$576A_1^3$};
\node (8) at (4.95, 3) {$576A_1^3$};
\node (18) at (5, 2) {$2304A_1^2$};
\node (14) at (1, 2) {$1152A_1^2$};
\node (5) at (3, 2) {$576A_1^2$};
\node (19) at (4, 1) {$768A_1$};
\node (10) at (2, 1) {$2304A_1$};
\node (4) at (3, 0) {$384$};
\draw (5) -- (9);
\draw (9) -- (1);
\draw (5) -- (6);
\draw (10) -- (5);
\draw (2) -- (1);
\draw (1) -- (15);
\draw (0) -- (16);
\draw (12) -- (16);
\draw (18) -- (8);
\draw (14) -- (9);
\draw (3) -- (17);
\draw (18) -- (1);
\draw (16) -- (17);
\draw (9) -- (0);
\draw (6) -- (7);
\draw (4) -- (19);
\draw (13) -- (12);
\draw (10) -- (18);
\draw (6) -- (3);
\draw (5) -- (8);
\draw (8) -- (15);
\draw (13) -- (11);
\draw (4) -- (10);
\draw (10) -- (13);
\draw (19) -- (18);
\draw (13) -- (6);
\draw (19) -- (2);
\draw (2) -- (3);
\draw (8) -- (7);
\draw (15) -- (17);
\draw (11) -- (7);
\draw (8) -- (3);
\draw (0) -- (15);
\draw (5) -- (2);
\draw (14) -- (12);
\draw (10) -- (14);
\draw (7) -- (17);
\draw (18) -- (11);
\draw (6) -- (16);
\end{tikzpicture} \\ 
\hline
$C$ & \begin{tikzpicture}[yscale=0.6, baseline=(current bounding box.center)]\ssmall
\node (3) at 	(3, 7) {$C_3$};
\node (8) at 	(3.5, 6) {$9A_1B_2$};
\node (0) at 	(3, 5) {$18B_2$};
\node (1) at 	(2, 4) {$32A_2$};
\node (6) at 	(4, 3) {$6A_1^3$};
\node (7) at 	(3, 2) {$72A_1^2$};
\node (5) at 	(4, 2) {$36A_1^2$};
\node (9) at 	(4, 1) {$72A_1$};
\node (4) at 	(2, 1) {$144A_1$};
\node (2) at 	(3, 0) {$48$};
\draw (4) -- (7);
\draw (1) -- (3);
\draw (6) -- (8);
\draw (2) -- (9);
\draw (5) -- (6);
\draw (7) -- (8);
\draw (8) -- (3);
\draw (5) -- (0);
\draw (0) -- (8);
\draw (9) -- (5);
\draw (9) -- (7);
\draw (4) -- (1);
\draw (2) -- (4);
\draw (4) -- (0);
\end{tikzpicture}\qquad\begin{tikzpicture}[yscale=0.55, baseline=(current bounding box.center)]\ssmall
\node (9) at (4.5, 13) {$C_4$};
\node (7) at (6.5, 12) {$16A_1C_3$};
\node (12) at (5, 11) {$32C_3$};
\node (10) at (4, 10) {$18B_2^2$};
\node (6) at (2, 9) {$128A_3$};
\node (0) at (8, 8) {$72A_1^2B_2$};
\node (19) at (6, 7) {$288A_1B_2$};
\node (16) at (3, 7) {$288A_1B_2$};
\node (14) at (5, 6) {$288B_2$};
\node (1) at (3.95, 5) {$512A_1A_2$};
\node (15) at (3.15, 4) {$1024A_2$};
\node (8) at (8, 4) {$24A_1^4$};
\node (3) at (7.75, 3) {$192A_1^3$};
\node (2) at (6, 3) {$576A_1^3$};
\node (18) at (2.5, 2) {$1152A_1^2$};
\node (17) at (5, 2) {$2304A_1^2$};
\node (13) at (7, 2) {$576A_1^2$};
\node (5) at (3.5, 1) {$2304A_1$};
\node (4) at (6, 1) {$768A_1$};
\node (11) at (4.75, 0) {$384$};
\draw (6) -- (9);
\draw (3) -- (8);
\draw (8) -- (0);
\draw (0) -- (7);
\draw (15) -- (1);
\draw (0) -- (10);
\draw (19) -- (0);
\draw (13) -- (2);
\draw (15) -- (12);
\draw (4) -- (13);
\draw (16) -- (10);
\draw (11) -- (5);
\draw (5) -- (18);
\draw (17) -- (2);
\draw (18) -- (16);
\draw (10) -- (9);
\draw (2) -- (16);
\draw (17) -- (19);
\draw (11) -- (4);
\draw (14) -- (19);
\draw (5) -- (17);
\draw (12) -- (7);
\draw (17) -- (1);
\draw (18) -- (6);
\draw (3) -- (19);
\draw (7) -- (9);
\draw (14) -- (16);
\draw (13) -- (14);
\draw (13) -- (3);
\draw (4) -- (17);
\draw (5) -- (14);
\draw (2) -- (0);
\draw (15) -- (6);
\draw (5) -- (15);
\draw (19) -- (12);
\draw (1) -- (7);
\end{tikzpicture} \\ 
\hline
$D$ & 
\begin{tikzpicture}[yscale=0.8, baseline=(current bounding box.center)]\ssmall
\node (7) at 	(2, 6) {$D_4$};
\node (10) at 	(3.25, 5) {$32A_3$};
\node (8) at 	(0.75, 5) {$32A_3$};
\node (0) at 	(2, 5) {$32A_3$};
\node (9) at 	(2, 4) {$512A_2$};
\node (5) at 	(1, 4) {$36A_1^4$};
\node (1) at 	(0.65, 3) {$288A_1^3$};
\node (11) at 	(0, 2) {$288A_1^2$};
\node (6) at 	(0.8, 2) {$288A_1^2$};
\node (2) at 	(4, 2) {$288A_1^2$};
\node (3) at 	(2, 1) {$1152A_1$};
\node (4) at 	(2, 0) {$192$};
\draw (11) -- (8);
\draw (9) -- (0);
\draw (3) -- (2);
\draw (8) -- (7);
\draw (10) -- (7);
\draw (9) -- (8);
\draw (5) -- (7);
\draw (3) -- (11);
\draw (2) -- (1);
\draw (6) -- (0);
\draw (9) -- (10);
\draw (1) -- (5);
\draw (11) -- (1);
\draw (0) -- (7);
\draw (3) -- (6);
\draw (4) -- (3);
\draw (3) -- (9);
\draw (2) -- (10);
\draw (6) -- (1);
\end{tikzpicture} \\
\hline
$F$ &
\begin{tikzpicture}[yscale=0.6, baseline=(current bounding box.center)]\ssmall
\node (14) at (4.5, 15) {$F_4$};
\node (19) at (1.75, 14) {$9B_4$};
\node (21) at (0.5, 13) {$6D_4$};
\node (16) at (4.5, 12) {$144A_1C_3$};
\node (12) at (1.5, 10.75) {$288B_3$};
\node (1) at (4.5, 10.75) {$288C_3$};
\node (4) at (3.3, 10) {$288A_1A_3$};
\node (17) at (0.7, 9) {$576A_3$};
\node (10) at (3.25, 8) {$648A_1^2B_2$};
\node (9) at (6.25, 8) {$512A_2^2$};
\node (11) at (3.75, 7) {$2592A_1B_2$};
\node (20) at (3.4, 6) {$2592B_2$};
\node (5) at (5, 5) {$4608A_1A_2$};
\node (3) at (6.6, 5) {$4608A_1A_2$};
\node (22) at (6.25, 4) {$3072A_2$};
\node (18) at (2.9, 4) {$3072A_2$};
\node (13) at (-0.25, 4) {$216A_1^4$};
\node (15) at (0, 3) {$1728A_1^3$};
\node (0) at (1, 3) {$5184A_1^3$};
\node (8) at (4, 2) {$20736A_1^2$};
\node (6) at (0.75, 2) {$5184A_1^2$};
\node (23) at (6, 1) {$6912A_1$};
\node (7) at (2, 1) {$6912A_1$};
\node (2) at (4, 0) {$1152$};
\draw (5) -- (9);
\draw (15) -- (13);
\draw (23) -- (22);
\draw (8) -- (0);
\draw (23) -- (8);
\draw (0) -- (10);
\draw (16) -- (14);
\draw (6) -- (17);
\draw (8) -- (5);
\draw (13) -- (21);
\draw (11) -- (10);
\draw (20) -- (12);
\draw (20) -- (11);
\draw (4) -- (19);
\draw (17) -- (21);
\draw (1) -- (16);
\draw (19) -- (14);
\draw (2) -- (23);
\draw (15) -- (11);
\draw (11) -- (1);
\draw (0) -- (4);
\draw (3) -- (16);
\draw (6) -- (15);
\draw (21) -- (19);
\draw (22) -- (3);
\draw (5) -- (4);
\draw (9) -- (14);
\draw (8) -- (11);
\draw (10) -- (19);
\draw (6) -- (0);
\draw (7) -- (18);
\draw (0) -- (12);
\draw (23) -- (20);
\draw (18) -- (5);
\draw (3) -- (9);
\draw (18) -- (17);
\draw (13) -- (10);
\draw (17) -- (4);
\draw (2) -- (7);
\draw (8) -- (3);
\draw (10) -- (16);
\draw (12) -- (19);
\draw (6) -- (20);
\draw (17) -- (12);
\draw (22) -- (1);
\draw (7) -- (8);
\draw (7) -- (6);
\end{tikzpicture} \\ 
\hline
$G$ & \begin{tikzpicture}[yscale=.75,baseline=(current bounding box.center)]\ssmall
\node (0) at (1, 4) {$G_2$};
\node (3) at (2, 3) {$2A_2$};
\node (1) at (0, 2) {$9A_1^2$};
\node (5) at (2, 1) {$18A_1$};
\node (4) at (0, 1) {$18A_1$};
\node (2) at (1, 0) {$12$};
\draw (3) -- (0);
\draw (2) -- (4);
\draw (5) -- (1);
\draw (2) -- (5);
\draw (4) -- (1);
\draw (1) -- (0);
\draw (5) -- (3);
\end{tikzpicture} \\ 
\hline
\end{longtable}

\end{document}